\title{The reverse mathematics of wqos and bqos}
\author{Alberto Marcone}
   \address{Dipartimento di Scienze Matematiche, Informatiche e Fisiche,
   Universit\`{a} di Udine,
   33100 Udine,
   Italy}
\email{alberto.marcone@uniud.it}
\thanks{I thank Marta Fiori Carones and Emanuele Frittaion who carefully read an earlier draft of this paper.\\
Research partially supported by PRIN 2012 Grant \lq\lq Logica, Modelli e Insiemi\rq\rq.}
\subjclass[2010]{Primary: 03B30; Secondary: 03F35, 06A07}
\newcommand{\set}[2]{\left\{\,{#1} \,:\, {#2}\,\right\}}
\newcommand{\rr}[1]{\textit{({#1})}}
\newcommand{\N}{\ensuremath{\mathbb{N}}}
\newcommand{\conc}{{{}^\smallfrown}}
\newcommand{\Sq}[1]{\left[{#1}\right]^{\N}}
\newcommand{\sq}[1]{\left[{#1}\right]^{<\N}}
\newcommand{\base}{\operatorname{base}}
\newcommand{\Pow}[1]{\mathcal{P} (#1)}
\newcommand{\Pf}[1]{\mathcal{P}_{\mathrm{f}} (#1)}
\newcommand{\init}{\sqsubset}
\newcommand{\initeq}{\sqsubseteq}
\newcommand{\tri}{\vartriangleleft}
\newcommand{\ntri}{\ntriangleleft}
\newcommand{\lh}{\operatorname{lh}}
\newcommand{\Seq}{\ensuremath{\N^{<\N}}}
\newcommand{\Qom}{Q^{<\N}}
\newcommand{\Qt}{\tilde Q}
\newcommand{\LL}{\ensuremath{\mathcal{L}_2}}
\newcommand{\PI}[2]{\ensuremath{\boldsymbol\Pi^{#1}_{#2}}}
\newcommand{\SI}[2]{\ensuremath{\boldsymbol\Sigma^{#1}_{#2}}}
\newcommand{\DE}[2]{\ensuremath{\boldsymbol\Delta^{#1}_{#2}}}
\newcommand{\RCA}{{\ensuremath{\mathsf{RCA}_0}}}
\newcommand{\WKL}{{\ensuremath{\mathsf{WKL}_0}}}
\newcommand{\ACA}{{\ensuremath{\mathsf{ACA}_0}}}
\newcommand{\ACApl}{{\ensuremath{\mathsf{ACA}_0^+}}}
\newcommand{\ACApr}{{\ensuremath{\mathsf{ACA}_0'}}}
\newcommand{\ATR}{{\ensuremath{\mathsf{ATR}_0}}}
\newcommand{\PCA}{\PI11-{\ensuremath{\mathsf{CA}_0}}}
\newcommand{\PPCA}{\PI12-{\ensuremath{\mathsf{CA}_0}}}
\newcommand{\RT}[2]{\ensuremath{\mathsf{RT}^{#1}_{#2}}}
\newcommand{\ADS}{\ensuremath{\mathsf{ADS}}}
\newcommand{\CAC}{\ensuremath{\mathsf{CAC}}}
\newcommand{\Bad}{\operatorname{Bad}}
\newcommand{\Desc}{\operatorname{Desc}}
\newcommand{\REC}{\ensuremath{\mathbf{REC}}}
\newcommand{\du}{\mathbin{\overset{\cdot}{\smash\cup}}}
\theoremstyle{plain}
\newtheorem{theorem}{Theorem}[section]
\newtheorem{lemma}[theorem]{Lemma}
\newtheorem{fact}[theorem]{Fact}
\newtheorem{question}[theorem]{Question}
\theoremstyle{definition}
\newtheorem{definition}[theorem]{Definition}
\declaretheoremstyle[notebraces = {\bf (}{)}]{stat}
\declaretheorem[numberlike=theorem, name=Statement, style=stat]{stat}
\begin{document}

\maketitle

\begin{abstract}
In this paper we survey wqo and bqo theory from the reverse mathematics
perspective. We consider both elementary results (such as the equivalence
of different definitions of the concepts, and basic closure properties) and
more advanced theorems. The classification from the reverse mathematics
viewpoint of both kinds of results provides interesting challenges, and we
cover also recent advances on some long standing open problems.
\end{abstract}

\tableofcontents\bigskip



This paper is an update of \cite{wqobqo}, which was written in 2000 and
documented the state of the research about the reverse mathematics of
statements dealing with wqos and bqos at the turn of the century. Since then,
new work on the subject has been carried out and we describe it here. We
however include also the results already covered by \cite{wqobqo}, attempting
to cover exhaustively the topic. We also highlight some open problems in the
area.

In Section \ref{sec:rm} we give a brief introduction to reverse mathematics
for the reader whose interest in wqos and bqos originates elsewhere. The
readers familiar with this research program can safely skip this section. In
Section \ref{sec:charwqo} we compare different characterizations of wqos and
study their closure under basic operations, such as subset, product and
intersection. Here even seemingly trivial properties provide interesting
challenges for the reverse mathematician. The study of characterizations and
closure under simple operations is repeated in Section \ref{sec:charbqo} for
bqos: the strength of some statements go all the way up to \ATR\ and
apparently simple statements such as \lq\lq$3$ is bqo\rq\rq\ have escaped
classification so far. In Section \ref{sec:mb} we consider the minimality
arguments which are one of the main proof techniques of the subject. Section
\ref{sec:struct} looks at structural results, such as the theorem by de Jongh
and Parikh asserting the existence of a maximal linear extension of a wqo.
Section \ref{sec:theorems} deals with what we might call the major results of
wqo and bqo theory, such as Higman's, Kruskal's and Nash-Williams' theorems,
the minor graph theorem and Fra\"{\i}ss\'{e}'s conjecture. We end the paper with a
section dealing with results about a topological version of wqos.

\section{Reverse mathematics}\label{sec:rm}
Reverse mathematics is a wide ranging research program in the foundations of
mathematics. The main goal of the program is to give mathematical support to
statements such as \lq\lq Theorem $A$ is stronger than Theorem $B$\rq\rq\ or
\lq\lq Theorems $C$ and $D$ are equivalent\rq\rq. If taken literally the
first statement does not make sense: since $A$ and $B$ are both true, they
are logically equivalent. By the same token, the second statement is
trivially true, and thus carries no useful information. However a
clarification of these statements is possible by finding out precisely the
minimal axioms needed to prove $B$ and showing that they do not suffice to
prove $A$, and by showing that these minimal axioms coincide for $C$ and $D$.
We are thus interested in proving equivalences between theorems and axioms,
yielding equivalences and nonequivalences between different theorems, over a
weak base theory.

Although we can label \lq\lq reverse mathematics\rq\rq\ any study of this
kind (including the study of different forms of the axiom of choice over the
base theory $\mathsf{ZF}$), the term is usually restricted to the setting of
subsystems of second order arithmetic. The language \LL\ of second order
arithmetic has variables for natural numbers and variables for sets of
natural numbers, constant symbols $0$ and $1$, binary function symbols for
addition and product of natural numbers, symbols for equality and the order
relation on the natural numbers and for membership between a natural number
and a set. A model for \LL\ consists of a first order part (an interpretation
for the natural numbers $\mathfrak{N}$ equipped with $+$, $\cdot$ and $\leq$)
and a second order part consisting of a collection of subsets of
$\mathfrak{N}$. When the first order part is standard we speak of an
$\omega$-model and we can identify the model with the subset of
$\mathcal{P}(\omega)$ that constitutes its second order part.

Second order arithmetic is the \LL-theory with classical logic consisting of
the axioms stating that the natural numbers are a commutative ordered
semiring with identity, the induction scheme for arbitrary formulas, and the
comprehension scheme for sets of natural numbers defined by arbitrary
formulas.

Hermann Weyl \cite{Weyl} and Hilbert and Bernays \cite{HB1,HB2} already
noticed in their work on the foundations of mathematics that \LL\ is rich
enough to express, using appropriate codings, significant parts of
mathematical practice, and that many mathematical theorems are provable in
(fragments of) second order arithmetic. Actually Weyl used a theory similar
to what we now denote by \ACApl\ (a slight strengthening of \ACA, to be
described below). Recently Dean and Walsh \cite{prehistory} traced the
history of subsystems of second order arithmetic leading to \cite{Frie},
where Harvey Friedman started the systematic search for the axioms that are
sufficient and necessary to prove theorems of ordinary, not set-theoretic,
mathematics. One of Friedman's main early discoveries was that (in his words)
\lq\lq When the theorem is proved from the right axioms, the axioms can be
proved from the theorem\rq\rq. Friedman also highlighted the role of
set-existence axioms, and this soon led to restricting the induction
principles allowed in the various systems. The base system \RCA\ and the now
well-known \WKL, \ACA, \ATR, and \PCA, were introduced in \cite{Frirestr}.
Today, most of reverse mathematics research compares the strength of
mathematical theorems by establishing equivalences, implications and
nonimplications over \RCA.

To describe \RCA\ and the other systems used in reverse mathematics let us
also recall that formulas of \LL\ are classified in the usual hierarchies:
those with no set quantifiers and only bounded number quantifiers are \DE00,
while counting the number of alternating unbounded number quantifiers we
obtain the classification of all arithmetical (= without set quantifiers)
formulas as \SI0n and \PI0n formulas (one uses \SI{}{} or \PI{}{} depending
on the type of the first quantifier in the formula, existential in the
former, universal in the latter). Formulas with set quantifiers in front of
an arithmetical formula are classified by counting their alternations as
\SI1n and \PI1n. A formula is \DE in in a given theory if it is equivalent in
that theory both to a \SI in formula and to a \PI in formula.

In \RCA\ the induction scheme and the comprehension scheme of second order
arithmetic are restricted respectively to \SI01 and \DE01 formulas. \RCA\ is
strong enough to prove some basic results about many mathematical structures,
but too weak for many others. The $\omega$-models of \RCA\ are the Turing
ideals: subsets of $\mathcal{P}(\omega)$ closed under join and Turing
reducibility. The minimal $\omega$-model of \RCA\ consists of the computable
sets and is usually denoted by \REC.

If a theorem $T$ is expressible in \LL\ but unprovable in \RCA, the reverse
mathematician asks the question: what is the weakest axiom we can add to
\RCA\ to obtain a theory that proves $T$? In principle, we could expect that
this question has a different answer for each $T$, but already Friedman
noticed that this is not the case. In fact, most theorems of ordinary
mathematics expressible in \LL\ are either provable in \RCA\ or equivalent
over \RCA\ to one of the following four subsystems of second order
arithmetic, listed in order of increasing strength: \WKL, \ACA, \ATR, and
\PCA. This is witnessed in Steve Simpson's monograph \cite{sosoa} and
summarized by the \emph{Big Five} terminology. We thus obtain a neat picture
where theorems belonging to quite different areas of mathematics are
classified in five levels, roughly corresponding to the mathematical
principles used in their proofs. \RCA\ corresponds to \lq\lq computable
mathematics\rq\rq, \WKL\ embodies a compactness principle, \ACA\ is linked to
sequential compactness, \ATR\ allows for transfinite arguments, \PCA\
includes impredicative principles.

To obtain \WKL\ we add to \RCA\ the statement of Weak K\"{o}nig's Lemma, i.e.,
every infinite binary tree has a path, which is essentially the compactness
of Cantor space. An equivalent statement, intuitively showing that \WKL\ is
stronger than \RCA\ (a rigorous proof needs simple arguments from model
theory and computability theory), is \SI01-separation: if $\varphi(n)$ and
$\psi(n)$ are \SI01-formulas such that $\forall n\, \neg (\varphi(n) \land
\psi(n))$ then there exists a set $X$ such that $\varphi(n) \implies n \in X$
and $\psi(n) \implies n \notin X$ for all $n$. \WKL\ and \RCA\ have the same
consistency strength of Primitive Recursive Arithmetic, and are thus
proof-theoretically fairly weak. Nevertheless, \WKL\ proves (and often turns
out to be equivalent to) a substantial amount of classical mathematical
theorems, including many results about real-valued functions and countable
rings and fields, basic Banach space facts, etc. The $\omega$-models of \WKL\
are the Scott ideals, and their intersection consists of the computable sets.

\ACA\ is obtained from \RCA\ by extending the comprehension scheme to all
arithmetical formulas. The statements without set variables provable in \ACA\
coincide exactly with the theorems of Peano Arithmetic, so that in particular
the consistency strength of the two theories is the same. Within \ACA\ one
can develop a fairly extensive theory of continuous functions, using the
completeness of the real line as an important tool. \ACA\ proves (and often
turns out to be equivalent to) also many basic theorems about countable
fields, rings, and vector spaces. For example, \ACA\ is equivalent, over
\RCA, to the Bolzano-Weierstrass theorem on the real line. The
$\omega$-models of \ACA\ are the Turing ideals closed under jumps, so that
the minimal $\omega$-model of \ACA\ consists of all arithmetical sets.

\ATR\ is the strengthening of \RCA\ (and \ACA) obtained by allowing to
iterate arithmetical comprehension along any well-order. It can be shown
\cite[Theorem V.5.1]{sosoa} that, over \RCA, \ATR\ is equivalent to
\SI11-separation, which is exactly as \SI01-separation but with \SI11
formulas allowed. This is a theory at the outer limits of predicativism and
proves (and often turns out to be equivalent to) many basic statements of
descriptive set theory but also some results from advanced algebra, such as
Ulm's theorem.

\PCA\ is the strongest of the big five systems, and is obtained from \RCA\ by
extending the comprehension scheme to \PI11 formulas. Also this axiom scheme
is equivalent to many results, including some from descriptive set theory,
Banach space theory and advanced algebra, such as the structure theorem for
countable Abelian groups.\smallskip

In recent years there has been a change in the reverse mathematics main
focus: following Seetapun's breakthrough result that Ramsey theorem for pairs
is not equivalent to any of the Big Five systems \cite{See}, a plethora of
statements, mostly in countable combinatorics, have been shown to form a rich
and complex web of implications and nonimplications. The first paper
featuring complex and non-linear diagrams representing the relationships
between statements of second order arithmetics appears to be \cite{HS}.
Nowadays diagrams of this kind are a common feature of reverse mathematics
papers. This leads to the zoo of reverse mathematics, a terminology coined by
Damir Dzhafarov when he designed \lq\lq a program to help organize relations
among various mathematical principles, particularly those that fail to be
equivalent to any of the big five subsystems of second-order
arithmetic\rq\rq. Hirschfeldt's monograph \cite{slicing} highlights this new
focus of the reverse mathematics program.

Many elements of the zoo are connected to Ramsey theorem. By \RT k\ell\ we
denote Ramsey theorem for sets of size $k$ and $\ell$ colors: for every
coloring $c: [\N]^k \to \ell$ (here $[X]^k$ is the set of all subsets of $X$
with exactly $k$ elements, and $\ell$ is the set $\{0, , \dots, \ell-1\}$)
there exists an infinite homogenous set $H$, i.e., such that for some
$i<\ell$ we have $c(s)=i$ for every $s \in [H]^k$. \RT k{<\infty} is $\forall
\ell\, \RT k\ell$. A classic result is that \RT k\ell\ is equivalent to \ACA\
over \RCA\ when $k \geq 3$ and $\ell \geq 2$ (see \cite[\S III.7]{sosoa}). On
the other hand, building on Seetapun's result with the essential new step
provided by Liu \cite{Liu}, we now know that \RT22 and \RT2{\infty} are both
incomparable with \WKL\ (see \cite[\S6.2 and Appendix]{slicing}). For any
fixed $\ell$ the infinite pigeonhole principle for $\ell$ colors \RT1\ell\ is
provable in \RCA. On the other hand the full infinite pigeonhole principle
\RT1{<\infty} is not provable in \RCA\ and not even in \WKL; in fact it is
equivalent over \RCA\ to the principle known as \SI02-bounding, which is
intermediate in strength between \SI01-induction and \SI02-induction.

Two of the earliest examples of the zoo phenomenon play a significant role
with respect to statements dealing with wqos. Both statements are fairly
simple consequences of \RT22. \CAC\ is the statement that any infinite
partial order contains either an infinite antichain or an infinite chain,
while \ADS\ asserts that every infinite linear order has either an infinite
ascending chain or an infinite descending chain. Hirschfeldt and Shore
\cite{HS} showed that \RT22 is properly stronger than \CAC, which in turn
implies \ADS. They also showed that none of these principles imply \WKL\ over
\RCA. The fact that \CAC\ is properly stronger than \ADS\ was first proved by
Lerman, Solomon, and Towsner \cite{LST}, and then given a simpler proof by
Patey \cite{Patey2016}. These results support the idea that \RT22, in
contrast to the big five, is not robust (Montalb\'{a}n \cite{Mont} informally
defined a theory to be robust \lq\lq if it is equivalent to small
perturbations of itself\rq\rq).\smallskip

Wqo and bqo theory represents an area of combinatorics which has always
interested logicians. From the viewpoint of reverse mathematics, one of the
reasons for this interest stems from the fact that some important results
about wqos and bqos appear to use axioms that are within the realm of second
order arithmetic, yet are much stronger than those necessary to develop other
areas of ordinary mathematics (as defined in the introduction of
\cite{sosoa}). We will see that results about wqo and bqo belong to both
facets of reverse mathematics: some statements fit neatly in the big five
picture, while some others provide examples of the zoo.

When dealing with wqo and bqo theory, at first sight the limitations of the
expressive power of second-order arithmetic compel us to consider only
quasi-orders defined on countable sets. This is actually not a big
restriction because a quasi-order is wqo (resp.\ bqo) if and only if each of
its restrictions to a countable subset of its domain is wqo (resp.\ bqo). The
limitation mentioned above must be adhered to when we quantify over the
collection of all wqos (or bqos), typically in statements of the form \lq\lq
for every wqo \dots \rq\rq. However we can also consider specific
quasi-orders defined on uncountable sets (such as the powerset of a countable
set, the collection of infinite sequences of elements of a countable set, or
the set of all countable linear orders); statements about these (with a fixed
quasi-order) being wqo or bqo can be expressed in a natural way in
second-order arithmetic (see Definition \ref{uncountable} below).\smallskip

We often use $\leq_\N$ for the order relation given by the symbol $\leq$ in
the language of second order arithmetic. This notation helps to emphasize
when we are comparing elements of a quasi-order via the quasi-order relation
and when we are comparing them via the underlying structure of arithmetic. We
use this notation when the distinction between these orders is not
immediately clear from the context.

As usual in the reverse mathematics literature, whenever we begin a
definition or statement with the name of a subsystem of second order
arithmetic in parenthesis we mean that the definition is given, or the
statement proved, within that subsystem.

\section{Characterizations and basic properties of wqos}\label{sec:charwqo}

\begin{definition}[\RCA]
A \emph{quasi-order} is a pair $(Q, {\preceq})$ such that $Q$ is a set and
$\preceq$ is a transitive reflexive relation on $Q$.
\end{definition}

When there is no danger of confusion we assume that $Q$ is always equipped
with the quasi-order $\preceq$ and that $\preceq$ is always a quasi-order on
the set $Q$. Thus in our statements we often mention only $\preceq$ or only
$Q$.

Partial orders are natural examples of quasi-orders: a partial order is a
quasi-order which also satisfies antisymmetry. We can transform a quasi-order
$Q$ into a partial order using the equivalence relation defined by $x \sim y$
if and only if $x \preceq y$ and $y \preceq x$. The quotient structure
$Q/\!{\sim}$ is naturally equipped with a partial order which can be formed
using $\mathbf{\Delta}^0_1$ comprehension in \RCA\ (it suffices to identify
an equivalence class with its least member with respect to $\leq_\N$).

Much of the standard terminology and notation for partial orders is used also
when dealing with quasi-orders. For example, we write $x \perp y$ to indicate
that $x$ and $y$ are incomparable under $\preceq$ and we write $x \prec y$ if
$x \preceq y$ and $y \npreceq x$.

\begin{definition}[\RCA]
A set $A \subseteq Q$ is an \emph{antichain} if $x \perp y$ for all $x \neq y
\in A$. A set $C \subseteq Q$ is a \emph{chain} if $x \preceq y$ or $y
\preceq x$ for all $x, y \in C$.

A set $I \subseteq Q$ is an \emph{initial interval} if $y \in I$ whenever $y
\preceq x$ for some $x \in I$. The definition of \emph{final interval} is
symmetric, with $x \preceq y$ for some $x \in I$.
\end{definition}


\begin{definition}[\RCA]
A quasi-order $(Q,\preceq)$ is \emph{linear} if $Q$ is a chain.

If $\preceq$ is a quasi-order on $Q$ and $\preceq_L$ is a linear quasi-order
on $Q$, then we say $\preceq_L$ is a \emph{linear extension} of $\preceq$ if
for all $x,y \in Q$, $x \preceq y$ implies $x \preceq_L y$ and $x \sim_L y$
implies $x \sim y$.
\end{definition}

Notice that (provably in \RCA) if $Q$ is a linear quasi-order then
$Q/\!{\sim}$ is a linear order. Moreover, if $\preceq_L$ is a linear
extension of $\preceq$ then $x \sim y$ if and only if $x \sim_L y$ and
therefore the linear extensions of a quasi-order $Q$ correspond exactly to
the linear extensions of the partial order $Q/\!{\sim}$.\smallskip

We can now give the official definition of wqo within \RCA.

\begin{definition}[\RCA]
Let $\preceq$ be a quasi-order on $Q$. $(Q,{\preceq})$ is \emph{wqo} if for
every map $f: \N \to Q$ there exist $m <_\N n$ such that $f(m) \preceq f(n)$.
\end{definition}

\begin{definition}[\RCA]
An infinite sequence of elements of $Q$ is a function $f: A \to Q$ where $A
\subseteq \N$ is infinite.

$f$ is \emph{ascending} if $f(n) \prec f(m)$ for all $n,m \in A$ with $n <_\N
m$. Similarly, $f$ is \emph{descending} if $f(m) \prec f(n)$ whenever $n,m
\in A$ are such that $n <_\N m$.

A \emph{well-order} is a linear quasi-order with no infinite descending
sequences.

We say that $f$ is a \emph{good sequence} (with respect to $\preceq$) if
there exist $m,n \in A$ such that $m <_\N n$ and $f(m) \preceq f(n)$; if this
does not happen we say that $f$ is \emph{bad}.
\end{definition}

The following characterization of wqo is immediate, and easy to prove within
\RCA\ using the existence of the enumeration of the elements of an infinite
subset of \N\ in increasing order:

\begin{fact}[\RCA]\label{charactwqo}
Let $(Q,{\preceq})$ be a quasi-order. The following are equivalent:
\begin{enumerate}[\quad (i)]
  \item $Q$ is wqo;
  \item every sequence of elements of $Q$ is good with respect to
      $\preceq$.
\end{enumerate}\end{fact}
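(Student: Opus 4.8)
The plan is to prove the equivalence of (i) and (ii) directly in \RCA, using only the basic fact that an infinite subset of \N\ can be enumerated in strictly increasing order.

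For the direction (ii) $\implies$ (i): suppose every infinite sequence of elements of $Q$ is good, and let $f \colon \N \to Q$ be arbitrary. Since $\N$ is itself an infinite subset of \N, $f$ is in particular an infinite sequence of elements of $Q$ in the sense of the second definition, so by (ii) it is good, meaning there exist $m <_\N n$ with $f(m) \preceq f(n)$. This is exactly what is required for $Q$ to be wqo, so this direction is essentially immediate and needs no comprehension beyond what is already available.

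For the direction (i) $\implies$ (ii): suppose $Q$ is wqo, and let $g \colon A \to Q$ be an infinite sequence, where $A \subseteq \N$ is infinite. Working in \RCA, form the increasing enumeration of $A$, i.e.\ the (unique) bijection $e \colon \N \to A$ with $e$ strictly increasing; the existence of $e$ is provable in \RCA\ and is the one ingredient flagged by the authors as needed. Then $g \circ e \colon \N \to Q$ is a map from \N\ to $Q$, so by (i) there are $m <_\N n$ with $(g \circ e)(m) \preceq (g \circ e)(n)$. Put $a = e(m)$ and $b = e(n)$; since $e$ is strictly increasing and $m <_\N n$ we get $a <_\N b$, and $a, b \in A$ with $g(a) \preceq g(b)$, so $g$ is good.

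There is no real obstacle here: the only point requiring care is the passage through the increasing enumeration $e$ of $A$, which must be justified in \RCA\ rather than taken for granted, and one should note that composition of functions and the verification that $e$ preserves order are unproblematic at the \RCA\ level. The whole argument is a routine bookkeeping exercise once $e$ is in hand.
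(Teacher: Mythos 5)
Your proof is correct and follows exactly the route the paper indicates: the direction from goodness of all sequences to wqo is the special case $A = \N$, and the converse is obtained by composing with the increasing enumeration of $A$, whose existence in \RCA\ is precisely the one ingredient the paper flags. Nothing further is needed.
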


Wqos can be characterized by several other statements about quasi-orders. The
systematic investigation of the axioms needed to prove the equivalences
between these characterizations was started by Cholak, Marcone, and Solomon
in \cite{defwqo}.

Let us begin with the characterizations which are provable in \RCA.

\begin{lemma}[\RCA]
Let $(Q,{\preceq})$ be a quasi-order. The following are equivalent:
\begin{enumerate}[\quad (i)]
  \item $Q$ is wqo;
  \item $Q$ has the finite basis property, i.e., for every $X \subseteq Q$
      there exists a finite $F \subseteq X$ such that $\forall x \in X\,
      \exists y \in F\, y \preceq x$;
  \item there is no infinite sequence of initial segments of $Q$ which is
      strictly decreasing with respect to inclusion;
  \item there is no infinite sequence of final segments of $Q$ which is
      strictly increasing with respect to inclusion.
\end{enumerate}\end{lemma}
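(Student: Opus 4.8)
The plan is to prove the four equivalences by a cycle of implications, together with the symmetry observation that turns (iii) into (iv). Concretely I would establish (i)$\Rightarrow$(ii)$\Rightarrow$(iii)$\Rightarrow$(i) and (i)$\Rightarrow$(iv)$\Rightarrow$(i), being careful throughout that every set constructed is obtained by $\SI01$-induction or $\DE01$-comprehension (or is primitive recursive in the data), so that the whole argument goes through in \RCA. The key point to check everywhere is not the mathematical content—each step is short and classical—but the \emph{definability} of the witnesses: antichains and descending sequences extracted below must be given by explicit arithmetical recursions, not by an appeal to a choice principle or to $\Pi^0_1$ facts.

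First, (i)$\Rightarrow$(ii). Arguing contrapositively, suppose $X\subseteq Q$ has no finite basis. I would build, by primitive recursion in $X$ and $\preceq$, a bad sequence $f\colon\N\to X$: having chosen $f(0),\dots,f(n)$, the finite set $\{f(0),\dots,f(n)\}$ is not a basis for $X$, so $\exists x\in X$ with $f(i)\npreceq x$ for all $i\le n$; let $f(n+1)$ be the $\leq_\N$-least such $x$. This $f$ is bad, contradicting (i) via Fact~\ref{charactwqo}. The search for $f(n+1)$ is $\SI01$ given the finitely many values already chosen, so $f$ exists in \RCA.

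Next, (ii)$\Rightarrow$(iii). Given a sequence $(I_k)_{k\in\N}$ of initial segments with $I_0\supsetneq I_1\supsetneq\cdots$, pick for each $k$ (again taking $\leq_\N$-least witnesses, so this is a genuine function) an element $x_k\in I_k\setminus I_{k+1}$. Let $X=\{x_k:k\in\N\}$. I claim $X$ has no finite basis: if $F\subseteq X$ is finite, say $F\subseteq\{x_0,\dots,x_n\}$, then no $y\in F$ satisfies $y\preceq x_{n+1}$, since $y\in I_0=\bigcup$ has some $y=x_j\in I_j\setminus I_{j+1}$ with $j\le n$, and $y\preceq x_{n+1}\in I_{n+1}\subseteq I_{j+1}$ would force $y\in I_{j+1}$ because $I_{j+1}$ is an initial segment—contradiction. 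This contradicts (ii). For (iii)$\Rightarrow$(i): given a bad sequence $f\colon\N\to Q$, set $I_k=\{y\in Q: \exists i>k\ y\preceq f(i)\}$; each $I_k$ is an initial segment, $I_{k+1}\subseteq I_k$, and $f(k+1)\in I_k\setminus I_{k+1}$ by badness of $f$, so $(I_k)$ is a strictly decreasing sequence of initial segments. The sets $I_k$ are uniformly $\SI01$ in $f$, hence the \emph{sequence} $(I_k)$ exists by $\SI01$ comprehension? —here is the one delicate spot: $I_k$ is $\SI01$, not $\DE01$, so I would instead phrase (iii) as referring to sequences of initial segments presented however they happen to be presented, or, more cleanly, replace $I_k$ by $\{y:\exists i\ (k<i<\text{(a bound)})\ y\preceq f(i)\}$ is not closed; the honest fix is to observe that the statement of (iii) quantifies over sequences $(I_k)$ given as a single set $\{(k,y):y\in I_k\}$, and a bad $f$ does not obviously produce such a set in \RCA.

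This is the main obstacle: the equivalence as literally stated may need the sequence of initial/final segments to be given by a set, and the natural segments attached to a bad sequence are only $\SI01$. I expect the intended reading (and the one I would adopt, matching \cite{defwqo}) is that (iii) and (iv) speak of \emph{arbitrary} such sequences without a uniformity requirement, so that (iii)$\Rightarrow$(i) is proved contrapositively by exhibiting \emph{some} strictly decreasing sequence of initial segments derived from a bad $f$—and in fact the truncated segments $I_k=\{y\in Q:\exists i\le g(k)\ (k<i\wedge y\preceq f(i))\}$ for a suitable strictly increasing $g$ will not be decreasing, whereas the $\SI01$ sets $I_k$ above \emph{are} a legitimate sequence once we allow the sequence to be coded by the $\SI01$ formula itself, i.e. once we read (iii) as a $\Pi^1_1$ statement rather than as quantification over a coded sequence. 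I would make this reading explicit in a sentence. Finally, (i)$\Leftrightarrow$(iv) follows from (i)$\Leftrightarrow$(iii) by the order-reversing duality $\preceq\mapsto\succeq$: initial segments of $(Q,\preceq)$ are exactly final segments of $(Q,\succeq)$, strict decrease becomes strict increase, and $(Q,\preceq)$ is wqo iff $(Q,\succeq)$ is—this last equivalence being itself immediate from the definition (a bad sequence for one is a bad sequence for the other). No new ideas are needed for (iv); it is purely formal once (iii) is done.
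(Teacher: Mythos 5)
Your cycle structure is reasonable, but three of the steps have concrete problems. In (iii)$\Rightarrow$(i) you correctly notice that your sets $I_k=\set{y\in Q}{\exists i>k\; y\preceq f(i)}$ are only $\SI01$, but your proposed escape --- rereading (iii) so that the sequence of initial segments need not be coded by a single set --- changes the statement and is unnecessary. The fix is to use complements of the cones above the $f(i)$ rather than unions of cones below them: set $I_n=\set{x\in Q}{\forall i\leq n\; f(i)\npreceq x}$. This is a bounded-quantifier formula in the parameters $f$ and $\preceq$, so the sequence $\set{(n,x)}{x\in I_n}$ exists by $\DE01$ comprehension; each $I_n$ is an initial segment, $I_{n+1}\subseteq I_n$, and badness plus reflexivity give $f(n+1)\in I_n\setminus I_{n+1}$. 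That is exactly the paper's argument. The same definability oversight occurs, this time unremarked, in your (ii)$\Rightarrow$(iii): the set $X=\set{x_k}{k\in\N}$ is the range of a (necessarily injective) function, and \RCA\ does not prove that such ranges exist --- for injective functions this existence is equivalent to \ACA. The repair is to thin $(x_k)$ to a subsequence that is strictly increasing with respect to $\leq_\N$ (possible since the $x_k$ are pairwise distinct), whose range is then $\DE01$; your basis argument survives on the subsequence.

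The more serious error is your derivation of (iv) by order reversal. Wqo is not invariant under $\preceq\mapsto\succeq$: $(\N,\leq_\N)$ is wqo while the identity map is a bad sequence for $(\N,\geq_\N)$, so \lq\lq a bad sequence for one is a bad sequence for the other\rq\rq\ is false. Moreover the reversal turns a strictly decreasing chain of initial segments of $\preceq$ into a strictly decreasing chain of final segments of $\succeq$, not an increasing one, so it does not even formally connect (iii) to (iv). The paper's argument is complementation within the \emph{same} quasi-order: $I$ is an initial segment of $(Q,\preceq)$ iff $Q\setminus I$ is a final segment of $(Q,\preceq)$, and a strictly decreasing sequence of the former is precisely a strictly increasing sequence of the latter, with $\DE01$ comprehension supplying the complements. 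Your direct primitive-recursive proof of (i)$\Rightarrow$(ii) is fine (the paper handles (i)$\Leftrightarrow$(ii) by citation to Simpson), but the converse direction of that equivalence in your cycle currently rests on the broken (ii)$\Rightarrow$(iii) step.
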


The equivalence between \rr{i} and \rr{ii} was already noticed by Simpson
(see \cite[Lemma 3.2]{Simpson88b}, where the finite basis property is stated
in terms of partial orders rather than quasi-orders: full details with the
current definition are provided in \cite[Lemma 4.8]{wqobqo}). The equivalence
between \rr{iii} and \rr{iv} is immediate by taking complements with respect
to $Q$. To show that \rr{i} implies \rr{iii} start from an infinite sequences
$\set{I_n}{n \in \N}$ of initial segments of $Q$ which is strictly decreasing
with respect to inclusion and for every $n$ let $f(n)$ be the $\leq_\N$
minimum element of $I_n \setminus I_{n+1}$: $f$ is a bad sequence. To prove
that \rr{iii} implies \rr{i} let $f$ be a bad sequence with domain \N\ and
set $I_n = \set{x \in Q}{\forall i \leq n\, f(i) \npreceq x}$: $\set{I_n}{n
\in \N}$ is an infinite strictly decreasing sequence of initial segments of
$Q$.

We now consider the characterizations of the notion of wqo which turn out to
be more interesting from the reverse mathematics viewpoint.

\begin{definition}[\RCA]
Let $(Q,{\preceq})$ be a quasi-order:
\begin{itemize}
\item $Q$ is \emph{wqo(set)} if for every $f:\mathbb{N} \rightarrow Q$
    there is an infinite set $A$ such that for all $n,m \in A$, $n <_\N m
    \rightarrow f(n) \preceq f(m)$;
\item $Q$ is \emph{wqo(anti)} if it has no infinite descending sequences
    and no infinite antichains;
\item $Q$ is \emph{wqo(ext)} if every linear extension of $\preceq$ is a
    well-order.
\end{itemize}
\end{definition}

\RCA\ proves quite easily some implications: every wqo(set) is wqo, and every
wqo is both wqo(anti) and wqo(ext). Cholak, Marcone, and Solomon showed that
all other implications between these notions are not true in the
$\omega$-model \REC, and hence are not provable within \RCA.

\begin{theorem}\label{defRCA}
The implications between the notions of wqo, wqo(set), wqo(anti) and wqo(ext)
which are provable in \RCA\ are exactly the ones in the transitive closure of
the diagram:
\[
\xymatrix{ & & wqo(anti)\\
wqo(set) \ar[r] & wqo \ar[ur] \ar[dr]\\
& & wqo(ext)}
\]
In fact the above diagram depicts the implications which hold in \REC, and
thus adding induction axioms to \RCA\ yields no other implications.
\end{theorem}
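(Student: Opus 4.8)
The plan is to establish the positive implications (those in the diagram) and then refute all the others by building suitable computable counterexamples that live in \REC.

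First I would verify the arrows shown. That \emph{wqo(set)} implies \emph{wqo} is immediate from the definitions (an infinite set $A$ as in \emph{wqo(set)} contains two elements $m<_\N n$ with $f(m)\preceq f(n)$). That \emph{wqo} implies \emph{wqo(anti)} follows because an infinite antichain, enumerated in increasing order, is a bad sequence, and a descending sequence is bad as well; these enumerations exist in \RCA. That \emph{wqo} implies \emph{wqo(ext)}: if $\preceq_L$ is a linear extension and $g:\N\to Q$ were a $\preceq_L$-descending sequence, then by \emph{wqo} there are $m<_\N n$ with $g(m)\preceq g(n)$, hence $g(m)\preceq_L g(n)$, contradicting strict $\preceq_L$-descent; here one must recall from the earlier discussion that $\sim$ and $\sim_L$ coincide, so $g(m)\prec_L g(n)$ really is excluded. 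These arguments are all carried out in \RCA, so the transitive closure of the diagram is provable.

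Next, to show nothing else is provable, it suffices (as the theorem asserts) to exhibit, for each non-arrow, a computable quasi-order in which the source property holds but the target fails; since \REC\ is an $\omega$-model of \RCA\ (indeed of \RCA\ plus full induction), this defeats any \RCA-proof (even with added induction). The implications to refute are: \emph{wqo} does not imply \emph{wqo(set)}; \emph{wqo(anti)} does not imply \emph{wqo} (and \emph{a fortiori} not \emph{wqo(set)} or \emph{wqo(ext)}); and \emph{wqo(ext)} does not imply \emph{wqo} (nor the others below/above it). One also checks that \emph{wqo(anti)} and \emph{wqo(ext)} do not imply each other, but these follow from the same counterexamples used for \emph{wqo(anti)}$\not\to$\emph{wqo} and \emph{wqo(ext)}$\not\to$\emph{wqo} provided the witnesses are chosen to simultaneously have or lack the relevant third property; I would arrange the examples with this in mind.

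The heart of the matter, and the main obstacle, is the construction of a \emph{wqo} quasi-order that is not \emph{wqo(set)}: every $f:\N\to Q$ must be good, yet for some computable $f$ there is no infinite \emph{ascending-with-respect-to-indices} set — equivalently, no computable one, and in fact none at all in \REC. The idea is to take a computable $f$ whose "good pairs" form a pattern with no infinite homogeneous-type subset that is computable; a natural approach is to code a computably inseparable pair, or a computable instance of $\RT22$ with no computable homogeneous set, into the structure of $\preceq$ so that any infinite $A$ as required by \emph{wqo(set)} would compute such a set. For the \emph{wqo(anti)}$\not\to$\emph{wqo} separation one wants a computable partial order with no infinite descending sequence and no infinite antichain, but with an infinite antichain or descending sequence appearing only non-computably — wait, it must literally have a bad sequence (to fail \emph{wqo}) while having no infinite antichain and no infinite descending sequence in \REC; here one exploits that "bad" only requires failure of a $\Sigma^0_1$ condition along one fixed sequence, whereas "has an infinite antichain" asserts the existence of an infinite set, so a computable bad sequence whose range has infinite antichains only among non-computable subsets does the job. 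The \emph{wqo(ext)}$\not\to$\emph{wqo} case is dual: a computable quasi-order with a computable bad sequence, all of whose linear extensions (equivalently, all linear extensions of $Q/\!{\sim}$) are well-orders — one arranges that the only way to linearly extend forces well-foundedness, e.g.\ by making the partial order a tree-like or forest-like structure in which incomparable elements are "small" in a way that every extension ranks them below a well-founded backbone. I expect verifying that these computable structures genuinely lack the stronger properties in \REC\ (and not merely computably) to require the most care, and I would either cite \cite{defwqo} for the detailed constructions or reproduce the cleanest one (the \emph{wqo}$\not\to$\emph{wqo(set)} example, which is the most surprising) in full.
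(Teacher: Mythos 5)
Your overall skeleton matches the paper's: verify the three arrows in \RCA\ (your arguments for these are fine, including the observation that $\sim$ and $\sim_L$ coincide), then defeat every missing implication by exhibiting computable counterexamples whose relevant properties hold or fail in the $\omega$-model \REC, arranging each witness to kill several implications at once. The paper likewise reduces everything to three constructions from \cite{defwqo} plus one classical fact. However, two of your counterexample sketches have concrete problems. First, for \emph{wqo(ext)}$\,\not\to\,$\emph{wqo} you propose a computable quasi-order with a computable bad sequence ``all of whose linear extensions are well-orders,'' enforced structurally (tree-like, incomparable elements ranked below a well-founded backbone). This cannot work as stated: if every linear extension of $Q$ is genuinely a well-order then $Q$ is genuinely wqo(ext), hence (classically, and already in \WKL) genuinely wqo, so it has no bad sequence at all. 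What is actually needed is an order that classically \emph{fails} wqo(ext) but such that no \emph{computable} linear extension has a \emph{computable} infinite descending sequence; this forces a diagonalization against all pairs (computable linear extension, computable descending sequence), and indeed the paper's witness is the finite injury construction of \cite[Theorem 3.21]{defwqo}, whose bad sequence is moreover an antichain (which is how one also gets \emph{wqo(ext)}$\,\not\to\,$\emph{wqo(anti)} for free). A purely structural argument cannot produce this.

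Second, for \emph{wqo}$\,\not\to\,$\emph{wqo(set)} — which you single out as the heart of the matter — the paper does not build anything new: it invokes the classical Denisov--Tennenbaum computable linear order of type $\omega+\omega^{\ast}$ with no computable infinite ascending or descending sequence. In \REC\ this order is wqo (a bad sequence in a linear order is a descending sequence), while any infinite set $A$ witnessing wqo(set) for the identity enumeration would be a computable ascending sequence. Your proposed coding of a computably inseparable pair or of an \RT22 instance is left entirely unspecified, and it is not clear how it would keep $Q$ wqo in \REC; the linear-order witness is both simpler and the one the literature actually uses. Your treatment of \emph{wqo(anti)}$\,\not\to\,$\emph{wqo} is conceptually correct (a computable bad sequence all of whose infinite antichains and descending subsequences are noncomputable), and deferring the finite injury details to \cite{defwqo} is exactly what the paper does; but as written your proposal would not assemble into a correct proof without repairing the wqo(ext) case and pinning down the wqo(set) witness.
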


To show that every wqo implies wqo(set) fails in \REC, it suffices to recall
a classical construction (due to Denisov and Tennenbaum independently: see
\cite{Downey}) of a computable linear order of order type $\omega +
\omega^{\ast}$ which does not have any infinite computable ascending or
descending sequences.

Similarly, showing that \REC\ does not satisfy that every wqo(ext) is wqo
means building a computable partial order $(Q,{\preceq})$ such that all its
computable linear extensions are computably well-ordered (i.e., do not have
infinite computable descending sequences) but there is a computable $f: \N
\to Q$ such that $f(m) \npreceq f(n)$ for all $m <_\N n$. In fact the partial
order constructed in \cite[Theorem 3.21]{defwqo} using a finite injury
construction is such that $f(m) \perp f(n)$ for all $m \neq n$, thus
obtaining the stronger result that \REC\ does not satisfy that every wqo(ext)
is wqo(anti).

To show that wqo(anti) implies wqo does not hold in \REC\ one needs to find a
computable partial order $(Q,{\preceq})$ with no computable infinite
antichains and no computable infinite descending sequences but such that
there exists a computable $f: \N \to Q$ such that $f(m) \npreceq f(n)$ for
all $m <_\N n$. The partial order built in \cite[Theorem 3.9]{defwqo} has the
additional property of having a computable linear extension with a computable
infinite descending sequence (see \cite[Corollary 3.10]{defwqo}). Hence \REC\
does not satisfy that every wqo(anti) is wqo(ext).

One can improve the latter construction obtaining even more information. In
fact, \cite[Theorem 3.11]{defwqo} shows that if $(X_i)_{i \in \N}$ is a
sequence of uniformly $\Delta^0_2$, uniformly low sets there exists a
computable partial order $(Q,{\preceq})$, such that for all $i$ no
$X_i$-computable function lists an infinite antichain or an infinite
descending sequence in $Q$, but there exists a computable $f: \N \to Q$ such
that $f(m) \npreceq f(n)$ for all $m <_\N n$. Since for an appropriate choice
of $(X_i)_{i \in \N}$ we have that the $\omega$-model $\set{Y}{\exists i (Y
\leq_T X_i)}$ satisfies \WKL, we obtain that \WKL\ does not prove that every
wqo(anti) is wqo.

Further exploring the provability of the other implications in \WKL, we
notice that it is fairly easy to prove in \RCA\ that the statement that every
wqo is wqo(set) implies \RT1{<\infty} (\cite[Lemma 3.20]{defwqo}), and hence
is not provable in \WKL.

On the other hand, \cite[Theorem 3.17]{defwqo} shows that \WKL\ proves (using
the fact, equivalent to \WKL, that every acyclic relation is contained in a
partial order) that every wqo(ext) is wqo. Putting the information mentioned
above together we obtain the following picture regarding provability in \WKL.

\begin{theorem}\label{defWKL}
The implications between the notions of wqo, wqo(set), wqo(anti) and wqo(ext)
which are provable in \WKL\ are exactly the ones in the transitive closure of
the diagram:
\[
\xymatrix{ & wqo \ar[dr] \ar@{<->}[dd]\\
wqo(set) \ar[ur] \ar[dr] & & wqo(anti)\\
& wqo(ext) \ar[ur]}
\]
\end{theorem}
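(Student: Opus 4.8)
The plan is to assemble the pieces already isolated in the discussion preceding the statement. There are two jobs: check that every arrow in the displayed diagram (hence everything in its transitive closure) is provable in \WKL, and, for each of the five implications \emph{not} in that transitive closure, exhibit a model of \WKL\ in which it fails. I count five: wqo $\to$ wqo(set), wqo(ext) $\to$ wqo(set), wqo(anti) $\to$ wqo(set), wqo(anti) $\to$ wqo, and wqo(anti) $\to$ wqo(ext).

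For the positive direction, the implications wqo(set) $\to$ wqo, wqo $\to$ wqo(ext) and wqo $\to$ wqo(anti) are the ones already noted to hold in \RCA\ (all three by contraposition: a bad sequence is produced, respectively, from an infinite subset of \N\ enumerated in increasing order, from an infinite $\preceq_L$-descending sequence witnessing that a linear extension is not a well-order, and from an infinite antichain or infinite descending sequence). The only substantial arrow is wqo(ext) $\to$ wqo, which is \cite[Theorem 3.17]{defwqo}; I would reprove it thus. Passing to $Q/\!{\sim}$ we may assume $(Q,\preceq)$ is a partial order. If $Q$ is not wqo, fix $f:\N\to Q$ with $f(m)\npreceq f(n)$ for all $m<_\N n$, and let $R$ be the relation obtained from $\preceq$ by adding all pairs $(f(n),f(m))$ with $m<_\N n$. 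The crux is that $R$ is acyclic: collapsing consecutive $\preceq$-steps, a cycle would consist of new edges $f(n_1)\to f(m_1),\dots,f(n_k)\to f(m_k)$ with $m_j<_\N n_j$, joined by $\preceq$-paths, so $f(m_j)\preceq f(n_{j+1})$ for each $j$ (indices mod $k$); by the choice of $f$ this forces $n_{j+1}\leq_\N m_j<_\N n_j$, whence $n_1>_\N n_2>_\N\dots>_\N n_k>_\N n_1$, a contradiction. Now \WKL, via the equivalent statement that every acyclic relation is contained in a partial order, together with a linearization, yields a linear extension $\preceq_L$ of $\preceq$ in which $f(0)\succ_L f(1)\succ_L\dots$; so $\preceq_L$ is not a well-order and $Q$ is not wqo(ext). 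The three remaining arrows of the diagram follow by transitivity.

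For the negative direction, recall from \cite[Lemma 3.20]{defwqo} that \lq\lq every wqo is wqo(set)\rq\rq\ implies $\RT1{<\infty}$ over \RCA: for the antichain $Q=\ell$ (provably a wqo in \RCA) an application of wqo(set) to a coloring $c:\N\to\ell$ produces an infinite $c$-homogeneous set. Since $\RT1{<\infty}$, being equivalent to \SI02-bounding, is not provable in \WKL, there is a model $M\models\WKL$ containing a $Q$ that is wqo but not wqo(set); as $M$ also satisfies wqo(ext) $\to$ wqo (just proved), this $Q$ is wqo(ext) and wqo(anti) as well, so $M$ refutes each of wqo $\to$ wqo(set), wqo(ext) $\to$ wqo(set) and wqo(anti) $\to$ wqo(set). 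For the last two missing implications I would use \cite[Theorem 3.11]{defwqo}: there is a sequence $(X_i)_{i\in\N}$ of uniformly $\Delta^0_2$, uniformly low sets and a computable partial order $Q$ carrying a computable bad sequence but no $X_i$-computable infinite antichain or infinite descending sequence; choosing $(X_i)_{i\in\N}$ so that $M'=\set{Y}{\exists i\,(Y\leq_T X_i)}$ satisfies \WKL, we get $M'\models\WKL$ with $Q\in M'$ being wqo(anti) but not wqo, and — since $M'$ again satisfies wqo(ext) $\to$ wqo — not wqo(ext) either. Hence $M'$ refutes wqo(anti) $\to$ wqo and wqo(anti) $\to$ wqo(ext), and all five missing implications are accounted for.

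The main obstacle is the implication wqo(ext) $\to$ wqo in \WKL: one must pin down that an instance of \WKL\ (acyclic relations extend to partial orders) is what is needed, and the acyclicity computation for the relation $R$ above is the heart of the matter. Everything else is bookkeeping, once one grants the two computability-theoretic constructions of \cite{defwqo} and the fact that \WKL\ does not prove \SI02-bounding.
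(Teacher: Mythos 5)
Your proposal is correct and follows essentially the same route as the paper: the positive arrows are the \RCA-provable implications plus \cite[Theorem 3.17]{defwqo} for wqo(ext) $\to$ wqo via the \WKL-equivalent extension of acyclic relations (your acyclicity computation is a faithful reconstruction of that cited argument), and the five missing implications are all refuted from the same two countermodels the paper invokes, namely a model of \WKL\ failing \RT1{<\infty} via \cite[Lemma 3.20]{defwqo} and the $\omega$-model of \WKL\ built from \cite[Theorem 3.11]{defwqo}. No gaps.
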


This leads to the following natural question, which has resisted any attempt
so far.

\begin{question}\label{q:WKL}
Consider the statements \lq\lq every wqo(ext) is wqo\rq\rq\ and \lq\lq every
wqo(ext) is wqo(anti)\rq\rq. Are they equivalent to \WKL\ over \RCA?
\end{question}

On the other hand, the statement \lq\lq every wqo(anti) is wqo(set)\rq\rq\
turns out to be equivalent to \CAC\ over \RCA\ (\cite[Lemma 3.3]{defwqo}). It
follows that the statements \lq\lq every wqo(anti) is wqo\rq\rq\ and \lq\lq
every wqo is wqo(set)\rq\rq\ are both provable from \CAC.

\begin{theorem}\label{defCAC}
$\RCA + \CAC$ proves the implications between the notions of wqo, wqo(set),
wqo(anti) and wqo(ext) which are in the transitive closure of the diagram:
\[
\xymatrix{ wqo(set) \ar@{<->}[r] \ar[dr] & wqo \ar@{<->}[r]
\ar[d] & wqo(anti) \ar[dl]\\
& wqo(ext)}
\]
\end{theorem}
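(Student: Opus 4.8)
The plan is to reduce the statement to the single new implication \lq\lq every wqo(anti) is wqo(set)\rq\rq\ over $\RCA + \CAC$, because every remaining arrow in the transitive closure of the displayed diagram is then obtained by chaining together implications already in hand. Recall that $\RCA$ alone proves wqo(set) $\to$ wqo, wqo $\to$ wqo(anti) and wqo $\to$ wqo(ext) (Theorem \ref{defRCA}); so once wqo(anti) $\to$ wqo(set) is available we get the cycle wqo(set) $\to$ wqo $\to$ wqo(anti) $\to$ wqo(set), making the three notions equivalent over $\RCA + \CAC$, and each of them implies wqo(ext) by composing with wqo $\to$ wqo(ext). Thus the whole content is the crux implication, which is precisely the forward direction of the equivalence \lq\lq every wqo(anti) is wqo(set) $\Leftrightarrow$ \CAC\rq\rq\ recalled above; I would prove it as follows.

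Let $(Q,\preceq)$ be wqo(anti) and let $f\colon \N \to Q$. First I would put the partial order $\leq^{*}$ on $\N$ defined by $n \leq^{*} m$ iff $n \leq_\N m$ and $f(n) \preceq f(m)$; transitivity is clear, and antisymmetry holds because $n \leq^{*} m \leq^{*} n$ forces $n \leq_\N m \leq_\N n$. Applying $\CAC$ to $(\N,\leq^{*})$, if there is an infinite $\leq^{*}$-chain $C$ then for $n <_\N m$ in $C$ comparability can only mean $n \leq^{*} m$, i.e.\ $f(n) \preceq f(m)$, so $C$ is exactly an infinite set witnessing wqo(set) for $f$. To rule out the other alternative, suppose $B = \{b_0 <_\N b_1 <_\N \cdots\}$ is an infinite $\leq^{*}$-antichain, so that $f(b_i) \npreceq f(b_j)$ whenever $i < j$ and hence $f\restriction B$ is bad. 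I would then apply $\CAC$ a second time, now to the \emph{reversed} partial order $\preceq''$ on $B$ given by $b_i \preceq'' b_j$ iff $i \geq j$ and $f(b_i) \preceq f(b_j)$. An infinite $\preceq''$-chain $D$ becomes, when listed in $\N$-increasing order, a sequence on which $f$ is $\preceq$-decreasing — strictly, since $f\restriction B$ is bad — i.e.\ an infinite descending sequence in $Q$. An infinite $\preceq''$-antichain $E$ yields $f(e) \perp f(e')$ for all distinct $e,e' \in E$ (one incomparability coming from $B$, the other from $E$), i.e.\ an infinite set of pairwise incomparable elements of $Q$. Either way wqo(anti) is contradicted, so the antichain alternative for $\leq^{*}$ cannot occur, and we are done.

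All the auxiliary relations are $\Delta^0_1$ in the given data, so $\leq^{*}$ and $\preceq''$ exist as sets in $\RCA$, and the argument uses no induction beyond $\RCA$. I expect the only non-routine point to be this double use of $\CAC$: one has to realise that the bad sequence produced by a $\leq^{*}$-antichain must be passed back to $\CAC$ through the reversed order, and then notice that its chain and antichain outcomes contradict the two distinct clauses of wqo(anti) (no infinite descending sequence, respectively no infinite antichain). Everything else — the three $\RCA$-provable arrows and the transitive-closure bookkeeping — is immediate, so no further obstacle is anticipated.
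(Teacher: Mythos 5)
Your reduction of the whole diagram to the single implication \lq\lq every wqo(anti) is wqo(set)\rq\rq\ is exactly the paper's route: it obtains that implication from its equivalence with \CAC\ proved in \cite[Lemma 3.3]{defwqo} and lets the \RCA-provable arrows of Theorem \ref{defRCA} supply the remaining arrows by transitivity. Your two-stage application of \CAC\ is a correct self-contained proof of the cited lemma; the only elided formalization point is that to turn the pairwise $\preceq$-incomparable values $f(e)$, $e \in E$, into an infinite antichain \emph{as a set} one should first thin $E$ so that $f$ is $\leq_\N$-increasing on it (routine in \RCA, since those values are pairwise distinct).
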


The diagram of Theorem \ref{defCAC} is different from the ones of Theorems
\ref{defRCA} and \ref{defWKL} in that it is unknown whether the missing
implications can be proved in $\RCA + \CAC$.

\begin{question}\label{q:CAC}
Does $\RCA + \CAC$ proves \lq\lq every wqo(ext) is wqo\rq\rq?
\end{question}

Notice that a positive answer to Question \ref{q:WKL} implies, since $\RCA +
\CAC$ does not prove \WKL, a negative answer to Question \ref{q:CAC}.\smallskip

\RCA\ easily proves that all well-orders and all finite quasi-orders are wqo
(indeed for the latter fact the finite pigeonhole principle suffices). By
Theorem \ref{defRCA} the same happens for wqo(anti) and wqo(ext). Regarding
wqo(set) we have that, using the appropriate \RT1\ell, for any specific
finite quasi-order \RCA\ proves that the quasi-order is wqo(set). On the
other hand, it is not difficult to see that, over \RCA, \lq\lq every finite
quasi-order is wqo(set)\rq\rq\ is equivalent to \RT1{<\infty}, while \lq\lq
every well-order is wqo(set)\rq\rq\ is equivalent to \ADS.\smallskip

Wqos enjoy several basic closure properties. The study of these from the
viewpoint of reverse mathematics was started in \cite{wqobqo} and
\cite{defwqo}.

We first consider the basic property of closure under taking subsets. The
proof of the following lemma is immediate.

\begin{lemma}[\RCA]\label{RCAsubset}
Let $\mathcal{P}$ be any of the properties wqo, wqo(anti) or wqo(set). If
$(Q,{\preceq})$ satisfies $\mathcal{P}$ and $R \subseteq Q$ then the
restriction of $\preceq$ to $R$ satisfies $\mathcal{P}$ as well.
\end{lemma}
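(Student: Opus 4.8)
The claim is that each of the properties wqo, wqo(anti) and wqo(set) is inherited by subsets, provably in \RCA. I would handle the three cases separately, as each is essentially a direct unfolding of the definition, but each requires a slightly different (and completely elementary) observation.

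For the case of wqo, suppose $(Q,\preceq)$ is wqo and $R\subseteq Q$. Given any $f\colon\N\to R$, compose with the inclusion to view $f$ as a map $\N\to Q$; applying the wqo property of $Q$ yields $m<_\N n$ with $f(m)\preceq f(n)$, and since $f(m),f(n)\in R$ this witnesses that the restriction of $\preceq$ to $R$ is wqo. No comprehension beyond what is needed to form the composition (which is immediate, since $R\subseteq Q$ means membership in $R$ is already a condition on elements of $Q$) is used, so this goes through in \RCA. The case of wqo(set) is identical, except that from $f\colon\N\to R$ regarded as a map into $Q$ we extract an infinite set $A$ such that $n<_\N m\to f(n)\preceq f(m)$ for all $n,m\in A$; the same $A$ works for $R$.

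For the case of wqo(anti), suppose $(Q,\preceq)$ is wqo(anti) and $R\subseteq Q$. If $R$ had an infinite descending sequence with respect to (the restriction of) $\preceq$, that very sequence would be an infinite descending sequence in $Q$, contradicting wqo(anti) of $Q$; likewise an infinite antichain in $R$ is an infinite antichain in $Q$, since incomparability is the same relation whether computed in $R$ or in $Q$. Hence $R$ has neither, i.e., $R$ is wqo(anti). Again this is purely a matter of noting that both \lq\lq descending sequence\rq\rq\ and \lq\lq antichain\rq\rq\ are defined by conditions on the $\preceq$-relation that do not change under restriction, so no set-existence axioms are invoked.

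There is essentially no obstacle here: the only point to be careful about is that wqo(ext) is deliberately omitted from the statement, and indeed it should be, since a linear extension of $\preceq{\restriction}R$ need not extend to a linear extension of $\preceq$ on all of $Q$ within \RCA\ (this is exactly the kind of phenomenon that makes wqo(ext) delicate, as reflected in Theorems \ref{defRCA}--\ref{defCAC}). So the main content of the lemma is simply the recognition that the first three properties are \lq\lq local\rq\rq\ — each asserts the nonexistence of a certain bad configuration, and any bad configuration inside $R$ is already a bad configuration inside $Q$ — whereas wqo(ext) is not of this form.
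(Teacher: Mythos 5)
Your proposal is correct and is exactly the paper's (unstated) argument: the paper simply declares the lemma immediate, and your case-by-case unfolding — any bad sequence, non-monotone pair, descending sequence, or antichain in $R$ is already one in $Q$ — is precisely that immediate proof, carried out with no set-existence axioms. Your closing remark about why wqo(ext) is excluded also matches the paper's subsequent discussion of the \WKL{} fact needed to extend a linear extension from $R$ to $Q$.
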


If $\mathcal{P}$ is wqo(ext) then the statement of Lemma \ref{RCAsubset} is
slightly more difficult to prove, since the obvious proof of the reversal is
based on the following fact: if $(Q, {\preceq})$ is a partial order, $R
\subseteq Q$, $\preceq_L$ is a linear extension of the restriction of
$\preceq$ to $R$, then there exists a linear extension of the whole $\preceq$
which extends also $\preceq_L$. \WKL\ suffices to prove this statement,
because we can consider ${\preceq} \cup {\preceq_L}$, which is an acyclic
relation, extend it to a partial order (here is the step using \WKL, see
\cite[Lemma 3.16]{defwqo}), and then to a linear order (\RCA\ suffices for
this last step).

\begin{question}\label{wqo(ext)subset}
Does \RCA\ suffice to prove that if $(Q,{\preceq})$ is wqo(ext) and $R
\subseteq Q$ then the restriction of $\preceq$ to $R$ is also wqo(ext)? Is
this implication equivalent to \WKL?
\end{question}

Let us now consider basic closure operations that involve two quasi-orders.

\begin{definition}[\RCA]\label{cartprod}
If $\preceq_1$ and $\preceq_2$ are quasi-orders on $Q_1$ and $Q_2$ we may
assume that $Q_1 \cap Q_2 = \emptyset$ (or replace each $Q_i$ by its
isomorphic copy on $Q_i \times \{i\}$). We can define the \emph{sum
quasi-order} and the \emph{disjoint union quasi-order} on $Q_1 \cup Q_2$
(denoted by $Q_1 + Q_2$ and by $Q_1 \du Q_2$ respectively) by
\begin{align*}
    x \preceq_+ y & \iff (x \in Q_1 \land y \in Q_2) \lor (x,y \in Q_1 \land x \preceq_1 y) \lor (x,y \in Q_2 \land x \preceq_2 y);\\
    x \preceq_{\du} y & \iff (x,y \in Q_1 \land x \preceq_1 y) \lor (x,y \in Q_2 \land x \preceq_2 y).
\end{align*}
The \emph{product quasi-order} on $Q_1 \times Q_2$ is defined by
\[
(x_1,x_2) \preceq_\times (y_1,y_2) \iff x_1 \preceq_1 y_1 \land x_2 \preceq_2 y_2.
\]
Moreover if $\preceq_1$ and $\preceq_2$ are quasi-orders on the same set $Q$
then the \emph{intersection quasi-order} on $Q$ is defined by
\[
x \preceq_{\cap} y \iff x \preceq_1 y \land x \preceq_2 y.
\]
\end{definition}

The following lemma follows easily from the provability in \RCA\ of \RT12.

\begin{lemma}[\RCA]\label{+duwqo}
Let $\mathcal{P}$ be any of the properties wqo, wqo(ext), wqo(anti) or
wqo(set). If $Q_1$ and $Q_2$ satisfy $\mathcal{P}$ then $Q_1 + Q_2$ and $Q_1
\du Q_2$ satisfy $\mathcal{P}$ with respect to the sum and disjoint union
quasi-orders.
\end{lemma}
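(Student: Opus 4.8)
The plan is to handle the four properties essentially in parallel, using that $\RT12$ (the finite pigeonhole principle for two colors, which says an infinite subset of $\N$ partitioned into two pieces has an infinite piece) is available in $\RCA$. For each of the four notions the strategy is the same: given a sequence (or a set, or an antichain, or a linear extension) witnessing a failure of $\mathcal{P}$ in $Q_1 + Q_2$ (or $Q_1 \du Q_2$), use $\RT12$ to pass to an infinite subsequence lying entirely in $Q_1$ or entirely in $Q_2$, and then invoke that $Q_1$ or $Q_2$ satisfies $\mathcal{P}$. The only subtle point is that the restriction of the sum or disjoint union quasi-order to $Q_i$ coincides with $\preceq_i$, which is immediate from Definition \ref{cartprod}, so the reduction to a single factor is legitimate.

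First I would treat $\mathcal{P} = $ wqo. Let $f\colon \N \to Q_1 \cup Q_2$ be arbitrary; color $n$ by $0$ if $f(n) \in Q_1$ and by $1$ if $f(n) \in Q_2$. By $\RT12$ there is an infinite $A \subseteq \N$ on which $f$ takes values in a single $Q_i$. Since $Q_i$ is wqo and hence (Fact \ref{charactwqo}) every sequence of its elements is good, there are $m <_\N n$ in $A$ with $f(m) \preceq_i f(n)$; as $\preceq_i$ agrees with both $\preceq_+$ and $\preceq_{\du}$ on $Q_i$, $f$ is good in the composite order. For wqo(set) the argument is identical except that, having passed to $A$ on which $f$ lands in $Q_i$, one applies wqo(set) for $Q_i$ to the restriction $f \restriction A$ (after re-enumerating $A$ in increasing order, which $\RCA$ provides) to extract the desired infinite homogeneous-in-the-$\preceq_i$-sense set inside $A$; this set works for the composite order as well. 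For wqo(anti): an infinite descending sequence or an infinite antichain $B$ in $Q_1 + Q_2$ or $Q_1 \du Q_2$ is colored by which factor each element belongs to; $\RT12$ gives an infinite monochromatic $B' \subseteq B$ inside some $Q_i$; since incomparability and strict descent in the composite order restrict to incomparability and strict descent in $\preceq_i$ (for the sum order one must note that elements of $Q_1$ and $Q_2$ are $\preceq_+$-comparable, so an infinite antichain can meet each factor in at most\ldots\ actually it is cleaner to just pass to $B'$ via $\RT12$ in all cases), $B'$ contradicts wqo(anti) for $Q_i$.

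The case $\mathcal{P} = $ wqo(ext) requires slightly more care, since a linear extension $\preceq_L$ of the composite order need not respect the $Q_1$/$Q_2$ split in any obvious way, so one cannot directly restrict it. The standard move is contrapositive via Fact \ref{charactwqo}-style reasoning combined with the $\RT12$ trick: suppose $\preceq_L$ is a linear extension of $\preceq_{\du}$ (the sum case is analogous, and in fact easier) with an infinite $\preceq_L$-descending sequence $f\colon \N \to Q_1 \cup Q_2$. Color $n$ by its factor; $\RT12$ yields an infinite $A$ on which $f$ lands in a single $Q_i$. Then $\preceq_L$ restricted to $Q_i$ is a linear extension of $\preceq_i$ (restriction of a linear extension to a subset is still a linear extension, using here that the underlying set-theoretic restriction behaves well — exactly the kind of fact flagged in Question \ref{wqo(ext)subset}, but in this direction it is unproblematic because we are restricting a given linear order to a subset, which $\RCA$ handles), and $f \restriction A$ is an infinite $\preceq_L$-descending sequence in it, contradicting wqo(ext) for $Q_i$. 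The main obstacle, such as it is, lies here: one must make sure that "the restriction of $\preceq_L$ to $Q_i$ is a linear extension of $\preceq_i$" is genuinely available in $\RCA$ — it is, since linearity and the extension conditions are all $\Pi^0_1$ or simpler and pass to subsets without any comprehension beyond $\Delta^0_1$ — and that the re-enumeration of $A$ needed to align domains with $\N$ is the innocuous one already used throughout. With that in hand all four cases close, completing the proof.
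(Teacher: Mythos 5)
Your proposal is correct and follows exactly the route the paper indicates: the paper's entire justification is the remark that the lemma ``follows easily from the provability in \RCA\ of \RT12'', and your argument is precisely that reduction (color by factor, extract an infinite monochromatic subdomain, and apply the hypothesis to the corresponding $Q_i$), including the correct observation that restricting a given linear extension to $Q_i$ is unproblematic in \RCA\ and is not the direction at issue in Question \ref{wqo(ext)subset}.
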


The next lemma was first noticed in \cite{wqobqo} for wqos, and then extended
to the other notions in \cite{defwqo}.

\begin{lemma}[\RCA]\label{cup=times}
Let $\mathcal{P}$ be any of the properties wqo, wqo(anti) or wqo(set). The
following are equivalent:
\begin{enumerate}[\quad (i)]
    \item if $Q$ satisfies $\mathcal{P}$ with respect to the quasi-orders
        $\preceq_1$ and $\preceq_2$ then $Q$ satisfies $\mathcal{P}$ with
        respect to the intersection quasi-order;
    \item if $Q_1$ and $Q_2$ satisfy $\mathcal{P}$ then $Q_1 \times Q_2$
        satisfies $\mathcal{P}$ with respect to the product quasi-order.
\end{enumerate}
\end{lemma}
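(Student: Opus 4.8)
The plan is to exploit the fact that the intersection quasi-order and the product quasi-order are, up to a definable bijection, the same construction viewed from two angles. First I would establish the easy direction, (ii) $\implies$ (i). Suppose $Q$ satisfies $\mathcal{P}$ with respect to both $\preceq_1$ and $\preceq_2$. Form the disjoint copies $Q \times \{1\}$ and $Q \times \{2\}$, so that $(Q \times \{1\}, \preceq_1)$ and $(Q \times \{2\}, \preceq_2)$ each satisfy $\mathcal{P}$ by Lemma \ref{RCAsubset} (or trivially, since they are isomorphic copies). By (ii) the product $(Q \times \{1\}) \times (Q \times \{2\})$ satisfies $\mathcal{P}$ with the product quasi-order. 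Now the diagonal $D = \set{((x,1),(x,2))}{x \in Q}$ is a subset of this product, hence by Lemma \ref{RCAsubset} again it satisfies $\mathcal{P}$; and the obvious map $x \mapsto ((x,1),(x,2))$ is an order isomorphism from $(Q, \preceq_{\cap})$ onto $D$ with the induced (product) order. Since $\mathcal{P}$ is clearly preserved by order isomorphism (each of wqo, wqo(anti), wqo(set) is stated purely in terms of the quasi-order relation, and the isomorphism can be applied to sequences and to the witnessing sets, using only $\DE01$ comprehension to transport an infinite set along a bijection), we conclude $(Q, \preceq_{\cap})$ satisfies $\mathcal{P}$.

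For the converse, (i) $\implies$ (ii), suppose $Q_1$ and $Q_2$ satisfy $\mathcal{P}$. On the set $Q_1 \times Q_2$ define two quasi-orders: $\preceq_1^*$ given by $(x_1,x_2) \preceq_1^* (y_1,y_2) \iff x_1 \preceq_1 y_1$, and $\preceq_2^*$ given by $(x_1,x_2) \preceq_2^* (y_1,y_2) \iff x_2 \preceq_2 y_2$. Then $\preceq_\times$ on $Q_1 \times Q_2$ is precisely the intersection $\preceq_1^* \cap \preceq_2^*$. So by (i) it suffices to show that $(Q_1 \times Q_2, \preceq_i^*)$ satisfies $\mathcal{P}$ for $i = 1, 2$; I will do $i = 1$, the other being symmetric. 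Here $\preceq_1^*$ is the pullback of $\preceq_1$ along the projection $\pi_1 \colon Q_1 \times Q_2 \to Q_1$. For $\mathcal{P} = $ wqo this is immediate: given $f \colon \N \to Q_1 \times Q_2$, the sequence $\pi_1 \circ f$ is good in $Q_1$, giving $m <_\N n$ with $\pi_1(f(m)) \preceq_1 \pi_1(f(n))$, i.e.\ $f(m) \preceq_1^* f(n)$. For $\mathcal{P} = $ wqo(set) the same argument yields the required infinite set $A$ directly from wqo(set) of $Q_1$. For $\mathcal{P} = $ wqo(anti): an infinite $\preceq_1^*$-descending sequence $f$ projects to an infinite $\preceq_1$-descending sequence (strictness is preserved since $(x_1,x_2) \prec_1^* (y_1,y_2)$ forces $x_1 \prec_1 y_1$), contradicting wqo(anti) of $Q_1$; an infinite $\preceq_1^*$-antichain $A \subseteq Q_1 \times Q_2$ has $\pi_1$ injective on $A$ (if two elements of $A$ shared a first coordinate they would be $\preceq_1^*$-comparable), so $\pi_1[A]$ is an infinite $\preceq_1$-antichain, again a contradiction — note that forming $\pi_1[A]$ needs only bounded comprehension.

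The main point to be careful about — and I expect it to be the only real obstacle — is precisely the bookkeeping that keeps every step inside \RCA: transporting an infinite set along the injection $x \mapsto ((x,1),(x,2))$ or along $\pi_1 \restriction A$, and verifying that each of the three properties is genuinely isomorphism- and pullback-invariant, uses nothing beyond $\DE01$ comprehension and $\SI01$ induction, so there is no subtlety of strength here; one just has to check that the transported objects are $\DE01$-definable from the data. It is worth remarking why wqo(ext) is absent from the list: the pullback construction above does not respect linear extensions — a linear extension of $\preceq_i^*$ on $Q_1 \times Q_2$ need not come from a linear extension of $\preceq_i$, because distinct pairs with the same $i$-th coordinate must be linearly ordered — so neither implication survives for wqo(ext), and indeed the corresponding statements there have a genuinely different reverse-mathematical status.
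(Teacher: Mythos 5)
Your proposal is correct and follows essentially the same route as the paper: the paper proves \rr{i} $\Rightarrow$ \rr{ii} by realizing the product quasi-order as the intersection of the two pullback quasi-orders on $Q_1 \times Q_2$, and \rr{ii} $\Rightarrow$ \rr{i} by viewing the intersection as a subset (your diagonal) of the product and invoking Lemma \ref{RCAsubset}. One small correction to your antichain step: $\pi_1[A] = \set{x_1}{\exists x_2\, (x_1,x_2) \in A}$ is \SI01, not bounded, so it is not directly available in \RCA; instead enumerate $A$, compose with $\pi_1$ to obtain an injective sequence of pairwise $\preceq_1$-incomparable elements of $Q_1$, and extract an infinite \DE01 subset of its range by the usual device of selecting values in increasing order of magnitude (a subset of an antichain being again an antichain).
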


The proof of \rr{i} implies \rr{ii} is based on the fact that products can be
realized as intersections and works for wqo(ext) as well. The proof of
\rr{ii} implies \rr{i} uses the fact that intersections can be viewed as
subsets of products, and thus employs Lemma \ref{RCAsubset}. In \cite{defwqo}
it is claimed that Lemma \ref{cup=times} holds also when $\mathcal{P}$ is
wqo(ext), but it seems that this might depend on the answer to Question
\ref{wqo(ext)subset}.

\begin{question}
Let $\mathcal{P}$ be wqo(ext). Does \RCA\ suffice to prove that \rr{ii} of
Lemma \ref{cup=times} implies \rr{i}? Is this implication equivalent to \WKL?
\end{question}

The following results are from \cite{defwqo}.

\begin{lemma}[\RCA]\label{wqo(set)prod}
Let $\mathcal{P}$ be any of the properties wqo, wqo(ext), wqo(anti) or
wqo(set).
\begin{itemize}
\item If $Q$ is wqo(set) with respect to the quasi-orders $\preceq_1$ and
    $\preceq_2$ then $Q$ satisfies $\mathcal{P}$ with respect to the
    intersection quasi-order;
\item if $Q_1$ and $Q_2$ are wqo(set) then $Q_1 \times Q_2$ satisfies
    $\mathcal{P}$ with respect to the product quasi-order.
\end{itemize}\end{lemma}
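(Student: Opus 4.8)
```latex
\textbf{Proof proposal.} The plan is to reduce the lemma to the
already-available facts about wqo(set), namely Lemma \ref{RCAsubset}
(wqo(set) is inherited by subsets) and the implication ``wqo(set)
$\Rightarrow$ wqo'' together with the downward implications
``wqo $\Rightarrow$ wqo(anti)'' and ``wqo $\Rightarrow$ wqo(ext)'' that
\RCA\ proves (Theorem \ref{defRCA}), plus Lemma \ref{+duwqo}. The two
bullets are two sides of the same coin, exactly as in Lemma
\ref{cup=times}: an intersection sits inside a product and a product is an
intersection of two quasi-orders, so it suffices to establish the second
bullet and then transfer. The crucial observation, which makes wqo(set)
behave better than plain wqo here, is that wqo(set) gives a \emph{single}
infinite set witnessing monotonicity, and such sets can be intersected.

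First I would prove: if $Q_1$ and $Q_2$ are wqo(set) then
$Q_1 \times Q_2$ is wqo(set) with respect to $\preceq_\times$. Given
$f : \N \to Q_1 \times Q_2$, write $f(n) = (f_1(n), f_2(n))$; this
splitting is available in \RCA. Apply wqo(set) of $Q_1$ to $f_1$ to get an
infinite $A_1$ with $f_1(n) \preceq_1 f_1(m)$ for all $n <_\N m$ in $A_1$.
Now apply wqo(set) of $Q_2$ to the restriction $f_2 \restriction A_1$
(formally, to $f_2 \circ e$ where $e$ enumerates $A_1$ increasingly, which
exists in \RCA) to obtain an infinite $A_2 \subseteq A_1$ with
$f_2(n) \preceq_2 f_2(m)$ for all $n <_\N m$ in $A_2$. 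Then $A_2$ witnesses
$f(n) \preceq_\times f(m)$ for all $n <_\N m$ in $A_2$, so $Q_1 \times Q_2$
is wqo(set). Since wqo(set) implies wqo, wqo(anti) and wqo(ext) in \RCA,
the second bullet follows for every $\mathcal{P}$ in the list.

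For the first bullet, suppose $Q$ is wqo(set) with respect to both
$\preceq_1$ and $\preceq_2$. Consider the two quasi-orders as living on
disjoint copies and form $Q_1 = (Q, \preceq_1)$, $Q_2 = (Q, \preceq_2)$;
by the second bullet $Q_1 \times Q_2$ is wqo(set), and
$\{(x,x) : x \in Q\}$ is (the domain of) a subset of it on which
$\preceq_\times$ restricts exactly to $\preceq_\cap$. By Lemma
\ref{RCAsubset} this diagonal is wqo(set), hence (again via the \RCA-provable
downward implications) satisfies $\mathcal{P}$. This is precisely the
``intersections are subsets of products'' argument used for Lemma
\ref{cup=times}, and the reason it now covers wqo(ext) as well is that we
are invoking Lemma \ref{RCAsubset} only for wqo(set), for which \RCA\
already suffices --- the delicate linear-extension argument of Question
\ref{wqo(ext)subset} is never needed.

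The only point requiring care --- and the main (minor) obstacle --- is the
bookkeeping of infinite subsets and the passage between ``every map is
good'' and ``there is a monotone infinite set'': one must make sure that
restricting $f_2$ to an infinite set $A_1$, via its increasing enumeration,
and then pulling the resulting $A_2$ back to a subset of $A_1$, is all
legitimate in \RCA. This is exactly the kind of manipulation already
endorsed in the remark preceding Fact \ref{charactwqo} (existence of the
increasing enumeration of an infinite subset of \N), so no new set-existence
axiom enters. Everything else is a direct unwinding of definitions, and in
particular no combinatorial principle beyond what \RCA\ proves about finite
colorings is invoked --- in contrast with Lemma \ref{+duwqo}, whose proof
used \RT12, here the work is done entirely by the strong witness supplied by
wqo(set).
```
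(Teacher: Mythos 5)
Your proof is correct and follows the standard argument (the one in \cite{defwqo} that the survey cites): apply the wqo(set) witness for the first coordinate, refine the resulting infinite set by applying it again to the second coordinate via the increasing enumeration, conclude that the product is wqo(set), and then invoke the \RCA-provable implications from wqo(set) to the other three notions, together with Lemma \ref{RCAsubset} for the intersection case. Your closing observations --- that the diagonal/subset reduction only needs Lemma \ref{RCAsubset} for wqo(set), so the wqo(ext) difficulty of Question \ref{wqo(ext)subset} never arises, and that no pigeonhole principle is needed --- are exactly the right points; note only that the first bullet could also be proved directly by refining the witness set twice on the same $f:\N\to Q$, without detouring through the product.
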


\begin{theorem}\label{wqoprod}
Let $\mathcal{P}_1$ be any of the properties wqo, wqo(ext) and wqo(anti). Let
$\mathcal{P}_2$ be any of the properties wqo, wqo(set), wqo(ext) and
wqo(anti).
\begin{itemize}
\item \WKL\ does not prove that if $Q$ satisfies $\mathcal{P}_1$ with
    respect to the quasi-orders $\preceq_1$ and $\preceq_2$ then $Q$
    satisfies $\mathcal{P}_2$ with respect to the intersection quasi-order;
\item \WKL\ does not prove that if $Q_1$ and $Q_2$ satisfy $\mathcal{P}_1$
    then $Q_1 \times Q_2$ satisfies $\mathcal{P}_2$ with respect to the
    product quasi-order.
\end{itemize}
\end{theorem}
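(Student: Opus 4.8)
The plan is to refute all of the displayed nonimplications simultaneously, inside one $\omega$-model of \WKL. The external ingredient I would invoke is that \WKL\ does not prove \ADS: there is an $\omega$-model $\mathcal{M}\models\WKL$ containing a linear order $L$ (classically of order type $\omega+\omega^{\ast}$) such that $\mathcal{M}$ contains no infinite ascending and no infinite descending sequence through $L$. This is the model-theoretic content of $\WKL\nvdash\ADS$: a Scott set can be arranged so as to omit the ascending and descending sequences through a Denisov--Tennenbaum copy of $\omega+\omega^{\ast}$; see \cite{HS,slicing}. Everything below takes place in $\mathcal{M}$.

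Let $\preceq_L$ be the order of $L$, put $\preceq_1 = {\preceq_L}$, and let $\preceq_2$ be the reverse order, $x\preceq_2 y\iff y\preceq_L x$. Each of $(L,{\preceq_1})$ and $(L,{\preceq_2})$ is a linear quasi-order with no infinite descending sequence in $\mathcal{M}$ (an infinite $\preceq_2$-descending sequence would be an infinite ascending sequence of $L$), hence a well-order, hence wqo, and therefore also wqo(ext) and wqo(anti); so each of $\preceq_1,\preceq_2$ is a legitimate witness for every value of $\mathcal{P}_1$ allowed in the statement. I would use $\preceq_1,\preceq_2$ as the two quasi-orders in both parts of the theorem: on the common set $L$ for the intersection part, and as $Q_1=(L,{\preceq_1})$, $Q_2=(L,{\preceq_2})$ for the product part (the factors may share their underlying set, by Definition \ref{cartprod}).

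Now $\preceq_1\cap\preceq_2$ is the identity relation on $L$ (we may assume $L$ is antisymmetric, or pass to $L/\!{\sim}$), so the whole infinite set $L$ is an antichain for the intersection quasi-order. Similarly, in the product quasi-order on $L\times L$ built from $\preceq_1$ on the first and $\preceq_2$ on the second coordinate, $(x,x)\preceq_\times(y,y)$ holds iff $x\preceq_L y$ and $y\preceq_L x$, i.e.\ iff $x=y$, so the diagonal $\set{(x,x)}{x\in L}$ is an infinite antichain. Hence neither the intersection quasi-order on $L$ nor the product quasi-order on $L\times L$ is wqo(anti) in $\mathcal{M}$; and since over \WKL\ each of wqo, wqo(set) and wqo(ext) implies wqo(anti) (Theorem \ref{defWKL}), neither of them is wqo, wqo(set), or wqo(ext) in $\mathcal{M}$ either. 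Thus in $\mathcal{M}$ both bulleted statements fail for every admissible pair $(\mathcal{P}_1,\mathcal{P}_2)$, which is exactly what the theorem asserts about provability in \WKL.

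The one nonroutine step is the very first: the existence of $\mathcal{M}$, that is, $\WKL\nvdash\ADS$ with a witnessing linear order of the required shape. This is where the genuine work sits; the rest is bookkeeping with the diagrams of Theorems \ref{defRCA} and \ref{defWKL}. (When $\mathcal{P}_1$ is wqo(anti) and $\mathcal{P}_2$ is wqo, wqo(set) or wqo(ext), one can sidestep $\mathcal{M}$ entirely: the partial order $Q$ of \cite[Theorem 3.11]{defwqo} is, in a suitable $\omega$-model of \WKL, wqo(anti) but not wqo, hence also not wqo(set) or wqo(ext); and $Q\cong Q\times\{\ast\}$ while $\preceq_Q$ equals the intersection of $\preceq_Q$ with the full relation on $Q$, which by itself satisfies all four properties.)
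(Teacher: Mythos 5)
Your proof is correct, but it takes a genuinely different route from the paper's. The paper derives all instances of the theorem from Theorem 4.3 of \cite{defwqo}: a bespoke finite-injury construction of two computable partial orders $\preceq_0$ and $\preceq_1$ that are wqo in a fixed $\omega$-model of \WKL\ built from uniformly low sets, while ${\preceq_0}\cap{\preceq_1}$ is an infinite antichain; the remaining cases then follow via Lemma \ref{cup=times} and Theorem \ref{defWKL}. You instead import the separation $\WKL\nvdash\ADS$ from \cite{HS} and observe that an infinite linear order with no ascending or descending sequence in the model, paired with its reverse, gives two well-orders (hence wqo, wqo(ext) and wqo(anti) in the model, by Theorem \ref{defRCA}) whose intersection is equality and whose product has the diagonal as an infinite antichain. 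This is essentially the Frittaion--Marcone--Shafer observation, recorded in the paper immediately after the theorem, that closure of wqos under product implies \ADS, sharpened so as to cover every admissible pair $(\mathcal{P}_1,\mathcal{P}_2)$. Your argument is more economical in that it reuses a standard separation rather than a new construction, but it outsources the real work to the existence of a computable linear order of type $\omega+\omega^{\ast}$ with no infinite ascending or descending sequence in some Scott ideal, a result that postdates \cite{defwqo} and presumably explains why the original source argued directly. Both proofs cover all stated instances; note only that your closing parenthetical shortcut for $\mathcal{P}_1={}$wqo(anti) does not handle $\mathcal{P}_2={}$wqo(anti), so the main argument is still needed there.
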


All instances of Theorem \ref{wqoprod} follow easily (using Lemma
\ref{cup=times} and Theorem \ref{defWKL}) from Theorem 4.3 of \cite{defwqo}.
To state this theorem fix an $\omega$-model $\mathcal{M}$ of \WKL\ which
consists of the sets Turing reducible to a member of a sequence of uniformly
$\Delta^0_2$, uniformly low sets. The theorem asserts the existence of
computable partial orders $\preceq_0$ and $\preceq_1$ which are wqo in
$\mathcal{M}$ (i.e., $\mathcal{M}$ contains no bad sequence with respect to
either $\preceq_0$ or $\preceq_1$) and such that ${\preceq_0} \cap
{\preceq_1}$ is an infinite antichain (so that the intersection is not
wqo(anti)). The construction of $\preceq_0$ and $\preceq_1$ is by a finite
injury argument.

Theorem \ref{defCAC} and Lemma \ref{wqo(set)prod} imply that $\RCA + \CAC$
proves the closure of wqos under product. On the other hand Frittaion,
Marcone, and Shafer pointed out that this statement implies \ADS\ and asked
for a classification. Recently, Henry Towsner \cite{Towprod} gave a typical
zoo answer to this question by proving the following theorem.

\begin{theorem}
\WKL\ does not prove that the closure of wqos under product implies \CAC, nor
that \ADS\ implies the closure of wqos under product.
\end{theorem}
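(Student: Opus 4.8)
The two non-implications are established separately, in each case by producing an $\omega$-model of $\WKL$; recall that to show $\WKL$ does not prove $A \to B$ it suffices to exhibit an $\omega$-model of $\WKL$ in which $A$ holds and $B$ fails, and that the $\omega$-models of $\WKL$ are exactly the Scott ideals, obtained as an $\omega$-length union of forcing extensions in which one repeatedly adjoins a path through each infinite tree seen so far (chosen of low degree over the previous stage). The extra difficulty is to run such an iteration while \emph{also} meeting the combinatorial demands of the hypothesis and the single negative demand coming from the failure of the conclusion, and this is where preservation lemmas do the work.

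For \lq\lq $\WKL$ does not prove that closure under product implies $\CAC$\rq\rq: fix a computable partial order $P$ with no infinite chain and no infinite antichain (these exist, and such a $P$ witnesses $\neg\CAC$ in any $\omega$-model that adds neither an infinite $P$-chain nor an infinite $P$-antichain). I would build an $\omega$-model $\mathcal{M}$ with $P \in \mathcal{M}$ which (a) satisfies $\WKL$, (b) contains no infinite $P$-chain and no infinite $P$-antichain, and (c) satisfies the closure of wqos under product. Points (a) and (b) follow once the $\WKL$-forcing is known not to add solutions to the $\CAC$-instance $P$. For (c) the requirement is: whenever $Q_1, Q_2, h \in \mathcal{M}$ with $h = (h_1, h_2)$ a bad sequence for $Q_1 \times Q_2$, then $\mathcal{M}$ already contains a bad sequence for $Q_1$ or for $Q_2$, so that this instance of product closure is vacuously satisfied. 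The point is that such a witness can be extracted from $h$ by the Ramsey content of Dickson's lemma: colour the pair $m <_\N n$ according to whether $h_1(m) \preceq_1 h_1(n)$; on a set homogeneous for \lq\lq yes\rq\rq\ the restriction of $h_2$ to that set is bad in $Q_2$, and on a set homogeneous for \lq\lq no\rq\rq\ the restriction of $h_1$ to that set is bad in $Q_1$. Thus requirement (c) reduces to adjoining, for each such $h$, a homogeneous set for this particular two-colouring of pairs. The crux — and the step I expect to be the main obstacle — is a preservation theorem: both the $\WKL$-forcing steps and these Ramsey-forcing steps can always be carried out without adding a solution to the $\CAC$-instance $P$. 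This cannot hold for arbitrary $\RT22$-instances (since $\RT22$ proves $\CAC$, the comparability colouring of $P$ itself is an instance all of whose solutions solve $P$), so one must exploit the special shape of the colourings that arise here and choose $P$ to interact with them harmlessly; making this work requires the careful bookkeeping over uniformly low, uniformly $\Delta^0_2$ approximations already present in the construction behind Theorem \ref{wqoprod}.

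For \lq\lq $\WKL$ does not prove that $\ADS$ implies closure under product\rq\rq: start from computable partial orders $\preceq_1$, $\preceq_2$ as produced by the finite-injury construction behind Theorem \ref{wqoprod} (Theorem~4.3 of \cite{defwqo}), for which $\preceq_1 \cap \preceq_2$ contains a computable infinite antichain; by Lemma \ref{cup=times} this gives a computable infinite antichain in the product $Q_1 \times Q_2$, so the product is very far from being wqo, while $Q_1$ and $Q_2$ are wqo in a suitable low Scott ideal. The goal is an $\omega$-model $\mathcal{M}$ containing $Q_1$, $Q_2$ and this bad product sequence that satisfies $\WKL$ and $\ADS$ but contains no bad sequence for $Q_1$ and none for $Q_2$. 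One iterates a forcing with, besides the $\WKL$-steps, an $\ADS$-step for every linear order $L$ that has entered $\mathcal{M}$: adjoin an infinite ascending or descending sequence of $L$, via a Mathias-style forcing whose conditions are chosen low (or cone-avoiding) so as not to compute a bad sequence in $Q_1$ or $Q_2$. The invariant to be maintained throughout is exactly that $\mathcal{M}$ contains no bad sequence for $Q_1$ or $Q_2$, and the obstacle is again a preservation lemma: $\ADS$-forcing, together with $\WKL$-forcing, preserves the wqo-ness of $Q_1$ and $Q_2$. I expect this to require sharpening the construction of \cite{defwqo} so that $Q_1$ and $Q_2$ come with a robust bad-sequence-avoidance property — diagonalising directly against the relevant $\ADS$- and tree-functionals during the finite-injury construction, or invoking a known preservation property of $\ADS$ such as preservation of the relevant hyperimmunities — so that none of the ascending or descending sequences one adds ever lists a bad sequence in either factor.
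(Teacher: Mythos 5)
Your overall architecture --- two Scott-ideal constructions, each an iterated forcing whose crux is a preservation lemma --- matches the general shape of Towsner's proof, but there are two genuine gaps. The first is the reduction itself. For the separation from \CAC\ you reduce closure under product to adjoining \emph{homogeneous} sets for the two-colourings \lq\lq $h_1(m)\preceq_1 h_1(n)$ or not\rq\rq. Each such colouring has one transitive colour, so the instances you propose to solve form a subclass of the instances of \CAC\ in the Hirschfeldt--Shore formulation \cite{HS}; you are thus committed to fully solving some one-transitive-colour instances while preserving a counterexample to another, and you offer no mechanism for keeping the protected instance $P$ apart from the instances you must solve --- you only observe that such a mechanism is needed. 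The proof the paper reports \cite{Towprod} dissolves this tension before any forcing happens: closure under product is equivalent over \RCA\ not to a homogeneity statement but to the strictly weaker principle that for every $c:[\N]^2\to 3$ with colours $0$ and $1$ transitive there is an infinite set \emph{avoiding} colour $0$ or avoiding colour $1$ (take $c(m,n)=0$ when $h_1(m)\preceq_1 h_1(n)$, $c(m,n)=1$ when $h_2(m)\preceq_2 h_2(n)$, and $2$ otherwise; badness of $h$ makes this well defined and both colours transitive, and a set avoiding colour $i$ restricts $h_{i+1}$ to a bad sequence). It is this avoidance principle, strictly between \ADS\ and \CAC, that can be forced while preserving a \CAC\ counterexample; and for the second separation it is the precise statement to be falsified in a model of the restriction of \RT2\ell\ to all-transitive colourings for every $\ell$ (which yields \ADS). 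Without this reformulation your first construction aims at a stronger goal that may well be unattainable, and your second, while aimed at a correct target, lacks the clean combinatorial form that makes the diagonalisation against avoidance sets feasible.

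Second, even granting the correct reformulation, everything you flag as \lq\lq the main obstacle\rq\rq\ or preface with \lq\lq I expect this to require\rq\rq\ is the entire mathematical content of the theorem: the lemmas asserting that the \WKL-steps and the combinatorial steps can be interleaved forever without introducing a solution to the protected instance. Theorem \ref{wqoprod} and Theorem 4.3 of \cite{defwqo} give no real head start here, since they preserve wqo-ness only against a fixed countable family of uniformly low sets known in advance, whereas your iteration must additionally add solutions to instances that themselves depend on the generics already adjoined --- a substantially harder bookkeeping problem, and the one \cite{Towprod} is actually about. As it stands the proposal is a plan for a proof in which the two decisive lemmas are stated as hopes rather than proved.
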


Towsner starts by translating the statement in Ramsey-theoretic terms. Given
the coloring $c:[\N]^2 \to \ell$ we say that color $i$ is \emph{transitive}
if $c(k_0,k_2)=i$ whenever $c(k_0,k_1) = c(k_1,k_2) = i$ for some $k_1$
satisfying $k_0 < k_1 < k_2$. Hirschfeldt and Shore \cite{HS} noticed that
\ADS\ is equivalent to the restriction of \RT22 to colorings such that both
colors are transitive, while \CAC\ is equivalent to the restriction of \RT22
to colorings with one transitive color. Towsner notices that the closure of
wqos under product is equivalent to the following intermediate statement: if
$c: [\N]^2 \to 3$ is such that colors $0$ and $1$ are transitive then there
exists an infinite set $H$ such that for some $i<2$ we have $c(s) \neq i$ for
every $s \in [H]^k$ (i.e., $H$ avoids one of the transitive colors). Then he
proceeds to construct Scott ideals with the appropriate properties: the first
satisfies the above transitive color avoiding statement but not the
restriction of \RT22 to colorings with one transitive color; the second
satisfies for all $\ell$ the restriction of \RT2\ell\ to colorings such that
all color are transitive, but fails to satisfy the statement equivalent to
the closure of wqos under product.

Special instances of the closure of wqos under product have been studied by
Simpson \cite{Simpson88b}.

\begin{theorem}[\RCA]\label{finitepowersN}
Let $\omega$ denote the order $(\N, {\leq_\N})$. Then
\begin{enumerate}[1.]
\item the product of two copies of $\omega$ is wqo with respect to the
    product quasi-order.
\item the following are equivalent:%
\begin{enumerate}[\quad (i)]
  \item $\omega^\omega$ is well-ordered;
  \item for every $k \in \N$ the product of $k$ copies of $\omega$ is wqo
      with respect to the (obvious generalization of the) product
      quasi-order.
\end{enumerate}\end{enumerate}\end{theorem}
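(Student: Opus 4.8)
For part 1, the plan is to give a direct combinatorial argument, entirely within \RCA, showing that $\omega \times \omega$ is wqo. Given $f\colon \N \to \N \times \N$, write $f(n) = (a_n, b_n)$. The idea is to find $m <_\N n$ with $a_m \leq_\N a_n$ and $b_m \leq_\N b_n$. First, consider the set $S = \set{m \in \N}{\forall n >_\N m\ a_n \geq_\N a_m}$ of "left-to-right minima" of the first coordinate; this set is $\Pi^0_1$, but we do not need it as a set — instead, I would argue: either there are infinitely many $m$ such that $a_m \leq_\N a_n$ for all $n >_\N m$, in which case restricting to such $m$ we get an infinite sequence with non-decreasing first coordinate and then apply the fact that $\omega$ is wqo to the second coordinates; or there are only finitely many such $m$, in which case beyond some point every value $a_n$ is strictly exceeded later, allowing one to extract an infinite strictly descending sequence in $\omega$, contradicting that $\omega$ is a well-order. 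The delicate point is that extracting these infinite subsequences must be done using only $\Sigma^0_1$-induction / $\Delta^0_1$-comprehension; the standard way around this in \RCA\ is to phrase the argument as: a bad sequence $f$ for $\omega \times \omega$ would yield (by $\Delta^0_1$ recursion relative to $f$) either an infinite descending sequence in $\N$ or a bad sequence in $\omega$, both impossible. This "no bad sequence" formulation avoids the need to actually build the homogeneous-type set and keeps everything in \RCA.

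For part 2, the equivalence of (i) and (ii), I would proceed in two directions. For (i) $\Rightarrow$ (ii): work in \RCA\ and argue that if $\omega^\omega$ is well-ordered then for each fixed $k$ the $k$-fold product $\omega^k$ is wqo, by a straightforward induction on $k$ (the base case $k \leq 2$ is part 1, and the inductive step embeds $\omega^{k+1}$ into $\omega^k \times \omega$ and uses the same "no bad sequence splits into a bad sequence on a factor or a descending sequence" reasoning as in part 1 — this requires closure of wqo under product for two factors, which is exactly what part 1 gives for $\omega$, but more honestly one should embed $\omega^{k}$ order-preservingly into an initial segment of $\omega^\omega$ via $(a_1,\dots,a_k)\mapsto$ the sequence with those entries followed by zeros, so that a bad sequence in $\omega^k$ with the product order descends — carefully — into $\omega^\omega$). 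The cleanest route for (i) $\Rightarrow$ (ii): observe that $\omega^k$ with the product quasi-order embeds into $\omega^\omega$ ordered by the natural ordinal order in a way that sends the product order into an order extending it, hence a bad sequence for the product order on $\omega^k$ would, after passing to a non-decreasing-in-the-ordinal-order subsequence (which exists if $\omega^\omega$ is well-ordered), force equality or give a contradiction. For (ii) $\Rightarrow$ (i): suppose $\omega^\omega$ is not well-ordered, so there is an infinite descending sequence $(\alpha_n)$ of ordinals below $\omega^\omega$, i.e. of polynomials in $\omega$. Since the $\alpha_n$ are descending, by \RCA\ one can bound the degree: the degrees are non-increasing, hence eventually constant, say equal to $k$ beyond some stage; then the sequence of coefficient-tuples of these $\alpha_n$ is an infinite sequence in $\N^{k+1}$ which is strictly descending in the lexicographic order, and lexicographic descent in $\N^{k+1}$ readily yields a bad sequence for the product order on $\omega^{k+1}$ (indeed any lexicographically descending sequence in $\N^{k+1}$ is bad for $\preceq_\times$). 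The main obstacle here is the same throughout: performing these extractions of subsequences and the "eventually constant degree" argument using only the induction available in \RCA; I expect this to require phrasing each step as a bad-sequence / descending-sequence transformation computable from the data, rather than an unbounded search, and being careful that the "eventually constant degree" claim is an instance of $\Sigma^0_1$-induction (the degree is a non-increasing function $\N \to \N$, which stabilizes provably in \RCA).

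Overall the hard part is not the mathematics — these are classical facts — but the bookkeeping to stay inside \RCA: every passage to an infinite subsequence must be replaced by a direct construction of a witness to badness or of an infinite $\N$-descending sequence, and the degree-stabilization in the (ii) $\Rightarrow$ (i) direction must be justified from $\Sigma^0_1$-induction applied to the non-increasing degree function.
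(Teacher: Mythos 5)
Your direction \rr{ii}$\Rightarrow$\rr{i} is essentially correct: stabilizing the degree is an instance of the \SI01 least number principle available in \RCA, and a strictly lexicographically descending sequence of coefficient vectors is indeed bad for the product order on $\N^{k+1}$. The other two pieces have genuine gaps. For part 1, the set $S=\set{m}{\forall n>_\N m\ a_n\geq_\N a_m}$ is \PI01 and cannot be formed by \DE01 comprehension, and your fallback---that a bad $f$ yields by \DE01 recursion either an $\N$-descending sequence or a bad sequence in $\omega$---does not go through: when $S$ is infinite the putative bad sequence in $\omega$ is $n\mapsto b_n$ restricted to $S$, which is not computable from $f$, and the natural repair (picking successive elements of $S$, whose $b$-values must strictly decrease, until $b_{m_0}+1$ steps give a contradiction) is an induction on a \SI02 formula, unavailable in \RCA. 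A warning sign: if your dichotomy worked in \RCA\ it would show that the product of a well-order with any wqo is wqo, whereas Theorem \ref{wqoprod} records that even \WKL\ does not prove closure of wqos under binary product; the proof must exploit the ordinal bound. The proof the paper points to (Simpson \cite{Simpson88b}; the survey itself gives none) is a reification: to each nonempty finite bad sequence $\sigma$ in $\N^2$ assign computably the ordinal $o(\sigma)=\omega\cdot P_\sigma+N_\sigma<\omega^2$, where $P_\sigma$ counts the infinite rows and columns of $\set{w}{\sigma\conc\langle w\rangle\in\Bad(\N^2)}$ (a subset of finitely many rows and columns determined by $\sigma(0)$) and $N_\sigma$ the remaining points; one checks $o$ strictly decreases along extensions, so an infinite bad sequence yields a descending sequence in $\omega^2$, refuted in \RCA\ by two applications of the \SI01 least number principle.

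For \rr{i}$\Rightarrow$\rr{ii} both of your routes fail. Induction on $k$ is barred because ``the product of $k$ copies of $\omega$ is wqo'' is a \PI11 formula and \RCA\ has no \PI11 induction---this is precisely why the universally quantified statement is equivalent to the well-orderedness of $\omega^\omega$ rather than outright provable, and why part 1 does not simply propagate to all $k$. The embedding route also fails: a sequence that is bad for the product order on $\N^k$ need not be descending in the ordinal order $\omega^k$, and extracting a non-decreasing subsequence from a sequence of ordinals is not a consequence of well-orderedness but an \ADS-type principle. The correct argument is again the reification, carried out uniformly in $k$ with values below $\omega^k<\omega^\omega$, so that \rr{i} directly excludes infinite bad sequences.
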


Since $\omega^\omega$ is the proof theoretic ordinal of \RCA, it follows that
\RCA\ does not prove the statement \rr{ii} above.

Recently Hatzikiriakou and Simpson \cite{HatzSim} proved that another
statement dealing with wqos is equivalent to the fact that $\omega^\omega$ is
well-ordered. A Young diagram is a sequence of natural numbers $\langle m_0,
\dots, m_k \rangle$ such that $m_i \geq m_{i+1}$ and $m_k>0$. We denote by
$\mathcal{D}$ the set of all Young diagrams, and set $\langle m_0, \dots, m_k
\rangle \preceq_\mathcal{D} \langle n_0, \dots, n_h \rangle$ if and only if
$k \leq h$ and $m_i \leq n_i$ for all $i\leq k$.

\begin{theorem}[\RCA]\label{Young}
The following are equivalent:
\begin{enumerate}[\quad (i)]
  \item $\omega^\omega$ is well-ordered;
  \item $(\mathcal{D}, {\preceq_\mathcal{D}})$ is wqo.
\end{enumerate}
\end{theorem}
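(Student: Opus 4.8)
The plan is to prove $(ii)\Rightarrow(i)$ by embedding $(\mathcal D,\preceq_\mathcal D)$ into the ordinal $\omega^\omega$, and $(i)\Rightarrow(ii)$ by reducing a hypothetical bad sequence in $\mathcal D$ to one in a finite power of $\omega$ via Theorem~\ref{finitepowersN}.

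For $(ii)\Rightarrow(i)$, consider the map $o$ sending $\langle m_0,\dots,m_k\rangle$ to $\omega^{m_0-1}+\dots+\omega^{m_k-1}$. Since $m_0\geq\dots\geq m_k\geq 1$ this is a Cantor normal form, so $o$ is a bijection of $\mathcal D$ onto the nonzero ordinals below $\omega^\omega$, and $o$ and $o^{-1}$ are computable in \RCA\ from the standard notation system for $\omega^\omega$. The one lemma to verify is that $o$ is weakly order preserving: if $\langle m_0,\dots,m_k\rangle\preceq_\mathcal D\langle n_0,\dots,n_h\rangle$ then $\sum_{i\leq k}\omega^{m_i-1}\leq\sum_{i\leq h}\omega^{n_i-1}$; inside \RCA\ this follows from monotonicity and additivity of ordinal arithmetic on notations, raising the exponents one at a time and noting that weak decrease of the exponent sequence is preserved at each step. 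Assuming $(ii)$, suppose $\omega^\omega$ is not well ordered, witnessed by $\alpha_0>\alpha_1>\dots$; each $\alpha_n$ is nonzero, so $D_n:=o^{-1}(\alpha_n)$ is a Young diagram and $(D_n)_{n\in\N}$ exists by $\Delta^0_1$ comprehension. If $D_m\preceq_\mathcal D D_n$ with $m<_\N n$ then $\alpha_m=o(D_m)\leq o(D_n)=\alpha_n$, contradicting $\alpha_m>\alpha_n$; so $(D_n)$ is bad, contradicting $(ii)$.

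For $(i)\Rightarrow(ii)$, the auxiliary encoding is dual to $o$: to $D=\langle m_0,\dots,m_k\rangle$ associate its column counts $c_v(D):=|\{i\leq k:m_i\geq v\}|$ for $v\geq 1$. Then $c_1(D)\geq c_2(D)\geq\dots$, the $c_v(D)$ are eventually $0$, and (using that the $m_i$ are weakly decreasing) $D\preceq_\mathcal D D'$ iff $c_v(D)\leq c_v(D')$ for all $v$. Hence if every part of every member of a set $Q\subseteq\mathcal D$ is $\leq M$, then $D\mapsto(c_1(D),\dots,c_M(D))$ embeds $Q$ order-isomorphically into the product of $M$ copies of $\omega$, which is wqo by $(i)$ and Theorem~\ref{finitepowersN}; so $Q$ is wqo. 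This handles the case of uniformly bounded parts; the remaining work is to pass from this to all of $\mathcal D$. For that I would run a minimal bad sequence argument adapted to the weak base: granting a bad sequence, take one $(D_n)$ that is pointwise $o$-minimal---the complexities $o(D_n)$ lie in $\omega^\omega$, well ordered by $(i)$, so the successive minimizations can be carried out---and write $D_n$ as its largest part $p_n$ together with the residual diagram $D_n'$. Since $D_n'$ is a subdiagram of $D_n$, any bad sequence built from the $D_n'$ can be spliced into $(D_n)$ to contradict minimality, so the $D_n'$ range over a wqo; combining this with the fact that the $p_n$ lie in $\N$ one finds $m<_\N n$ with $p_m\leq_\N p_n$ and $D_m'\preceq_\mathcal D D_n'$, and since $p_n$ dominates every part of $D_n'$ this reassembles to $D_m\preceq_\mathcal D D_n$, contradicting badness.

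The hard part is precisely the minimality step and the concluding combination of two wqos in the last paragraph: naively each wants a principle unavailable over \RCA---choice, to extract the minimal bad sequence, and the Ramsey-theoretic form of wqo (wqo(set)) for $\N$, to combine $(p_n)$ with $(D_n')$. Making the argument work means verifying that well-orderedness of $\omega^\omega$, and nothing stronger, licenses the minimization, and replacing the combination by one that avoids wqo(set) for $\N$ (say, by arranging $(p_n)$ to be nondecreasing from the outset, or by a direct descent on the $p_n$). That exactly this much strength suffices is forced by the equivalence being proved; in particular $(ii)$ is not provable in \RCA, just as the equivalent statements of Theorem~\ref{finitepowersN} are not, since $\omega^\omega$ is the proof-theoretic ordinal of \RCA. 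A cleaner route avoiding minimality altogether would be to extract, directly from a bad sequence with unbounded parts, either an infinite descending sequence in $\omega^\omega$ or a bad sequence in some finite power of $\omega$---whether that can be done with only $(i)$ in hand is the delicate question.
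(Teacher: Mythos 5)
Your direction \rr{ii}$\Rightarrow$\rr{i} is correct and complete: the map $o$ sending $\langle m_0,\dots,m_k\rangle$ to $\omega^{m_0-1}+\dots+\omega^{m_k-1}$ is a computable bijection onto the nonzero notations below $\omega^\omega$, its weak order-preservation is a $\Delta^0_1$-verifiable fact about comparison of Cantor normal forms that \RCA\ proves, and pulling back a descending sequence gives a bad sequence. The transposition observation (column counts turn $\preceq_\mathcal{D}$ into the product order, so diagrams with parts bounded by $M$ embed into $\omega^M$) is also correct and is a genuinely useful ingredient. The survey does not reproduce a proof of this theorem --- it cites Hatzikiriakou and Simpson --- but the cited proof of the hard direction proceeds by the reification technique described in this paper in connection with Higman's theorem, not by minimality.

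The genuine gap is in your treatment of unbounded parts in \rr{i}$\Rightarrow$\rr{ii}. The minimal bad sequence argument cannot be rescued in \RCA\ by observing that the values $o(D_n)$ lie in the well-order $\omega^\omega$: the obstruction to extracting a minimal bad sequence is not the well-foundedness of the ranking but the fact that ``this finite sequence extends to a bad sequence'' is a $\Sigma^1_1$ condition, so the successive minimizations require $\Sigma^1_1$-comprehension-like strength; by Theorem \ref{mbs} the minimal bad sequence lemma is equivalent to \PCA, and nothing in hypothesis \rr{i} supplies a substitute. Your concluding combination step has the same character: finding $m<_\N n$ with both $p_m\leq_\N p_n$ and $D'_m\preceq_\mathcal{D} D'_n$ is an instance of closure of wqos under product with $\omega$, which is itself not available in \RCA\ (cf.\ Theorem \ref{wqoprod} and the discussion of wqo(set) for well-orders). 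You flag both difficulties yourself but do not resolve them, and the ``cleaner route'' you gesture at is left as an open question, so the hard direction is not proved. The way to close the gap is the reification method of \cite{Simpson88b}, as used for Theorem \ref{ACAHig}: define, primitively recursively from the notation system, a map $\rho$ from $\Bad(\mathcal{D})$ to $\omega^\omega$ with $\rho(t)<\rho(s)$ whenever $s\init t$ (assigning to a finite bad sequence the maximal order type of the set of diagrams that may still extend it, computed via your column-count coordinates from the finitely many constraints $D_i\not\preceq_\mathcal{D} x$). \RCA\ proves such a reification exists, and then \rr{i} directly yields that $\Bad(\mathcal{D})$ is well-founded, i.e., that $\mathcal{D}$ is wqo, with no minimization and no product closure needed.
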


Theorems \ref{finitepowersN} and \ref{Young} are both motivated by the study
of results about the non-existence of infinite ascending sequences of ideals
in rings.

\section{Characterizations and basic properties of bqos}\label{sec:charbqo}
To give the definition of bqo we need some terminology and notation for
sequences and sets (here we follow \cite{NWT}). All the definitions are given
in \RCA. Let \Seq\ be the set of finite sequences of natural numbers. If $s
\in \Seq$ we denote by $\lh s$ its length and, for every $i< \lh s$, by
$s(i)$ its ($i+1$)-th element. Then we write this sequence as $s = \langle
s(0), \dots, s(\lh s -1) \rangle$. If $s,t \in \Seq$ we write $s \initeq t$
if $s$ is an initial segment of $t$, i.e., if $\lh s \leq \lh t$ and $\forall
i < \lh s\, s(i)=t(i)$. We write $s \subseteq t$ if the range of $s$ is a
subset of the range of $t$, i.e., if $\forall i < \lh s\, \exists j < \lh t\,
s(i) = t(j)$. $s \init t$ and $s \subset t$ have the obvious meanings. We
write $s \conc t$ for the concatenation of $s$ and $t$, i.e., the sequence
$u$ such that $\lh u = \lh s + \lh t$, $u(i) = s(i)$ for every $i<\lh s$, and
$u(\lh s + i) = t(i)$ for every $i <\lh t$. These notations are extended to
infinite sequences (i.e., functions with domain $\N$) as well.

If $X \subseteq \N$ is infinite we denote by $\sq X$ the set of all finite
subsets of $X$. We identify an element of $\sq \N$ with the unique element of
\Seq\ which enumerates it in increasing order, so that we can use the
notation introduced above. If $k \in \N$, $[X]^k$ is the subset of $\sq X$
consisting of the sets with exactly $k$ elements. Similarly $\Sq X$ stands
for the collection of all infinite subsets of $X$. Note that $\Sq X$ does not
formally exist in second order arithmetic, and is only used in expressions of
the form $Y \in \Sq X$; here again we identify $Y$ with the unique sequence
enumerating it in increasing order (notice that in \RCA\ an element of $\Sq
X$ exists as a set if and only if it exists as an increasing sequence, so
that this identification is harmless). For $X \in \Sq \N$ let $X^- = X
\setminus \{\min X\}$, i.e., $X$ with its least element removed. Similarly if
$s \in \sq \N$ is nonempty we set $s^- = s \setminus \{\min s\}$.

If $B \subseteq \sq \N$ then $\base (B)$ is the set
\[
\set{n}{\exists s \in B \, \exists i < \lh s \, s(i) = n}.
\]
\RCA\ does not prove the existence of $\base (B)$ for arbitrary $B \subseteq
\sq \N$; indeed in \cite[Lemma 1.4]{wqobqo} it is shown that, over \RCA,
\ACA\ is equivalent to the assertion that $\base (B)$ exists as a set for
every $B \subseteq \sq \N$. However this does not affect the possibility of
defining blocks and barriers within \RCA: e.g., ``$\base (B)$ is infinite''
(which is condition (1) in the definition of block below) can be expressed by
$\forall m\, \exists n > m\, \exists s \in B\, n \in s$. Similarly, when we
say $X$ is a subset of $\base (B)$ (for example in condition (2) of the
definition of block), we mean $\forall x \in X\, \exists s \in B\, x \in s$.
After giving the definitions, Lemma \ref{base} below will show that in fact
\RCA\ proves that $\base(B)$ exists whenever $B$ is a block (and, a fortiori,
a barrier).

\begin{definition}[\RCA]\label{def:barrier}
A set $B \subseteq \sq \N$ is a \emph{block} if:%
\begin{enumerate}[\quad (1)]
\item $\base(B)$ is infinite;
\item $\forall X \in \Sq{\base(B)}\, \exists s \in B\, s \init X$;
\item $\forall s,t \in B\, s \not\init t$.
\end{enumerate}
$B$ is a \emph{barrier} if it satisfies (1), (2) and
\begin{enumerate}[\quad (1')]\setcounter{enumi}2
\item $\forall s,t \in B\, s \not\subset t$.
\end{enumerate}
\end{definition}

Within \RCA\ it is immediate that every barrier is a block and we can check
that $[\N]^k$ (for $k>0$), $\set{s \in \sq \N}{\lh s = s(0) +1}$ and $\set{s
\in \sq \N}{\lh s = s(s(0)) +1}$ are barriers.

Notice that if $B$ is a block and $Y \in \Sq{\base(B)}$ then \RCA\ proves
that there exists a unique block $B' \subseteq B$ such that $\base(B') = Y$:
in fact $B' = \set{s \in B}{s \subset Y}$. Moreover if $B$ is a barrier then
$B'$ is also a barrier and we say that $B'$ is a \emph{subbarrier} of $B$.

The following result is Lemma 5.5 of \cite{defwqo}.

\begin{lemma}[\RCA]\label{base}
If $B$ is a block then $\base (B)$ exists as a set and $B$ is isomorphic to a
block $B'$ with $\base(B') = \N$.
\end{lemma}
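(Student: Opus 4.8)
The plan is to prove the two assertions in sequence: first that $\base(B)$ is $\DE01$ relative to $B$ (so that $\RCA$ proves it exists as a set), and then to obtain $B'$ by a routine relabeling along the increasing enumeration of $\base(B)$. Recall that for an \emph{arbitrary} $B \subseteq \sq{\N}$ only $\ACA$ proves that $\base(B)$ exists, so the bar condition (2) (together with (1) and (3)) must be used in an essential way.

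First I would streamline the $\SI01$ description of $\base(B)$ using (2). Given $n$, since $\base(B)$ is infinite one can, from $B$ alone, effectively build an infinite strictly increasing sequence $X$ with $X(0) = n$ all of whose entries lie in $\base(B)$, provided $n \in \base(B)$: keep appending elements already enumerated into $\base(B)$, each larger than the previous one, which never gets stuck by (1). Applying (2) to such an $X$ yields $s \in B$ with $s(0) = n$; conversely $s(0) \in \base(B)$ for every $s \in B$. Hence $n \in \base(B) \iff \exists s \in B\, s(0) = n$. Moreover this particular $X$ has an initial segment in $B$ precisely when $n \in \base(B)$, while if $n \notin \base(B)$ then no initial segment of $X$ lies in $B$ (each nonempty one has $n$ in its range, and $\langle\rangle \notin B$).

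The heart of the proof — and the step I expect to be the main obstacle — is to complement this with a $\PI01$ definition of $\base(B)$, i.e.\ a finite certificate for $n \notin \base(B)$. Here condition (2) must be leveraged to recognize when the effective enumeration of the members of $\base(B)$ below a given bound has been exhausted. The idea I would pursue: once enough of $B$ has been enumerated that the members of $B$ seen so far already bar every strictly increasing sequence consisting of currently-known small members of $\base(B)$ followed by arbitrarily chosen large ones, no new small member can ever appear, for otherwise one could extend an un-barred sequence through $\base(B)$, contradicting (2). This exploits (3) (so that the relevant tree of un-barred attempts is well-founded) together with the fact that, for any $N$, the set $\set{s \in B}{\operatorname{range}(s) \subseteq [0,N]}$ is an explicitly decidable finite set. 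The delicate point — which is where the real work lies — is to verify that this "stabilization detector" is a genuine total $\DE01$ function of $n$, so that the whole argument can be carried out with only $\SI01$ induction and $\SI01$ bounding.

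Granting that $\base(B)$ exists as a set $D$, it is infinite by (1), so $\RCA$ furnishes its increasing enumeration $e \colon \N \to D$. I would then set $B' = \set{\langle e^{-1}(s(0)), \dots, e^{-1}(s(\lh s - 1)) \rangle}{s \in B}$, a $\DE01$ set. Then $\base(B') = \N$, and since $e$ is an order isomorphism of $(D, {\leq_\N})$ onto $(\N, {\leq_\N})$ it carries $B$ onto $B'$ while preserving and reflecting $\initeq$ and $\subseteq$; hence $B'$ inherits conditions (1)--(3) from $B$ and is a block (and a barrier if $B$ is). This last part is entirely routine.
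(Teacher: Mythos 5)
The paper itself gives no proof of this lemma --- it is quoted from \cite{defwqo} (Lemma 5.5) --- so I assess your argument on its own terms. Your decomposition is the right one, and the parts you call routine really are: $\langle\rangle \notin B$, the identity $\base(B) = \set{s(0)}{s \in B}$ obtained by applying condition (2) to $\{n\} \cup V$ where $V$ is a tail of an effectively enumerated infinite subset of $\base(B)$ lying above $n$, and the final relabelling of $B$ along the increasing enumeration of $\base(B)$ all go through in \RCA. You have also correctly located the crux: a $\Sigma^0_1$ certificate for $n \notin \base(B)$.

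That crux, however, is exactly where your argument has a genuine gap, and the ``stabilization detector'' does not close it. First, its defining condition cannot be finitely verified in a form that keeps its intended strength: if the ``arbitrarily chosen large ones'' range over arbitrary large numbers, no finite portion of $B$ bars all such tails; if they range over elements of $\base(B)$, only finitely many are known at any stage, and a barring witnessed inside the known data says nothing about sequences passing through a small element of $\base(B)$ that has not yet been enumerated --- which is precisely what must be excluded. Second, the purported contradiction with (2) does not follow: condition (2) constrains only \emph{infinite} subsets of $\base(B)$ existing as sets in the model, and a finite increasing sequence with no initial segment in $B$ is perfectly compatible with (2) (every proper initial segment of a member of $B$ is such a sequence); to reach a contradiction you would have to extend your un-barred finite sequence to an infinite subset of $\base(B)$ avoiding $B$ forever, and constructing that extension is as hard as the original problem. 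Third, the parenthetical appeal to (3) for well-foundedness is misplaced --- well-foundedness of the tree of un-barred increasing sequences through $\base(B)$ comes from (2), not from the antichain condition --- and since that tree is infinitely branching, its well-foundedness yields no computable bounds anyway. What a correct proof must supply is an explicit, provably total bound $K(n)$ such that, whenever some initial segment of $\{n\} \cup V$ lies in $B$, one of length at most $K(n)$ does; then $n \in \base(B)$ becomes $\Delta^0_1$. For \emph{barriers} this is easy: letting $\tau \in B$ be the initial segment of $V$ in $B$ (found by a terminating search), any $\sigma \in B$ initial in $\{n\} \cup V$ with $\lh \sigma > \lh \tau + 1$ would have $\operatorname{range}(\tau) \subset \operatorname{range}(\sigma)$, violating (3'), so $K(n) = \lh \tau + 1$ works. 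For \emph{blocks} condition (3) gives no such bound ($\tau$ is not an initial segment of $\sigma$, since their first entries differ), and this is where the real work of the cited lemma lies. As written, your proof establishes the result only modulo this unproved bounding step.
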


\begin{definition}[\RCA]\label{tri}
Let $s,t \in \sq \N$: we write $s \tri t$ if there exists $u \in \sq \N$ such
that $s \initeq u$ and $t \initeq u^-$.
\end{definition}

Notice that $\langle 2,4,9 \rangle \tri \langle 4,9,10,14 \rangle \tri
\langle 9,10,14,21 \rangle$ and $\langle 2,4,9 \rangle \ntri \langle
9,10,14,21 \rangle$, so that $\tri$ is not transitive.

\begin{definition}[\RCA]\label{good/bad}
Let $(Q, {\preceq})$ be a quasi-order, $B$ be a block and $f: B \to Q$. We
say that $f$ is \emph{good} (with respect to $\preceq$) if there exist $s,t
\in B$ such that $s \tri t$ and $f(s) \preceq f(t)$. If $f$ is not good then
we say that it is \emph{bad}. $f$ is \emph{perfect} if for every $s,t \in B$
such that $s \tri t$ we have $f(s) \preceq f(t)$.
\end{definition}

We can now give the definition of bqo:

\begin{definition}[\RCA]\label{def:bqo}
Let $(Q, {\preceq})$ be a quasi-order.
\begin{itemize}
  \item $Q$ is \emph{bqo} if for every barrier $B$ every $f: B \to Q$ is
      good with respect to $\preceq$;
  \item $Q$ is \emph{bqo(block)} if for every block $B$ every $f: B \to Q$
      is good with respect to $\preceq$.
\end{itemize}\end{definition}

An alternative definition of bqo was given by Simpson in \cite{SinMW}. A
block $B$ represents an infinite partition of $\Sq{\base (B)}$ into clopen
sets with respect to the topology that $\Sq{\base (B)}$ inherits from
$\N^\N$. Thus any $f: B \to Q$ represents a continuous function $F: \Sq{\base
(B)} \to Q$ where $Q$ has the discrete topology; $f$ is good if for some $X
\in \Sq{\base (B)}$ we have $F (X) \preceq F (X^-)$. Therefore $(Q,
{\preceq})$ is bqo if and only if for every continuous function $F: \Sq{\base
(B)} \to Q$ there exists $X \in \Sq{\base (B)}$ such that $F (X) \preceq F
(X^-)$. Moreover if we replace continuous with Borel we are still defining
the same notion (this follows from the fact, originally proved by Mathias,
that for every Borel function $F: \Sq{\base (B)} \to Q$ there exists $X \in
\Sq{\base (B)}$ such that the restriction of $F$ to $\Sq X$ is continuous).
We are not discussing these alternative characterizations of bqo here, but
they have been exploited by Montalb\'{a}n in his proof of Theorem \ref{FC} below.

It is easy to see (using the barrier $[\N]^1$ and the fact that $\langle m
\rangle \tri \langle n \rangle$ if and only if $m<n$) that \RCA\ proves that
every bqo is wqo.

Lemma \ref{base} shows that within \RCA\ we can restrict the definition of
bqo and bqo(block) to functions with domain barriers or blocks with base \N.
It is also immediate that every bqo(block) is also a bqo. For the opposite
implication, we have the following result \cite[Theorem 5.12]{defwqo}.

\begin{lemma}[\WKL]\label{bqo->bqo(block)}
Every bqo is bqo(block).
\end{lemma}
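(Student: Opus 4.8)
The plan is to reduce a bad array on a block to a bad array on a barrier, and to invoke $\WKL$ only to manufacture the barrier by a compactness argument.

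By Lemma~\ref{base} we may assume $B$ is a block with $\base(B)=\N$, and we suppose towards a contradiction that some $f\colon B\to Q$ is bad. The first, purely \RCA, point is that for \emph{any} infinite $X\subseteq\N$ the set $B\restriction X:=\set{s\in B}{s\subseteq X}$ is again a block, with base exactly $X$: condition~(3) of Definition~\ref{def:barrier} is inherited from $B$, and for condition~(2) observe that the unique $\init$-initial segment of an infinite $X'\subseteq X$ that lies in $B$ is automatically contained in $X'$, hence in $X$, so it lies in $B\restriction X$ (taking $X'=\{n\}\cup(\text{a tail of }X)$ for $n\in X$ also shows every $n\in X$ is in $\base(B\restriction X)$). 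Moreover badness passes to restrictions: if $s,t\in B\restriction X$ and $s\tri t$, then $s,t\in B$, so $f(s)\npreceq f(t)$. Hence it suffices to produce an infinite $X$ for which $B\restriction X$ is moreover a barrier, i.e.\ contains no $\subseteq$-comparable pair of sequences; for then $f\restriction(B\restriction X)$ is a bad array on a barrier (its base is $X$, but by Lemma~\ref{base} this is harmless), contradicting the hypothesis that $Q$ is bqo.

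Thus the whole lemma reduces to the claim, to be proved in $\WKL$, that \emph{every block $B$ with $\base(B)=\N$ has an infinite subset $X$ such that $B\restriction X$ is a barrier}. Let $B^{\mathrm{bad}}=\set{t\in B}{\exists s\in B\ s\subsetneq t}$. Since $B\restriction X$ is always a block, it is a barrier exactly when $X$ contains the range of no $t\in B^{\mathrm{bad}}$; call a finite set with this property \emph{admissible} (a decidable condition, as only finitely many pairs $s,t$ need be checked for a given finite set). I would run the usual compactness argument: let $T$ be the tree of admissible increasing finite sequences whose $k$-th entry is bounded by a suitable computable function of $k$, chosen large enough that $T$ has elements of every length; then bounded K\"onig's lemma, which is a form of $\WKL$, yields an infinite path, i.e.\ an infinite admissible $X$, and $X$ is admissible in the limit because admissibility has finite character, so $B\restriction X$ is a barrier.

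The heart of the proof, and the step I expect to be the main obstacle, is the combinatorial input that feeds this argument: that a block has admissible subsets of every finite size (so that the bounding function above can be chosen and $T$ is infinite). That admissible singletons and pairs exist is checked directly --- each time one argues that \lq\lq catching\rq\rq\ a small set with a member of $B^{\mathrm{bad}}$ would force, through condition~(3) (no member of $B$ is a $\init$-initial segment of another) together with the coherence of the block, a member of $B$ that cannot be there --- but pushing this bookkeeping to arbitrary size is the real work. This is the finitary content of Nash-Williams' classical theorem that every block can be refined to a barrier; the reverse-mathematical moral of the lemma is that this refinement demands nothing beyond the compactness embodied in $\WKL$ (it is quite possible that the cardinality claim itself goes through already in \RCA, $\WKL$ entering only via K\"onig's lemma for $T$). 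A variant, which avoids admissibility, is to prune the well-founded tree $T_B=\set{u}{\exists s\in B\ u\initeq s}$ so as to destroy all $\subseteq$-comparabilities among its leaves; but the version above keeps the use of $\WKL$ most visible.
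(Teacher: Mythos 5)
Your opening reduction is correct as far as it goes: for any infinite $X\subseteq\base(B)$ the set $B\restriction X=\set{s\in B}{s\subset X}$ is a block with base $X$, badness passes to the restriction, and so the lemma would follow from the assertion that every block has a sub-block which is a barrier. But that assertion is precisely what the paper's proof is designed to avoid: as remarked immediately after the lemma, the natural proof that every block contains a barrier goes through the clopen Ramsey theorem, which is equivalent to \ATR\ over \RCA, and nothing in your argument brings it down to \WKL. The compactness step is where the proposal genuinely breaks, not merely where it becomes laborious. Bounded K\"onig's lemma needs not \lq\lq admissible sets of every finite cardinality\rq\rq\ but admissible sets of every finite cardinality lying below a single computable bound $h$; equivalently, since admissibility is downward closed under subsets, an infinite set all of whose finite subsets are admissible. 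For a general downward-closed family of finite sets, having arbitrarily large members yields neither: the family $\set{F}{|F|\leq\min F}$ has members of every size, yet every bounded subtree of its tree of increasing sequences is finite (a node of length $k$ forces $k\leq x_0<h(0)$) and no infinite set has all of its finite subsets in the family. Excluding this behaviour for the family of admissible sets attached to a block is exactly the Ramsey-theoretic core of Nash-Williams' refinement theorem, so the step you defer as \lq\lq bookkeeping\rq\rq\ is the entire theorem, and there is no reason to expect it to be available in \WKL\ (your parenthetical hope that the combinatorial input might already hold in \RCA\ is in tension with the paper's own remarks).

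The paper's proof takes a different route precisely to sidestep this: following the construction of \cite{fbqo}, from the block $B$ and the bad $f\colon B\to Q$ one builds a barrier that is connected to, but in general \emph{not included in}, $B$, together with an induced bad array on it; \WKL\ is used inside that construction rather than in any attempt to thin $B$ to a subbarrier. To salvage your strategy you would first have to prove in \WKL\ that every block contains a barrier, which is a separate claim that the paper does not make and your argument does not establish.
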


The natural proof that every bqo is bqo(block) uses the clopen Ramsey
theorem, which is equivalent to \ATR, to show that every block contains a
barrier. The proof of Lemma \ref{bqo->bqo(block)} instead exploits a
construction originally appeared in \cite{fbqo} and builds a barrier which is
connected to, but in general not included in, the original block.

Lemma \ref{bqo->bqo(block)} leads to the following question:

\begin{question}
Is \lq\lq every bqo is bqo(block)\rq\rq\ equivalent to \WKL\ over \RCA?
\end{question}

Another characterization of bqos corresponds to the wqo(set) characterization
of wqos.

\begin{definition}
A quasi-order $(Q, {\preceq})$ is \emph{bqo(set)} if for every barrier $B$
and every $f: B \to Q$ there exists a subbarrier $B' \subseteq B$ such that
$f$ restricted to $B'$ is perfect with respect to $\preceq$.
\end{definition}

\RCA\ trivially proves that every bqo(set) is bqo, while the reverse
implication is known to be much stronger (see \cite[Theorem 4.9]{wqobqo},
which revisits \cite[Lemma V.9.5]{sosoa}).

\begin{theorem}[\RCA]
The following are equivalent:
\begin{enumerate}[\quad (i)]
  \item \ATR;
  \item every bqo is bqo(set).
\end{enumerate}\end{theorem}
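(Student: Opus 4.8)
===BEGIN PROOF PROPOSAL===

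\textbf{Approach.} The plan is to establish the two directions separately, using the hierarchy of bqo-characterizations as a bridge. The implication \rr{i}$\to$\rr{ii} should be a direct formalization of the classical ``minimal bad array'' argument — in \ATR\ we have enough transfinite recursion to carry out the Nash-Williams-style construction that produces a perfect subbarrier from a bad array, or equivalently to iterate arithmetical comprehension along the well-founded tree of attempts. For \rr{ii}$\to$\rr{i}, the strategy is to reverse: since the paper points to \cite[Theorem 4.9]{wqobqo} revisiting \cite[Lemma V.9.5]{sosoa}, the expected route is to code, for an arbitrary well-ordering or an arbitrary instance of \SI11-separation, a specific barrier $B$ and a map $f\colon B\to Q$ for a suitably chosen $Q$ so that a perfect subbarrier of $f$ encodes the desired set. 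A natural candidate for $Q$ in the reversal is a quasi-order built from the well-orderings involved (for \SI11-separation one typically uses a pair of trees and the quasi-order on their associated linear orderings), arranged so that ``$f$ restricted to $B'$ is perfect'' forces a choice that witnesses the separation.

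\textbf{Key steps for \rr{i}$\to$\rr{ii}.} First I would fix a barrier $B$ (WLOG with $\base(B)=\N$ by Lemma \ref{base}) and a bad-or-not map $f\colon B\to Q$; if $f$ is bad we must already have a counterexample to $Q$ being bqo, so assume toward the construction that we work with an arbitrary $f$ and seek a perfect subbarrier. Second, I would set up the transfinite recursion: along the well-founded ``Nash-Williams tree'' of finite attempts to build a bad sub-array, use arithmetical comprehension iterated along this ordinal (available in \ATR) to peel off, stage by stage, the subbarriers on which $f$ fails to be perfect. Third, I would argue that the recursion terminates — this is where well-foundedness of the relevant ordinal, provable because $B$ is a barrier, is used — and that the limit yields a subbarrier $B'$ with $f\restriction B'$ perfect. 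The technical heart here is verifying that each stage is a single application of arithmetical comprehension and that the ordinal along which we recurse is itself a well-ordering obtainable in \RCA\ from $B$, so that \ATR\ genuinely suffices.

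\textbf{Key steps for \rr{ii}$\to$\rr{i}.} I would aim to derive \SI11-separation (equivalent to \ATR\ by \cite[Theorem V.5.1]{sosoa}). Given \SI11-formulas $\varphi,\psi$ with $\forall n\,\neg(\varphi(n)\wedge\psi(n))$, present $\varphi(n)$ and $\psi(n)$ via trees $T_n^0$, $T_n^1$ whose ill-foundedness encodes the respective formula, and assemble a single quasi-order $Q$ — essentially a disjoint/summed family of Kleene–Brouwer-type linear orderings of these trees — together with a barrier $B$ (take $[\N]^2$ or $\set{s}{\lh s = s(0)+1}$) and a map $f\colon B\to Q$ that, on the ``component'' indexed by $n$, records the position of $n$ with respect to both trees. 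Then show that a perfect subbarrier $B'$ of $f$ sorts the indices $n$ into those on the $\varphi$-side and those on the $\psi$-side in a way that is itself a set available in the model, and read off the separating set $X$ from $B'$. Verifying that perfection of $f\restriction B'$ really forces a coherent, ``definable-from-$B'$'' separation — and that no perfect subbarrier can exist unless the separation does — is the delicate bookkeeping step.

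\textbf{Main obstacle.} I expect the hardest part to be the reversal \rr{ii}$\to$\rr{i}: engineering the quasi-order $Q$, the barrier, and the array $f$ so that ``$f\restriction B'$ is perfect'' is equivalent, over \RCA, to the existence of a $\SI11$-separating set, and in particular making sure the coding does not secretly require more than \RCA\ to set up while still being strong enough to force \ATR\ out. The forward direction is essentially a careful but standard transcription of the minimal-bad-array machinery into \ATR's transfinite recursion; its only subtlety is confirming the recursion length is a genuine well-ordering in the sense of \RCA. I would lean on the treatments in \cite[Theorem 4.9]{wqobqo} and \cite[Lemma V.9.5]{sosoa} for the precise combinatorial coding in the reversal.

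===END PROOF PROPOSAL===
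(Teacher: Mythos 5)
The main problem is in your forward direction. You propose to prove \rr{i}$\to$\rr{ii} by transcribing the \lq\lq minimal-bad-array machinery\rq\rq\ into \ATR's transfinite recursion, but that machinery is simply not available at this level: by Theorem \ref{mbs} even the \emph{locally} minimal bad array lemma is already equivalent to \PCA, which is strictly stronger than \ATR, and the full minimal bad array lemma is only known to be provable in \PPCA. So a Nash-Williams-style minimal bad array construction cannot be the engine of a proof inside \ATR, and your description of \lq\lq peeling off, stage by stage, the subbarriers on which $f$ fails to be perfect\rq\rq\ does not correspond to a recursion whose stages are single arithmetical comprehensions along a well-order already coded in \RCA. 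The proof the paper points to (\cite[Lemma V.9.5]{sosoa}, revisited in \cite[Theorem 4.9]{wqobqo}) instead uses the clopen Ramsey theorem, which is equivalent to \ATR\ (as the paper itself recalls when discussing bqo(block)): given $f\colon B \to Q$, form the clopen partition of $\Sq{\base(B)}$ according to whether $f(s) \preceq f(t)$ for the pair $s \tri t$ with $s \init X$, $t \init X^-$, $s,t \in B$; a homogeneous set on the negative side yields a bad array, contradicting that $Q$ is bqo, while a homogeneous set on the positive side is the base of the desired perfect subbarrier. This is one application of clopen Ramsey, not an iterated peeling argument.

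Your reversal is workable in spirit but more elaborate than necessary, and as written it leaves the genuinely delicate coding unspecified. The short route is to apply hypothesis \rr{ii} to the two-element antichain $2$, which \RCA\ proves is bqo (item \rr{1} of Theorem \ref{finitebqo}); since a perfect subbarrier for an array into an antichain is exactly a subbarrier on which the array is constant, \lq\lq$2$ is bqo(set)\rq\rq\ is precisely the clopen Ramsey theorem for partitions indexed by barriers, and its reversal to \ATR\ via \SI11-separation (with essentially the tree/Kleene--Brouwer coding you sketch) is item \rr{4} of Theorem \ref{finitebqo}. So the bookkeeping you flag as the main obstacle is already packaged in the known equivalence of the clopen Ramsey theorem with \ATR; what your write-up misses is that this theorem, rather than the minimal bad array lemma, is the engine on both sides of the equivalence.
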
\smallskip

It is easy to realize that \RCA\ suffices to prove that every well-order is
bqo, and even bqo(block) (see \cite[Lemma 3.1]{wqobqo}). Dealing with finite
quasi-orders is however more problematic. Let $n$ denote the partial order
consisting of $n$ mutually incomparable elements, and notice that if $n$ is
bqo, or bqo(block), or bqo(set), then every quasi-order with the same number
of elements has the same property. The following results are from \cite[Lemma
3.2, Theorem 5.11 and Theorem 4.9]{wqobqo}.

\begin{theorem}\label{finitebqo}
\begin{enumerate}
  \item \RCA\ proves that $2$ is bqo and bqo(block);
  \item \ATR\ proves that $3$ is bqo;
  \item for any fixed $n \geq 3$, \RCA\ proves that $3$ is bqo is
      equivalent to $n$ is bqo;
\item for any fixed $n \geq 2$, \RCA\ proves that $n$ is bqo(set) is
    equivalent to \ATR.
\end{enumerate}
\end{theorem}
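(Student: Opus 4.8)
The plan is to establish the four items by a mix of direct combinatorial arguments over \RCA\ and reductions to known equivalences. For item (1), $2$ is bqo is a statement about functions $f\colon B\to\{a,b\}$ with $B$ a barrier; the standard argument is to pull back the two values, obtaining a partition $B=B_a\cup B_b$ into clopen pieces, and to run a short Nash-Williams style minimal-bad-array argument at the very low complexity of a $2$-coloring. Concretely, I would observe that if $f$ is bad then $s\tri t$ forces $f(s)\ne f(t)$ (since $a\preceq a$ and $b\preceq b$ always hold), so following a $\tri$-chain the color must alternate; then using property (2) of barriers along a carefully chosen $X\in\Sq{\base(B)}$ one derives a contradiction. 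The point is that the entire argument only manipulates a single finite partition and a single infinite set produced by property (2), all of which \RCA\ handles; the same argument works verbatim for blocks, giving $2$ is bqo(block).

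For item (2), I would invoke the clopen Ramsey theorem, which is provable in \ATR, to get that every barrier has an infinite subbarrier on which the (finite, hence clopen-coded) partition induced by $f\colon B\to\{a,b,c\}$ is constant — more precisely, one uses the galvin--Prikry/clopen Ramsey machinery to find $X\in\Sq{\base(B)}$ such that $f$ is constant on the subbarrier with base $X$, and on a one-color subbarrier $f$ is trivially good (indeed perfect). In fact this argument shows in \ATR\ that every bqo(set) holds for finite quasi-orders, hence $3$ (and every finite $n$) is bqo; this is essentially the forward direction of item (4). For item (3), the nontrivial direction is that $n$ bqo implies $3$ bqo for $n>3$, which follows because $3$ embeds as a final (indeed any) subset of $n$, and bqo is inherited by subsets — this subset-closure for bqo is an easy \RCA\ fact (restrict $f$ to the preimage of the subset, noting it is still a barrier after passing to the appropriate subbarrier via Lemma \ref{base}); conversely $3$ bqo implies $n$ bqo by an elementary pigeonhole collapsing of $n$ colors down through repeated applications of the $3$-color case, all inside \RCA.

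The substantive content, and the expected main obstacle, is item (4): for fixed $n\ge2$, "$n$ is bqo(set)" is equivalent to \ATR\ over \RCA. The forward implication (\ATR\ proves it) is the clopen Ramsey argument sketched above. The reversal — that $n$ bqo(set) implies \ATR\ — is the delicate part and is exactly where \cite[Theorem 4.9]{wqobqo} (revisiting \cite[Lemma V.9.5]{sosoa}) does the work: one codes an instance of (a statement equivalent to) \ATR, such as $\SI11$-separation or the existence of jump hierarchies along well-orders, into a barrier $B$ and a $2$-coloring $f\colon B\to 2\subseteq n$ in such a way that a perfect subbarrier for $f$ decodes the desired set. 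The hard part will be reproducing this coding faithfully within \RCA\ so that no hidden comprehension creeps in — the construction of the barrier from an ill-founded-looking tree, and the verification that perfectness on a subbarrier yields precisely the separating set, require care with the $\tri$ relation (which, as noted after Definition \ref{tri}, is not transitive) and with the fact that $\base(B)$ need not a priori exist, handled by Lemma \ref{base}. Since even $n=2$ suffices, one gets the full strength already from the two-element antichain, which is the striking point of the theorem.
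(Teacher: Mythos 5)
Your plans for items (2) and (4) do match the cited sources: the forward directions come from the clopen Ramsey theorem applied to the clopen partition of $\Sq{\base(B)}$ induced by $f$ (a constant-color subbarrier is perfect, so \ATR\ proves every finite quasi-order is bqo(set)), and the reversal in (4) is indeed the coding of \SI11-separation into a perfect-subbarrier statement from \cite[Theorem 4.9]{wqobqo}. But items (1) and (3) have genuine gaps. For (1), observing that a bad $f\colon B\to 2$ must alternate colors along a $\tri$-chain is not by itself a contradiction: an infinite alternating chain is a perfectly consistent proper $2$-coloring. The missing idea, which is the entire content of the proof, is that \RCA\ can exhibit a cycle of \emph{odd} length in the graph that $\tri$ induces on any block or barrier; a bad $f\colon B\to 2$ is exactly a proper $2$-coloring of that graph, and an odd cycle rules this out. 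Your appeal to ``a short Nash-Williams style minimal-bad-array argument'' is also off target: by Theorem \ref{mbs} minimality arguments live at the level of \PCA\ and cannot appear in an \RCA\ proof.

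For item (3) you have the two directions backwards, and the one you dismiss as ``elementary pigeonhole collapsing'' is precisely the hard one. That $n$ bqo implies $3$ bqo is the trivial direction (Lemma \ref{RCAsubsetbqo}: $3$ is a sub-antichain of $n$, and a bad array into $3$ is already a bad array into $n$; no passage to preimages or subbarriers is needed). The converse asks you to pass from ``no barrier is properly $3$-colorable'' to ``no barrier is properly $n$-colorable'', and collapsing colors does not do this: if $f\colon B\to n$ is bad and $h\colon n\to 3$ merges color classes, then $h\circ f$ need not be bad, since $f(s)\ne f(t)$ does not place $f(s)$ and $f(t)$ in different classes; and for an arbitrary graph non-$3$-colorability certainly does not imply non-$n$-colorability, so the implication cannot be a soft one. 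The actual argument must exploit the closure of the class of barriers under derived constructions (such as $\set{s\cup t}{s,t\in B,\ s\tri t}$ and its iterates) so as to trade a bad $n$-coloring of one barrier for a bad $3$-coloring of a different barrier; this is where the work of \cite{wqobqo} lies, and it is not recoverable from your sketch.
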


Item \rr{3} above leads to the following question, which was already stated
as Problem 3.3 in \cite{wqobqo}.

\begin{question}\label{q:3bqo}
What is the strength of the statement \lq\lq $3$ is bqo\rq\rq?
\end{question}

Over the years, the author has involved several colleagues in trying to
attack this problem, but no progress has been made. We devote some time to
explain the situation. The $\tri$ relation can be viewed as defining a graph
with the elements of $\Sq \N$ as vertices. The assertion that $n$ is bqo
amounts to state that the subgraph whose set of vertices is a barrier is not
$n$-colorable. Indeed, the proof of item \rr{1} of Theorem \ref{finitebqo}
amounts to the definition within \RCA\ of a cycle of odd length inside any
barrier or block. It is much more difficult to show that a graph is not
$3$-colorable, and this accounts for the increased difficulty in showing that
$3$ is bqo. A first step in beginning to answer Question \ref{q:3bqo} would
be showing that the $\omega$-model \REC\ does not satisfy that every barrier
is $3$-colorable. To this end one cannot use a computable barrier $B$: in
fact being $3$-colorable is an arithmetic property, and hence surely false
for $B$ in \REC. What is needed is some $B \subseteq \Sq \N$ which looks like
a barrier in \REC\ (i.e., which satisfies \rr{1} and \rr{3'} of Definition
\ref{def:barrier} and is such that for every computable $X \in \Sq{\base(B)}$
there exists $s \in B$ with $s \init X$), but is $3$-colorable.\smallskip

Moving now to the basic closure properties of bqos, we start by noticing the
following obvious fact, which mirrors the results of Lemma \ref{RCAsubset}
about wqos.

\begin{lemma}[\RCA]\label{RCAsubsetbqo}
Let $\mathcal{P}$ be any of the properties bqo, bqo(block) or bqo(set). If
$(Q,{\preceq})$ satisfies $\mathcal{P}$ and $R \subseteq Q$ then the
restriction of $\preceq$ to $R$ satisfies $\mathcal{P}$ as well.
\end{lemma}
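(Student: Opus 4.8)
The plan is to show directly, working in \RCA, that if $(Q,{\preceq})$ satisfies one of bqo, bqo(block), or bqo(set) and $R \subseteq Q$, then the restriction of $\preceq$ to $R$ has the same property. In every case the argument is the same: given a barrier (or block) $B$ together with a function $f \colon B \to R$, we view $f$ as a function $B \to Q$ (composing with the inclusion $R \hookrightarrow Q$, which \RCA\ handles since $R$ is a set), apply the hypothesis about $Q$, and then observe that the resulting witness — an $s \tri t$ with $f(s) \preceq f(t)$ in the bqo and bqo(block) cases, or a subbarrier $B' \subseteq B$ on which $f$ is perfect in the bqo(set) case — is automatically a witness for $R$, because the relation $\preceq$ on $R$ is literally the restriction of $\preceq$ on $Q$ and all the relevant quantifiers range over $B$, not over the quasi-order.

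The key steps, in order, are thus: first, note that $f \colon B \to R$ composed with $R \subseteq Q$ is a function $B \to Q$, obtainable by $\mathbf{\Delta}^0_1$ comprehension in \RCA; second, for bqo (resp.\ bqo(block)) apply the defining property of $Q$ to get $s,t \in B$ with $s \tri t$ and $f(s) \preceq f(t)$, and conclude that $f \colon B \to R$ is good because $f(s), f(t) \in R$ and the comparison $f(s) \preceq f(t)$ is exactly the restricted relation; third, for bqo(set) apply the defining property of $Q$ to get a subbarrier $B' \subseteq B$ such that $f \restriction B'$ is perfect with respect to $\preceq$ on $Q$, and note that perfection is again a statement purely about pairs $s \tri t$ in $B'$ and the values $f(s) \preceq f(t)$, which are unchanged. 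Since $B$ and $f$ were arbitrary, $R$ satisfies $\mathcal{P}$.

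There is essentially no obstacle here, which is precisely why Lemma \ref{RCAsubsetbqo} is labelled ``obvious'': unlike the wqo(ext) situation of Lemma \ref{RCAsubset} and Question \ref{wqo(ext)subset}, none of bqo, bqo(block), bqo(set) involves an existential quantifier over auxiliary objects (such as a linear extension) that would need to be transported between $R$ and $Q$; the only thing being quantified externally is the pair $(B,f)$, and that transports trivially. The mild point worth mentioning — the closest thing to a subtlety — is that we are using the hypothesis for $Q$ on the specific block or barrier $B$ supplied for $R$, rather than needing to relate barriers on $R$ to barriers on $Q$; since a barrier is a subset of $\sq\N$ and does not depend on $Q$ at all, this is immediate, and in particular Lemma \ref{base} is not even needed. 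Hence the proof is complete once these observations are recorded.
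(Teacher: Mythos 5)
Your proposal is correct and coincides with the argument the paper treats as immediate (the lemma is introduced as an ``obvious fact'' with no written proof): a function $f\colon B\to R$ is literally a function into $Q$, and goodness (resp.\ perfection on a subbarrier) is a statement about pairs $s\tri t$ in $B$ and values of $f$, which all lie in $R$, so the witness transfers unchanged. Your remark that barriers and blocks are subsets of $\sq\N$ independent of the quasi-order, so no translation between ``barriers for $R$'' and ``barriers for $Q$'' is needed, correctly identifies why this case is trivial in contrast to wqo(ext).
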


Only part of Lemma \ref{+duwqo} has an analogous for bqos.

\begin{lemma}[\RCA]\label{+bqo}
Let $\mathcal{P}$ be any of the properties bqo, bqo(block) or bqo(set). If
$Q_1$ and $Q_2$ satisfy $\mathcal{P}$ then $Q_1 + Q_2$ satisfies
$\mathcal{P}$ with respect to the sum quasi-order.
\end{lemma}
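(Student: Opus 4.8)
The plan is to prove all three cases in parallel by the same combinatorial argument, transferring a hypothetical bad map on $Q_1 + Q_2$ to a bad map on one of the factors. First I would fix a block (resp.\ barrier) $B$ and a map $f\colon B \to Q_1 + Q_2$, which we may assume — by Lemma \ref{base} — has $\base(B) = \N$. Partition $B$ into $B_1 = \set{s \in B}{f(s) \in Q_1}$ and $B_2 = \set{s \in B}{f(s) \in Q_2}$; these exist by $\Delta^0_1$ comprehension since membership in $Q_1$ is decidable. The key point about the sum quasi-order is that any element of $Q_1$ is $\preceq_+$-below any element of $Q_2$; hence if there are $s, t \in B$ with $s \tri t$, $f(s) \in Q_1$ and $f(t) \in Q_2$, then $f$ is already good. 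So we may assume that whenever $s \tri t$, $f(s)$ and $f(t)$ lie in the same $Q_i$; in particular no edge of the $\tri$-graph crosses between $B_1$ and $B_2$.

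The next step is to show that one of $B_1, B_2$ contains (resp.\ is) a block (resp.\ barrier) with base an infinite set $Y$. The relevant observation is that for every $X \in \Sq{\base(B)}$ there is a unique $s \in B$ with $s \init X$, and moving from $X$ to $X^-$ replaces that $s$ by some $t \in B$ with $s \tri t$; since the colouring $i \mapsto$ (the $B_i$ containing the initial segment of $X$) is constant along such moves, it is constant on each "tail class" $\Sq Y$ for $Y$ cofinite in some branch — more precisely, by the argument just given, for any $X$, the function $n \mapsto$ (which $B_i$ contains the initial segment of $X \setminus \{$first $n$ elements$\})$ is eventually constant, and indeed constant from the start because consecutive values agree. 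Thus for each $X$ there is $i_X \in \{1,2\}$ with the initial segment of $X$ in $B_{i_X}$, and $i_X$ depends only on... this is where care is needed: in general $i_X$ need not be constant over all $X$. The correct route, which stays in \RCA, is: if $B_1$ does not already contain a block (resp.\ barrier), then by the block/barrier axioms some $X \in \Sq \N$ has no initial segment in $B_1$, hence its initial segment lies in $B_2$; but then running the "$X \mapsto X^-$" argument shows every $X' \subseteq X$ also has its initial segment in $B_2$, so $B_2 \cap \set{s}{s \subset X}$ is a block (resp.\ barrier) with base $X$. Either way, one of the $B_i$ restricted to an appropriate infinite set is a block (resp.\ barrier), say $B'$ with $f(B') \subseteq Q_i$.

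Finally, since $f \restriction B'$ is a bad map into $Q_i$ (badness is inherited from $f$ because $\tri$-edges within $B'$ are $\tri$-edges of $B$ and the $\preceq_+$-comparisons restricted to $Q_i$ coincide with $\preceq_i$), this contradicts the hypothesis that $Q_i$ is bqo (resp.\ bqo(block)). For the bqo(set) case the same reduction applies: a perfect subbarrier of $B'$ for $\preceq_i$ is a perfect subbarrier of $B$ for $\preceq_+$, so bqo(set)-ness of $Q_1$ and $Q_2$ gives bqo(set)-ness of $Q_1 + Q_2$. The main obstacle is the middle step — extracting, \emph{within \RCA}, a sub-block or sub-barrier from one side of a partition of $B$ that has no $\tri$-edges crossing it; the delicate part is verifying that the "shift along $X \mapsto X^-$" propagates the side-membership to all subsets of a fixed branch $X$, so that one genuinely lands inside $B_1$ or $B_2$ as a block/barrier rather than merely a block-like set. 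This is exactly the kind of argument already used in the proof of Lemma \ref{base}, so the tools are available. Note that the disjoint union quasi-order is deliberately excluded here: the crossing argument fails because $\preceq_{\du}$ makes elements of $Q_1$ and $Q_2$ incomparable, and indeed the closure of bqos under disjoint union is a genuinely harder statement, reflected by the fact that "$2$ is bqo" — itself an instance of closure of the one-point bqo under disjoint union with itself — is already non-trivial.
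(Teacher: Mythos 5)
Your overall strategy -- partition $B$ into $B_1=\set{s\in B}{f(s)\in Q_1}$ and $B_2=\set{s\in B}{f(s)\in Q_2}$ and extract a sub-block/subbarrier on which $f$ has range in a single $Q_i$ -- is exactly the strategy of the paper's proof (i.e.\ of \cite[Lemma 5.14]{wqobqo}). But the combinatorial core of your argument contains a genuine error. In the sum quasi-order only elements of $Q_1$ lie below elements of $Q_2$, so badness of $f$ forbids pairs $s \tri t$ with $f(s)\in Q_1$ and $f(t)\in Q_2$, and \emph{nothing else}: a pair $s\tri t$ with $f(s)\in Q_2$ and $f(t)\in Q_1$ is perfectly compatible with badness. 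Hence your claim that ``no edge of the $\tri$-graph crosses between $B_1$ and $B_2$'' is false, the colouring is not constant along the moves $X\mapsto X^-$ (it can drop from $2$ to $1$ and then must stay at $1$), and your fallback argument propagates $B_2$-membership \emph{forward} along $X, X^-, X^{--},\dots$, which is precisely the direction in which nothing propagates. There is a second, independent problem with that step: even if $B_2$-membership did propagate along that chain, the chain only visits the initial segments of the tails of $X$ (for $B=[\N]^2$ these are just the pairs of consecutive elements of $X$), which is far from all of $\set{s\in B}{s\subset X}$; so you would not obtain a sub-block contained in $B_2$ anyway.

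The repair exploits the one-sidedness instead of fighting it: $B_1$ is \emph{absorbing}. If $B_1=\emptyset$ then $f$ already maps into $Q_2$. Otherwise fix $s^*=\langle a_0,\dots,a_k\rangle\in B_1$ and let $Y=\set{n\in\base(B)}{n>a_k}$. Given any $t\in B$ with $t\subset Y$, set $Z_0=s^*\cup t\cup\set{n\in\base(B)}{n>\max t}$ and $Z_{j+1}=Z_j^-$, and let $s_j$ be the unique element of $B$ with $s_j\init Z_j$ (existence by clause (2) of Definition \ref{def:barrier}, uniqueness by clause (3)). Then $s_0=s^*$, $s_j\tri s_{j+1}$ for each $j$, and $s_{k+1}=t$; since $f$ is bad, $s_j\in B_1$ forces $s_{j+1}\in B_1$, so by induction along this finite chain $t\in B_1$. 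Thus $\set{s\in B}{s\subset Y}$ is a sub-block (subbarrier) on which $f$ has range in $Q_1$, and its restriction there is bad for $\preceq_1$, giving the contradiction; this is carried out entirely in \RCA\ and handles bqo and bqo(block). For bqo(set) note that your reduction is phrased as a contradiction from a bad array, whereas bqo(set) requires producing a perfect subbarrier for an \emph{arbitrary} $f$, so that case needs a separate short argument rather than the sentence you give. Your closing remark contrasting the sum with the disjoint union is correct and well taken.
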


When $\mathcal{P}$ is bqo this is \cite[Lemma 5.14]{wqobqo}. The proof shows
that for any $f: B \to Q_1 + Q_2$ there is a subbarrier $B'$ such that the
restriction of $f$ to $B'$ has range in $Q_i$ for some $i$: this yields the
result also when $\mathcal{P}$ is bqo(set). Moreover the proof works also for
blocks, thus taking care of the case when $\mathcal{P}$ is bqo(block).

The closure under disjoint unions of bqos is much stronger than the
corresponding property for wqos. In fact we have

\begin{lemma}[\RCA]\label{du=cup=times}
Let $\mathcal{P}$ be any of the properties bqo, bqo(block) or bqo(set). The
following are equivalent:
\begin{enumerate}[\quad (i)]
    \item if $Q_1$ and $Q_2$ satisfy $\mathcal{P}$ then $Q_1 \du Q_2$
        satisfies $\mathcal{P}$ with respect to the disjoint union
        quasi-order;
    \item if $Q$ satisfies $\mathcal{P}$ with respect to the quasi-orders
        $\preceq_1$ and $\preceq_2$ then $Q$ satisfies $\mathcal{P}$ with
        respect to the intersection quasi-order;
    \item if $Q_1$ and $Q_2$ satisfy $\mathcal{P}$ then $Q_1 \times Q_2$
        satisfies $\mathcal{P}$ with respect to the product quasi-order.
\end{enumerate}
All these statements are provable in \ATR. When $\mathcal{P}$ is bqo or
bqo(block) they imply \ACA, when $\mathcal{P}$ is bqo(set) they are
equivalent to \ATR.
\end{lemma}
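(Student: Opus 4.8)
The plan is to establish the three-way equivalence by the same routing used in Lemma \ref{cup=times} for wqos, and then layer on the reverse-mathematical strength bounds. For the equivalence of \rr{i}, \rr{ii}, \rr{iii}: the implication \rr{ii}$\Rightarrow$\rr{iii} is the observation that if $Q_1,Q_2$ satisfy $\mathcal P$, then on the set $Q_1\times Q_2$ the product quasi-order is the intersection of the two quasi-orders pulled back via the projections $\pi_1,\pi_2$ (i.e. $(x_1,x_2)\preceq_i (y_1,y_2)\iff x_i\preceq_i y_i$); each of these pullbacks is a quasi-order under which $Q_1\times Q_2$ inherits $\mathcal P$ from $Q_i$ (for bqo and bqo(block) this is because a bad sequence through $\pi_i$ would compose with $\pi_i$ to a bad sequence in $Q_i$; for bqo(set) one passes to the subbarrier directly), so \rr{ii} applies. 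The implication \rr{iii}$\Rightarrow$\rr{i} uses that $Q_1\du Q_2$ embeds into $(Q_1+1)\times(1+Q_2)$ — or more simply that $\du$ sits inside $\times$ as a subset — together with Lemma \ref{RCAsubsetbqo}. Finally \rr{i}$\Rightarrow$\rr{ii}: given $\preceq_1,\preceq_2$ on $Q$, form $Q_1\du Q_2$ with $Q_i$ a disjoint copy of $Q$ carrying $\preceq_i$; if $B$ is a barrier and $g:B\to Q$ is bad for $\preceq_\cap$, split $B$ by parity of some fixed odd cycle (the $2$-bqo witness from Theorem \ref{finitebqo}\rr{1}) to get two "halves" that we feed into the two copies, producing a bad map into $Q_1\du Q_2$; the details here are exactly the ones that make Lemma \ref{cup=times} work, carried over verbatim since all the relevant operations on barriers and blocks are available in \RCA.

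For the strength bounds, provability in \ATR\ is the cleanest: \ATR\ proves "every bqo is bqo(set)" and closure of bqo(set) under product is given by Lemma \ref{wqo(set)prod} (its statement includes $\mathcal P$ being any of bqo, bqo(block), bqo(set)), so \rr{iii} for bqo and bqo(block) follows, and then the equivalence just established gives \rr{i} and \rr{ii}. For the lower bound when $\mathcal P$ is bqo(set), we want the equivalence with \ATR: one direction is the preceding paragraph, and for the reversal one shows that "$Q_1\du Q_2$ bqo(set) whenever $Q_1,Q_2$ are bqo(set)" implies "every bqo is bqo(set)" — apply the closure statement with, say, $Q_2$ a one-point order (which is trivially bqo(set)) is not enough, so instead one leverages the fact that $2$ is bqo(set) is already equivalent to \ATR\ by Theorem \ref{finitebqo}\rr{4}, and $2=1\du 1$ with each $1$ bqo(set) in \RCA; thus the closure statement, applied to $Q_1=Q_2=1$, already yields "$2$ is bqo(set)", hence \ATR.

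For the bqo and bqo(block) cases we only claim the weaker lower bound \ACA. The idea is to reduce to the known fact (\cite[Lemma 1.4]{wqobqo}) that the existence of $\base(B)$ for arbitrary $B\subseteq\sq\N$ is equivalent to \ACA, or alternatively to reduce from a standard \ACA-complete statement such as the range principle: one codes an injection $h:\N\to\N$ into a pair of quasi-orders on a common domain, both bqo (indeed both well-orders, which are bqo in \RCA), whose intersection fails to be bqo unless the range of $h$ exists; since failure of the intersection to be bqo is witnessed by a bad map on a barrier, and the construction arranges that such a witness is arithmetically definable from the range of $h$ while its nonexistence forces the range to exist, the closure statement yields \ACA. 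The main obstacle I expect is precisely this last construction: bqo-ness is a $\PI12$-flavoured condition and producing a pair of bqos whose intersection's failure is exactly calibrated to an \ACA-instance — neither accidentally stronger (jumping to \ATR) nor weaker — requires care in choosing the barrier on which the bad map lives and in checking, within \RCA, that the two component orders really are bqo and not merely wqo; I would model this on the coding already implicit in the proof that $n$ bqo(set) $\Leftrightarrow$ \ATR\ and in the \ACA-level arguments of \cite{defwqo}, adapting them so that the "set" quantifier is dropped at the cost of dropping from \ATR\ to \ACA.
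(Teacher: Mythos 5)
Your overall architecture (a cycle of implications, the \ATR\ upper bound via ``every bqo is bqo(set)'', and the reversal for bqo(set) by applying \rr{i} to $Q_1=Q_2=1$ to get ``$2$ is bqo(set)'' and invoking Theorem \ref{finitebqo}\rr{4}) matches the paper, and the last of these is exactly the paper's argument. But there is a genuine gap at the one implication that carries all the content: getting \emph{out} of \rr{i}. Your proposed \rr{i}$\Rightarrow$\rr{ii} --- ``split $B$ by parity of some fixed odd cycle \dots to get two halves that we feed into the two copies'' --- does not work. Badness of $g:B\to Q$ for $\preceq_\cap$ only tells you that for each pair $s \tri t$ at least one of $g(s)\npreceq_1 g(t)$, $g(s)\npreceq_2 g(t)$ holds; to produce a bad map into $Q_1\du Q_2$ you would need a partition of the \emph{elements} of $B$ into two classes such that within each class $i$ the map is bad for $\preceq_i$, and there is no \RCA-definable way to pass from the edgewise disjunction to such a vertexwise partition (the odd-cycle argument behind ``$2$ is bqo'' points in the opposite direction: it produces $s\tri t$ in the \emph{same} class). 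The paper's route is \rr{i}$\Rightarrow$\rr{iii} via Theorem \ref{RCAbqoflatsharp}: send $(x_1,x_2)\in Q_1\times Q_2$ to the two-element set $\{x_1,x_2\}\subseteq Q_1\du Q_2$; because elements of $Q_1$ and $Q_2$ are incomparable in the disjoint union, the product order is exactly the restriction of $\preceq^\flat$ to these sets, so $Q_1\times Q_2$ embeds into $\bigl(\Pf{(Q_1\du Q_2)},\preceq^\flat\bigr)$, which is bqo by \rr{i} together with the \RCA-provable Theorem \ref{RCAbqoflatsharp}. Without this (or an equivalent device) your equivalence collapses to the one-way chain \rr{ii}$\Rightarrow$\rr{iii}$\Rightarrow$\rr{i}.

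Two smaller points. First, your embedding of $Q_1\du Q_2$ into $(Q_1+1)\times(1+Q_2)$ realizes the \emph{sum} order, not the disjoint union (an element of $Q_1$ lands below every element of $Q_2$); you want something like $(Q_1+1)\times(Q_2+1)$ with $x\mapsto(x,\top_2)$ and $y\mapsto(\top_1,y)$, and the aside that $\du$ ``sits inside $\times$ as a subset'' is not literally true. Second, Lemma \ref{wqo(set)prod} concerns wqo(set), not bqo(set), so it cannot be cited for the \ATR\ upper bound; the intended argument (pass to a subbarrier perfect for the first coordinate, then a further subbarrier perfect for the second) is fine but must be carried out directly, and is in effect the clopen Ramsey theorem argument the paper alludes to. Your \ACA\ reversal for bqo and bqo(block) is only a sketch of intent; the paper obtains it from the proof of Theorem \ref{ACAHig} (the Higman-style coding), not from a fresh pair of well-orders, so this part would still need to be supplied.
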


The equivalence between the three statements for bqo is Lemma 5.16 of
\cite{wqobqo}: the implication from \rr{i} to \rr{iii} uses Theorem
\ref{RCAbqoflatsharp} below. The same proof works also for bqo(block) and
bqo(set). Provability in \ATR\ follows easily from the clopen Ramsey theorem.
The implication towards \ACA\ is Lemma 5.17 of \cite{wqobqo} (which uses the
proof of Theorem \ref{ACAHig} below) when we are dealing with bqos, and works
also for bqo(block). The implication towards \ATR\ is immediate from item
\rr{4} of Theorem \ref{finitebqo} because \rr{i} for bqo(set) implies that
$2$ is bqo(set).

\begin{question}\label{q:closurebqodu}
What is the strength of statements \rr i--\rr{iii} of Lemma
\ref{du=cup=times} when $\mathcal{P}$ is bqo or bqo(block)?
\end{question}

Since the statements imply \ACA, by Lemma \ref{bqo->bqo(block)} there is a
single answer for bqo and bqo(block). Since \rr i for bqo implies that $3$ is
bqo, Questions \ref{q:closurebqodu} and \ref{q:3bqo} are connected.

\section{Minimality arguments}\label{sec:mb}
One of the main tools of wqo theory is the minimal bad sequence lemma
(apparently isolated for the first time in \cite{NW63}). The idea is to prove
that a quasi-order is wqo by showing that if there exists a bad sequence then
there is one with a minimality property, and eventually reaching a
contradiction from the latter assumption. To state the lemma in its general
form we need the following definitions.

\begin{definition}[\RCA]\label{compatible}
Let $(Q,{\preceq})$ be a quasi-order. A transitive binary relation $<'$ on
$Q$ is \emph{compatible with $\preceq$} if for every $x,y \in Q$ we have that
$x <' y$ implies $x \preceq y$. We write $x \leq' y$ for $x <' y \lor x = y$.
In this situation, if $A, A' \in \Sq \N$, $f: A \to Q$, and $f': A' \to Q$ we
write $f \leq' f'$ if $A \subseteq A'$ and $\forall n \in A\; f(n) \leq'
f'(n)$; we write $f <' f'$ if $f \leq' f'$ and $\exists n \in A\, f(n) <'
f'(n)$. $f$ is \emph{minimal bad with respect to $<'$} if it is bad with
respect to $\preceq$ and there is no $f' <' f$ which is bad with respect to
$\preceq$.
\end{definition}

\begin{stat}[minimal bad sequence lemma]
Let $(Q,{\preceq})$ be a quasi-order and $<'$ a well-founded relation which
is compatible with $\preceq$: if $A' \in \Sq \N$ and $f': A' \to Q$ is bad
with respect to $\preceq$ then there exists $f: A \to Q$ such that $f \leq'
f'$ and $f$ is minimal bad with respect to $<'$.
\end{stat}

The generalization of the minimal bad sequence lemma to bqos is known as the
minimal bad array lemma (the maps of definition \ref{good/bad} are sometimes
called arrays) or the forerunning technique (this method was explicitly
isolated and clarified in \cite{Laver78}). Again, we need some preliminary
definitions.

\begin{definition}[\RCA]
Let $(Q,{\preceq})$ be a quasi-order and $<'$ be compatible with $\preceq$ in
the sense of definition \ref{compatible}. If $B$ and $B'$ are barriers, $f: B
\to Q$, and $f': B' \to Q$ we write $f \leq' f'$ if $\base (B) \subseteq
\base (B')$, and for every $s \in B$ there exists $s' \in B'$ such that $s'
\initeq s$ and $f(s) \leq' f'(s')$. We write $f <' f'$ if $f \leq' f'$ and
for some $s \in B$, $s' \in B'$ with $s' \initeq s$ we have $f(s) <' f'(s')$.
$f$ is \emph{minimal bad with respect to $<'$} if it is bad with respect to
$\preceq$ and there is no $f' <' f$ which is bad with respect to $\preceq$.
\end{definition}

\begin{stat}[minimal bad array lemma]
Let $(Q,{\preceq})$ be a quasi-order and $<'$ a well-founded relation which
is compatible with $\preceq$. If $B'$ is a barrier and $f': B' \to Q$ is bad
with respect to $\preceq$ then there exist a barrier $B$ and $f: B \to Q$
such that $f \leq' f'$ and $f$ is minimal bad with respect to $<'$.
\end{stat}

A milder generalization of the minimal bad sequence lemma is also useful: it
was actually the first version of the minimal bad array lemma proved for a
specific quasi-order by Nash-Williams in \cite{NW65b} and was isolated in
\cite{fbqo}.

\begin{definition}[\RCA]
Let $(Q,{\preceq})$ be a quasi-order and $<'$ be compatible with $\preceq$ in
the sense of definition \ref{compatible}. If $B$ and $B'$ are barriers, $f: B
\to Q$, and $f': B' \to Q$ we write $f \leq'_\ell f'$ if $B \subseteq B'$ and
$\forall s \in B\; f(s) \leq' f'(s)$. We write $f <'_\ell f'$ if $f
\leq'_\ell f'$ and $\exists s \in B\; f(s) <' f'(s)$. $f$ is \emph{locally
minimal bad with respect to $<'$} if it is bad with respect to $\preceq$ and
there is no $f' <'_\ell f$ which is bad with respect to $\preceq$.
\end{definition}

\begin{stat}[locally minimal bad array lemma]
Let $(Q,{\preceq})$ be a quasi-order and $<'$ a well-founded relation which
is compatible with $\preceq$: if $B'$ is a barrier and $f': B' \to Q$ is bad
with respect to $\preceq$ then there exist a barrier $B$ and $f: B \to Q$
such that $f \leq'_\ell f'$ and $f$ is locally minimal bad with respect to
$<'$.
\end{stat}

The minimal bad sequence lemma and the locally minimal bad array lemma have
been shown to be equivalent to the strongest of the big five by Simpson and
Marcone in \cite[Theorem 6.5]{NWT}.

\begin{theorem}[\RCA]\label{mbs}
The following are equivalent:
\begin{enumerate}[\quad (i)]
  \item \PCA;
  \item the minimal bad sequence lemma;
  \item the locally minimal bad array lemma.
\end{enumerate}
\end{theorem}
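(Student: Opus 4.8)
The plan is to prove Theorem \ref{mbs} by establishing the cycle of implications $\rr{i} \Rightarrow \rr{iii} \Rightarrow \rr{ii} \Rightarrow \rr{i}$, working throughout over \RCA. The implication $\rr{iii} \Rightarrow \rr{ii}$ should be the easy direction: a sequence $f': A' \to Q$ can be regarded as an array over the barrier $[A']^1$ (using that $\langle m \rangle \tri \langle n \rangle$ iff $m <_\N n$), the two notions of compatibility and of $<'$ coincide under this identification, and $\leq'_\ell$ on $[A']^1$-arrays matches $\leq'$ on sequences; so a locally minimal bad array over $[A']^1$ is exactly a minimal bad sequence below $f'$. One has to check that the barrier $B$ produced by the locally minimal bad array lemma, being $\subseteq [A']^1$, is itself of the form $[A]^1$ for some infinite $A \subseteq A'$, which is immediate from the definition of subbarrier.

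For $\rr{i} \Rightarrow \rr{iii}$ I would run the standard forerunning/minimal-bad-array construction and verify it goes through in \PCA. Given a well-founded $<'$ compatible with $\preceq$ and a bad $f': B' \to Q$, one builds a decreasing (under $<'_\ell$) transfinite sequence of bad arrays, at each stage passing to a subbarrier and lowering some value in the $<'$-ordering, and argues that well-foundedness of $<'$ forces the process to halt at a locally minimal bad array. The key points requiring care in the weak system are: (a) the partial orders $\leq'$ and $\leq'_\ell$ are $\PI11$ (they quantify over elements of barriers, hence over infinite sets), so the set of bad arrays below $f'$ together with the relation $<'_\ell$ forms a $\PI11$ structure whose well-foundedness (inherited from that of $<'$) must be exploited; and (b) selecting, at each step, a "next" bad array that decreases at a prescribed place is a choice that must be made uniformly — this is where $\PI11$ comprehension enters, letting us form the tree of attempted descending sequences of bad arrays and extract a minimal one. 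The construction is essentially the one in \cite[Theorem 6.5]{NWT}, and the work is to confirm no stronger comprehension than $\PI11$ is used.

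The hard direction, and the main obstacle, is the reversal $\rr{ii} \Rightarrow \rr{i}$: from the minimal bad sequence lemma one must derive $\PI11$ comprehension. The strategy is to encode an arbitrary $\PI11$ set into a wqo-theoretic question whose resolution via a minimal bad sequence yields the set. Concretely, given a $\PI11$ formula $\varphi(n) \equiv \forall X\, \theta(n,X)$ with $\theta$ arithmetical, one codes each instance by a tree $T_n \subseteq \Seq$ so that $\varphi(n)$ holds iff $T_n$ is well-founded, and one builds a quasi-order $Q$ — with a natural compatible well-founded relation $<'$ coming from the tree structure (e.g. the Kleene–Brouwer order or the subtree relation) — together with a bad sequence $f'$, arranged so that a minimal bad sequence below $f'$ must "decide" for each $n$ whether $T_n$ is well-founded, its values recording exactly the characteristic function of $\set{n}{\varphi(n)}$. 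Extracting that set from the minimal bad sequence uses only arithmetical comprehension. Making the coding robust — ensuring $f'$ is genuinely bad, that minimality is attainable, and that the minimal witness really reads off $\varphi$ rather than some approximation — is the delicate part; this is the construction carried out in \cite{NWT}, and I would follow it, checking that every step is available in \RCA\ plus the minimal bad sequence lemma. One then notes $\rr{i} \Rightarrow \rr{ii}$ is already covered by the cycle (or can be cited directly from \cite[Theorem 6.5]{NWT} / the general minimality machinery in \PCA), completing the equivalences.
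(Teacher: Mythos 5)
First, a point of comparison: the survey does not prove Theorem \ref{mbs} at all --- it simply cites \cite[Theorem 6.5]{NWT} --- so your proposal is being measured against the argument in that reference rather than against anything in this paper. Your overall decomposition (a cycle \rr{i} $\Rightarrow$ \rr{iii} $\Rightarrow$ \rr{ii} $\Rightarrow$ \rr{i}) and your treatment of \rr{iii} $\Rightarrow$ \rr{ii} via the identification of sequences on $A'$ with arrays on the barrier $[A']^1$ are exactly right and match the standard route. The two hard directions, however, are sketched with mechanisms that do not work as described.

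For \rr{i} $\Rightarrow$ \rr{iii} you propose to build a $<'_\ell$-descending transfinite sequence of bad arrays and to ``argue that well-foundedness of $<'$ forces the process to halt.'' This is false: well-foundedness of $<'$ on $Q$ does not make $<'_\ell$ (or $<'$) well-founded on bad arrays, because each step may lower the value at a different place. For instance, take $Q=\N\times\{0,1\}$ with $(m,i)\preceq(n,j)$ iff $m=n$ and $i\leq j$, and $(n,0)<'(n,1)$; the sequences $f_k(n)=(n,0)$ for $n<k$ and $(n,1)$ otherwise are all bad and form an infinite strictly $<'$-descending chain although $<'$ has height $2$. The proof in \cite{NWT} does not descend among bad arrays; it constructs the (locally) minimal bad array directly, one barrier element at a time, at each stage choosing a $<'$-minimal value among those for which the partial array so far still extends to a bad array $\leq'_\ell f'$. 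Extendibility to a bad array is a $\Sigma^1_1$ condition, and \PCA\ is used precisely to form these $\Sigma^1_1$-definable subsets of $(Q,<')$, pick $<'$-minimal elements of them, and make the countably many choices coherently. For the reversal \rr{ii} $\Rightarrow$ \rr{i}, your suggestion to take $<'$ to be ``the Kleene--Brouwer order or the subtree relation'' on the coding trees $T_n$ cannot work as stated: those relations are well-founded exactly when $T_n$ is well-founded, which is the very $\Pi^1_1$ fact one is trying to decide, so they cannot serve as the relation $<'$ that the hypothesis of the minimal bad sequence lemma requires to be \emph{provably} well-founded. The reversal in \cite{NWT} instead routes through the leftmost path principle: the quasi-order and the compatible $<'$ are arranged, verifiably in \RCA, so that bad sequences below $f'$ correspond to branches through the given trees and a minimal bad sequence yields leftmost paths uniformly, from which \PCA\ follows. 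Since both of your hard directions ultimately defer to ``the construction carried out in \cite{NWT},'' the proposal is closer to a reading plan than to a proof, and the two mechanisms you do spell out are the ones that would need to be replaced.
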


On the other hand, the proofs of the minimal bad array lemma use very strong
set-existence axioms: a crude analysis shows that they can be carried out
within \PPCA.

\begin{question}
What is the axiomatic strength of the minimal bad array lemma?
\end{question}

\section{Structural results}\label{sec:struct}
In this section we consider theorems showing that wqos satisfy specific
properties as partial orders.

The better known of these theorems is due to de Jongh and Parikh \cite{JP}
(an exposition of essentially the original proof appears in
\cite[\S8.4]{Harz}; a proof based on the study of the partial order of the
initial segments of the wqo is included in \cite[\S4.11]{Fra00}; proofs with
a strong set-theoretic flavor appear as \cite[Theorem 4.7]{KT} and
\cite[Proposition 52]{BG}).

\begin{stat}[maximal linear extension theorem]
If $(Q,{\preceq})$ is wqo, then there exist a linear extension $\preceq_L$ of
$Q$ which is maximal, meaning that every linear extension of $Q$ embeds in an
order-preserving way into $\preceq_L$.
\end{stat}

A less known result is due to Wolk (\cite[Theorem 9]{Wolk}, actually Wolk's
statement is slightly stronger) and also appears as \cite[Theorem 4.9]{KT}
and \cite[Theorem 8.1.7]{Harz}.

\begin{stat}[maximal chain theorem]
If $(Q,{\preceq})$ is wqo, then there exist a chain $C \subseteq Q$ which is
maximal, meaning that every chain contained in $Q$ embeds in an
order-preserving way into $C$.
\end{stat}

Marcone and Shore \cite{MS11} studied the strength of the maximal linear
extension theorem and of the maximal chain theorem.

\begin{theorem}[\RCA]\label{MLE}
The following are equivalent:
\begin{enumerate}[\quad (i)]
  \item \ATR;
  \item the maximal linear extension theorem;
  \item the maximal chain theorem.
\end{enumerate}
\end{theorem}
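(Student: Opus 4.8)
The plan is to prove Theorem~\ref{MLE} by establishing a cycle of implications: \ATR\ proves both the maximal linear extension theorem and the maximal chain theorem, and each of these theorems implies \ATR\ over \RCA. The forward directions are essentially formalizations of the classical proofs, while the reversals are the heart of the matter and proceed by coding well-ordering comparability (or an equivalent \SI11-complete problem) into instances of the theorems.

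First I would handle the forward directions. For the maximal linear extension theorem, I would work in \ATR\ and use the fact (available at this level) that one can iterate arithmetical comprehension along well-orders. The classical de Jongh--Parikh argument builds $\preceq_L$ by a transfinite recursion: given a wqo $(Q,\preceq)$ one considers the tree of bad sequences, or alternatively defines, by recursion on an ordinal, the linear extension whose order type is the ``height'' function of $Q$ into the ordinals. Since $Q$ is wqo the relevant recursion is along a well-order, so \ATR\ can carry it out and verify maximality by induction. The maximal chain theorem is handled similarly, following Wolk's argument: one ranks the elements of $Q$ by a transfinite-recursive height function and extracts a chain realizing the supremum of chain-lengths; again this is a recursion along a well-order, hence available in \ATR.

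For the reversals I would use the standard strategy for proving theorems equivalent to \ATR, namely reducing to \emph{comparability of well-orders} ($\SI11$-separation or the statement that any two well-orders are comparable), which is equivalent to \ATR\ over \RCA\ by \cite[Theorem V.5.1]{sosoa}. Given well-orders $\alpha$ and $\beta$, I would build a wqo $Q$ (e.g.\ a suitable partial order assembled from $\alpha$ and $\beta$, such as a disjoint-union-like construction with carefully placed comparabilities) so that the maximal linear extension must ``decide'' which of $\alpha,\beta$ embeds into the other, thereby yielding their comparability. A cleaner route, likely the one to adopt, is to code an arbitrary ill-founded-or-well-founded tree: from a tree $T$ one manufactures a wqo $Q_T$ whose maximal linear extension has order type encoding the rank of $T$ if $T$ is well-founded, and forcing a contradiction otherwise, so that extracting the maximal linear extension lets one compute the Kleene--Brouwer ranking. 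For the maximal chain theorem I would do the analogous coding, exploiting that in the constructed $Q$ the maximal chain length equals an ordinal that \RCA\ plus the theorem can then compare against given well-orders.

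The main obstacle I expect is the reversal, specifically verifying \emph{maximality} reverses to \ATR\ rather than merely to \ACA. Existence of \emph{some} linear extension or \emph{some} maximal-looking chain is typically weak; the strength comes from the universal quantifier ``every linear extension embeds into $\preceq_L$'', which is a $\PI11$-flavored condition. The delicate point is designing the coding wqo so that this embedding property is equivalent to a genuine comparison of well-orders, and then checking that \RCA\ alone (plus the theorem as a black box) can extract from the promised embedding the data needed to run the comparability argument. Getting the bookkeeping right so that no hidden use of stronger comprehension creeps in—and so that the construction of the coding partial order is itself carried out in \RCA—will be the technically demanding part; I would model it closely on the \ATR-reversals for Ulm's theorem and for other structural statements in \cite{sosoa} and in \cite{MS11}.
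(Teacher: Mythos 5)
Your forward direction matches the paper's: \ATR\ computes the rank function of the tree of finite bad sequences $\Bad(Q)$ (resp.\ the tree of descending sequences $\Desc(Q)$) and builds the maximal linear extension (resp.\ maximal chain) by recursion on that rank; and your first idea for the reversal of the maximal chain theorem --- reducing to comparability of well-orders --- is exactly what the paper does, and it is indeed ``very simple'' there. The genuine gap is in the reversal of the maximal linear extension theorem. The route you single out as ``likely the one to adopt'' --- manufacturing a genuine wqo $Q_T$ from a tree $T$ so that the maximal linear extension encodes the rank of $T$, or more generally deriving strength by applying the theorem to honest wqos --- provably cannot work: by Montalb\'an's theorem \cite{Mont07} every computable wqo has a computable maximal linear extension, so the maximal linear extension theorem applied to partial orders that really are wqos can never yield anything unprovable in \RCA. (Note the asymmetry with the chain case, where direct coding is available: by \cite[Theorem 3.3]{MMS} there are computable wqos with no hyperarithmetically computable maximal chain.)

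What the paper does instead is a two-step argument in which the partial orders fed to the theorem are \emph{not} really wqos, but are wqos only under the assumption that some comprehension axiom fails. First one bootstraps to \ACA: take a partial order $Q$ such that any bad sequence through $Q$ computes a jump; if \ACA\ fails then $Q$ is wqo, and the existence of a maximal linear extension of $Q$ leads to a contradiction, so \ACA\ must hold. Then, arguing within \ACA\ and assuming \ATR\ fails, Shore's Theorem \ref{Shore} yields an infinite sequence of countable well-orders with no two distinct comparable members, from which one assembles a partial order $Q'$ that is wqo (because \ATR\ fails) but admits no maximal linear extension. Your closing worry about whether ``maximality reverses to \ATR\ rather than merely to \ACA'' points at the right difficulty, but without the device of fake wqos and the \ACA\ bootstrap the plan as stated cannot be completed.
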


The proofs of the two theorems within \ATR\ differ from the proofs found in
the literature: to avoid using more induction than available in \ATR\ one
fixes a wqo $Q$ and looks respectively at the \emph{tree of finite bad
sequences in $Q$}
\[
\Bad(Q) = \set{s \in \Qom}{\forall i,j < \lh s (i<j \rightarrow s(i) \npreceq s(j))}
\]
and at the \emph{tree of descending sequences in $Q$}
\[
\Desc(Q) = \set{s \in \Qom}{\forall i,j < \lh s (i<j \rightarrow s(j) \prec s(i))}.
\]
(Here $\Qom$ is the set of finite sequences of elements of $Q$.) Since $Q$ is
wqo both these trees are well-founded and \ATR\ can compute their rank
functions. Focusing on the maximal linear extension theorem (the other proof
follows the same strategy), by recursion on the rank of $s \in \Bad(Q)$ we
assign to $s$ a maximal linear extension of the restriction of $\preceq$ to
$\set{x \in Q}{s \conc \langle x \rangle \in \Bad(Q)}$; when $s$ is the empty
sequence we have the maximal linear extension of $Q$.

The two reversals contained in Theorem \ref{MLE} have quite different proofs.
The proof that the maximal chain theorem implies \ATR\ is very simple (using
the well-known equivalence between \ATR\ and comparability of well-orders),
while the proof that the maximal linear extension theorem implies \ATR\ is
more involved. In fact there is first a bootstrapping, showing that the
maximal linear extension theorem implies \ACA. To this end it is useful a
partial order $Q$ such that the existence of any bad sequence in $Q$ implies
\ACA: thus if \ACA\ fails then $Q$ is wqo, we can apply the theorem and reach
a contradiction from the existence of a maximal linear extension. We can now
argue within \ACA\ and, assuming the failure of \ATR\ and using Theorem
\ref{Shore} below, build a wqo $Q'$ which cannot have a maximal linear
extension. The difference of the two proofs is no accident. In fact a theorem
of Montalb\'{a}n \cite{Mont07} states that every computable wqo has a computable
maximal linear extension (this implies that in showing that the maximal
linear extension theorem implies anything unprovable in \RCA\ the use of
partial orders that are not really wqos is unavoidable), while Marcone,
Montalb\'{a}n and Shore \cite[Theorem 3.3]{MMS} showed that for every
hyperarithmetic set $X$ there is a computable wqo $Q$ with no $X$-computable
maximal chain.\smallskip

Another kind of structural theorems about quasi-orders concerns the
decomposability of the quasi-order in finite pieces which are simple.

\begin{definition}[\RCA]
Let $(Q,{\preceq})$ be a quasi-order. $I \subseteq Q$ is an \emph{ideal} if
\begin{itemize}
  \item $\forall x,y \in Q (x \in I \land y \preceq x \rightarrow y \in
      I)$;
  \item $\forall x,y \in I\, \exists z \in I (x \preceq z \land y \preceq
      z)$.
\end{itemize}
\end{definition}

Bonnet \cite[Lemma 2]{Bon73} (see also \cite[\S4.7.2]{Fra00}) proved that a
partial order has no infinite antichains if and only if every initial
interval is a finite union of ideals (this result follows also from
\cite[Theorem 1]{ErdTar43}). In \cite[Theorem 4.5]{init} Frittaion and
Marcone studied the left to right direction of Bonnet's result and proved,
among other things, the following equivalence.

\begin{theorem}[\RCA]
The following are equivalent:
\begin{enumerate}[\quad (i)]
  \item \ACA;
  \item every wqo is a finite union of ideals.
\end{enumerate}
\end{theorem}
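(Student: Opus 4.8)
The plan is to prove both directions over \RCA. For the direction \rr{i} $\Rightarrow$ \rr{ii}, I would argue in \ACA\ as follows. Fix a wqo $(Q,{\preceq})$. By the Lemma of Section \ref{sec:charwqo}, $Q$ has the finite basis property, so $Q$ itself is covered by finitely many principal final segments; the real content is to show each initial interval decomposes into finitely many ideals, and it suffices to do this for $Q$ (the restriction of $\preceq$ to an initial interval is again a wqo by Lemma \ref{RCAsubset}). First I would note that since $Q$ is wqo it has no infinite antichains, hence the maximal antichains of $Q$ are all finite and their sizes are bounded (otherwise one could assemble, using \ACA, an infinite antichain). Using arithmetical comprehension I would form the set of "cones" $D_x = \set{y \in Q}{y \perp x \lor y \preceq x}$ and argue that an initial interval generated by a maximal antichain $\{a_1,\dots,a_k\}$ splits as a union of pieces each of which is \emph{directed}; the key point is Bonnet's combinatorial argument, that in a partial order with no infinite antichain every initial interval is a finite union of ideals, and what \ACA\ buys us is precisely the ability to (a) extract infinite antichains when maximal antichains fail to be bounded, and (b) form the various arithmetically-defined auxiliary sets (the base of the relevant block, the closure of a set under $\preceq$-predecessors, etc.). I expect the main obstacle here to be carrying out Bonnet's induction-on-the-number-of-ideals argument with only the induction available in \ACA; the standard move, as in the \ATR\ proofs of Theorem \ref{MLE}, is to avoid an induction on an external parameter by working with a tree of bad sequences or antichains in $Q$, which is well-founded because $Q$ is wqo, and letting \ACA\ compute its rank — the decomposition is then read off by recursion on that rank rather than by naive induction.

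For the reversal \rr{ii} $\Rightarrow$ \rr{i}, the plan is to reduce to a standard \RCA-equivalent of \ACA, namely that the range of an injection $g : \N \to \N$ exists as a set. Given such a $g$, I would build a quasi-order $Q$ in which the existence of a finite-union-of-ideals decomposition computes $\mathrm{range}(g)$. Concretely: take $Q$ to have an element $\infty$ below which sit countably many chains $C_0, C_1, \dots$, where the chain $C_n$ is a single point if $n \notin \mathrm{range}(g)$ and is a copy of $\omega^{*}$ (or just an infinite descending sequence) otherwise — but arranged so that whether $C_n$ is "short" or "long" is only semidecidable, so that $Q$ is provably wqo in \RCA\ (it has no infinite descending sequence and no infinite antichain, checkable with $\Sigma^0_1$-bounded reasoning because any would-be bad sequence would have to meet infinitely many $C_n$'s). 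The point is that a partition of the initial interval below $\infty$ into finitely many ideals $I_1, \dots, I_m$ forces, for each $n$, that $C_n$ lies (cofinally) in one of the $I_j$, and since each $I_j$ is directed and there are only finitely many of them, this pins down for each $n$ whether $C_n$ is short or long; reading this off uniformly produces $\mathrm{range}(g)$ as a set.

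I would streamline both directions by citing \cite[Theorem 4.5]{init} of Frittaion and Marcone for the precise construction, since the excerpt attributes the result to them; the write-up above is the natural route. The delicate point worth flagging explicitly is that, just as with the maximal linear extension theorem (Theorem \ref{MLE}), the quasi-order used in the reversal need not be a "genuine" wqo from the ground model's point of view — it is wqo only because \ACA\ fails — so one should not expect the reversal to go through with an honestly computable wqo, and the construction must be set up so that the badness of the relevant sequences is exactly what \ACA\ (and nothing weaker) can refute. The main obstacle overall remains the forward direction's induction bookkeeping: replacing Bonnet's external induction on the number of ideals by a rank-recursion on the well-founded tree $\Bad(Q)$ (or a tree of finite antichains), which is the standard device already used elsewhere in the paper.
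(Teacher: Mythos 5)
The paper does not actually prove this theorem: it quotes it from \cite[Theorem 4.5]{init}, so your proposal must stand on its own, and both directions have genuine gaps. In the forward direction, the device you propose for organizing Bonnet's argument --- ``working with a tree of bad sequences or antichains in $Q$, which is well-founded because $Q$ is wqo, and letting \ACA\ compute its rank'' --- is not available: \ACA\ does not prove that well-founded trees have rank functions (there are computable well-founded trees whose rank functions dominate every finite jump, so the $\omega$-model of arithmetical sets omits them). This is precisely why the paper invokes \ATR, not \ACA, to rank $\Bad(Q)$ and $\Desc(Q)$ in the proof of Theorem \ref{MLE}, and why the \ACA\ proof of Higman's theorem works with reifications of $\Bad(Q)$ by fixed well-orders rather than with the rank function itself; your route would establish the statement only in \ATR. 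Two further slips: the claim that a wqo has bounded antichain sizes is false (take antichains $A_n$ of size $n$ with every element of $A_n$ below every element of $A_{n+1}$: a wqo with unbounded finite antichains), and the finiteness that actually drives Bonnet's argument concerns families of pairwise \emph{incompatible} elements (no common upper bound), via the Erd\H{o}s--Tarski theorem \cite{ErdTar43}, not antichains; moreover the initial interval generated by a finite maximal antichain is a union of principal ideals, but $Q$ need not be generated by any finite set, so this observation does not yield the decomposition.

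The reversal is broken. If some $C_n$ is a copy of $\omega^*$ then $Q$ contains an infinite descending sequence, and if the nonempty $C_n$ are pairwise incomparable then choosing one point from each yields an infinite antichain; either way $Q$ has a bad sequence outright, provably in \RCA, so the hypothesis ``$Q$ is wqo'' is simply false and the statement gives nothing when applied to $Q$. Worse, since every element lies below $\infty$, the whole of $Q$ is directed and is therefore a single ideal, so even granting wqo-ness the decomposition carries no information; and $Q \setminus \{\infty\}$, to which you then try to apply the statement, is exactly the non-wqo union of the $C_n$. You also assert both that $Q$ is provably wqo in \RCA\ and that it is wqo only because \ACA\ fails; these framings are incompatible, and for an \ACA\ reversal via ``the range of $g$ exists'' one wants the former. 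What is needed is a poset of bounded width (so that \RCA\ proves it is wqo) in which membership of $n$ in $\mathrm{range}(g)$ is coded into the compatibility relation, arranged so that any covering by finitely many directed initial intervals lets one decide, uniformly in $n$, which alternative holds.
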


\section{Major theorems about wqos and bqos}\label{sec:theorems}

In this section we consider the major theorems about wqos and bqos, starting
with Higman's basic result, first proved in \cite{Higman} and then
rediscovered many times.

\begin{definition}[\RCA]\label{Higman}
If $(Q,{\preceq})$ is a quasi-order we define a quasi-order on $\Qom$ by
setting $s \preceq^* t$ if and only if there exists an embedding of $s$ into
$t$, i.e., a strictly increasing $f: \lh s \to \lh t$ such that $s(i) \preceq
t(f(i))$ for every $i < \lh s$ (here $\lh s$ is the length of the sequence
$s$).
\end{definition}

\begin{stat}[Higman's theorem]
If $Q$ is wqo then $(\Qom,{\preceq^*})$ is wqo.
\end{stat}

Before analyzing Higman's theorem from the reverse mathematics viewpoint, let
us introduce other constructions of new quasi-orders starting from the one on
$Q$.

We denote by $\Pow X$ and $\Pf X$ the powerset of $X$ and the set of all
finite subsets of $X$. If $X$ is infinite $\Pow X$ does not exists as a set
in second order arithmetic, but we can define and study relations between
elements of $\Pow X$. A \emph{quasi-order on $\Pow X$} is just a formula
$\varphi$ with two distinguished set variables such that $\varphi(Y,Y)$ holds
and $\varphi(Y,Z)$ and $\varphi(Z,W)$ imply $\varphi(Y,W)$ whenever $Y,Z,W
\subseteq X$. We use symbols like $\preceq$ and infix notation to denote
quasi-orders on $\Pow X$.

\begin{definition}[\RCA]\label{uncountable}
If $\preceq$ is a quasi-order on $\Pow X$, a sequence $(X_n)_{n \in \N}$ of
elements of $\Pow X$ is \emph{good} (with respect to $\preceq$) if there
exist $m <_\N n$ such that $X_m \preceq X_n$. If every such sequence is good
we say that $\preceq$ is \emph{wqo}.

Analogously, a sequence $(X_s)_{s \in B}$ of elements of $\Pow X$ indexed by
a barrier $B$ is \emph{good} (with respect to $\preceq$) if there exist $s,t
\in B$ such that $s \tri t$ and $X_s \preceq X_t$. If every such sequence is
good we say that $\preceq$ is \emph{bqo}.
\end{definition}

The following two quasi-orders are called the \emph{Hoare quasi-order} and
the \emph{Smyth quasi-order} in the computer science literature. (Here we
follow the computer science notation: in \cite{wqobqo} $\preceq^\flat$ was
written as $\preceq^\exists_\forall$ and $\preceq^\sharp$ as
$\preceq^\forall_\exists$.)

\begin{definition}[\RCA]
Let $(Q,{\preceq})$ be a quasi-order. If $X, Y \in \Pow Q$ let
\begin{align*}
  X \preceq^\flat Y & \iff \forall x \in X\, \exists y \in Y\, x \preceq y \quad
        \text{ and}\\
  X \preceq^\sharp Y & \iff \forall y \in Y\, \exists x \in X\, x \preceq y.
\end{align*}
\end{definition}

\begin{theorem}[\RCA]\label{ACAHig}
The following are equivalent:
\begin{enumerate}[\quad (i)]
  \item \ACA;
  \item Higman's theorem;
  \item if $Q$ is wqo then $(\Pf Q, {\preceq^\flat})$ is wqo.
\end{enumerate}\end{theorem}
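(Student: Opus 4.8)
The plan is to prove the equivalence of \ACA\ with Higman's theorem and with the wqo-ness of $(\Pf Q, {\preceq^\flat})$ by establishing a cycle of implications. The natural route is $\rr{i} \implies \rr{ii} \implies \rr{iii} \implies \rr{i}$, together with a direct argument that $\rr{iii}$ (hence also $\rr{ii}$, via a simple reduction) implies \ACA.

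For $\rr{i} \implies \rr{ii}$, I would give the standard minimal bad sequence proof of Higman's theorem, but argue carefully that it goes through in \ACA. Assume $Q$ is wqo but $(\Qom,{\preceq^*})$ is not; fix a bad sequence and extract a minimal bad one with respect to the length-plus-initial-segment ordering (the point is that $\lh s$, or the relation ``$s(0)$ comes from a shorter bad sequence than $t(0)$'', is a well-founded relation definable arithmetically from $Q$, so the minimal bad sequence exists using only arithmetical comprehension — this is exactly where one must be cautious, since the full minimal bad sequence lemma is $\PCA$ by Theorem \ref{mbs}, but for this particular $<'$ \ACA\ suffices). Writing $s_n = \langle q_n \rangle \conc t_n$ for the minimal bad sequence, the sequence $(q_n)$ has an increasing pair $q_m \preceq q_n$ since $Q$ is wqo; then the tail $(t_n)$, when grafted onto $(s_0,\dots,s_{m-1})$, is a strictly smaller bad sequence, contradicting minimality. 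One could alternatively reverse the implication ordering and prove $\rr{iii} \implies \rr{ii}$ directly, encoding a sequence in $\Qom$ as a sequence in $\Pf Q$ using the range, but going through $\rr{i}$ is cleaner.

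For $\rr{ii} \implies \rr{iii}$: given a wqo $Q$, a finite subset of $Q$ can be coded as a finite sequence (say its increasing enumeration with respect to $\leq_\N$), and if $s \preceq^* t$ in the Higman order then certainly $\operatorname{range}(s) \preceq^\flat \operatorname{range}(t)$; conversely any finite subset is the range of such a sequence. So a bad sequence for $\preceq^\flat$ on $\Pf Q$ yields a bad sequence for $\preceq^*$ on $\Qom$, and Higman's theorem finishes this. (This direction is essentially routine; one only needs $\RCA$.) For $\rr{iii} \implies \rr{i}$, which is the heart of the reversal, I expect the main obstacle: one must build, from a function $g: \N \to \N$ (whose range is to be produced by arithmetical comprehension), a wqo $Q$ — really a partial order — such that $(\Pf Q, {\preceq^\flat})$ failing to be wqo forces the range of $g$ to exist. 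The usual trick is to let $Q$ consist of elements $a_n$ (pairwise incomparable base points) together with elements encoding the values $g(n)$, arranged so that a bad sequence in $\Pf Q$ exists precisely when one can decide, for each $k$, whether $k \in \operatorname{range}(g)$; and one checks $Q$ itself is wqo in $\RCA$ by a finite-injury-free argument (any bad sequence would have to be eventually constant on the $a_n$'s or list the finitely many coded values). I would then note that the same $Q$, viewed through $\Qom$ rather than $\Pf Q$, shows $\rr{ii} \implies \rr{i}$ as well, closing the cycle. The trickiest points to get exactly right are the verification that the constructed $Q$ is genuinely wqo over $\RCA$ (not merely over \ACA), and the bookkeeping ensuring that a $\preceq^\flat$-bad sequence really does compute the range of $g$ — but both are standard in the style of \cite{wqobqo}, from which this theorem is drawn.
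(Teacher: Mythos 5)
There is a genuine gap in your proof of \rr{i} $\Rightarrow$ \rr{ii}, and it is precisely the point the paper is at pains to flag. You propose to run the minimal bad sequence argument and claim that, because the auxiliary well-founded relation $<'$ (length, or \lq\lq comes from a shorter bad sequence\rq\rq) is arithmetically definable, the minimal bad sequence can be extracted using only arithmetical comprehension. But the obstruction to carrying out the minimization in \ACA\ has nothing to do with the complexity of $<'$: at each stage one must choose $s_n$ to be $<'$-minimal among those $s$ for which $\langle s_0,\dots,s_{n-1},s\rangle$ \emph{extends to an infinite bad sequence}, and that is a \SI11 condition, not an arithmetical one. This is exactly why the minimal bad sequence lemma is equivalent to \PCA\ (Theorem \ref{mbs}), and the paper explicitly notes that for this reason a proof of Higman's theorem along these lines cannot be carried out in \ACA. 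The paper instead proves \rr{i} $\Rightarrow$ \rr{ii} by the reification technique: \ACA\ shows that every wqo $Q$ admits a reification by a well-order $\alpha$ (a map from the tree $\Bad(Q)$ of finite bad sequences into $\alpha$ that reverses the extension relation), \RCA\ shows that a reification of $Q$ of type $\alpha$ yields a reification of $\Qom$ of type $\omega^{\omega^{\alpha+1}}$, and \ACA\ (equivalently, closure of well-orders under exponentiation) shows the latter is again a well-order; finally \RCA\ proves that a quasi-order admitting a reification by a well-order is wqo. Without replacing your minimization step by something of this kind, the implication \rr{i} $\Rightarrow$ \rr{ii} is not established.

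Your other two directions are essentially in order: \rr{ii} $\Rightarrow$ \rr{iii} by coding finite sets as sequences is exactly the routine \RCA\ argument the paper alludes to, and your sketch of \rr{iii} $\Rightarrow$ \rr{i} is in the spirit of the known proofs (originally via \RT22, or rather closure of wqos under product, in \cite{wqobqo}, and later carried out in \RCA\ in \cite{Noeth}), though the construction of the partial order and the verification that it is wqo provably in \RCA\ would need to be supplied in full.
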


Most proofs of Higman's theorem are based on the minimal bad sequence lemma.
Theorem \ref{mbs} implies that such a proof cannot be carried out in \ACA. In
fact, the provability of Higman's theorem in \ACA\ is based on the technique
of reification of wqos by well-orders (\cite{JP,Schmidt}, see also \cite{KT})
and follows from the results in Section 4 of \cite{Simpson88b} (see
\cite[Theorem 3]{Clote90} for details). A \emph{reification} of $Q$ by the
linear order $(X,{\leq_X})$ is a map $\rho$ from $\Bad(Q)$ to $X$ such that
$\rho(t) <_X \rho(s)$ whenever $s \init t$. Thus, if $X$ is a well-order then
$\rho$ is an approximation to the rank function on $B(Q)$, and suffices to
witness that $\Bad(T)$ is well-founded and hence $Q$ is wqo.

\ACA\ is used twice in this proof: first to show that every wqo admits a
reification by a well-order and then to show that
$\omega^{\omega^{\alpha+1}}$ is a well-order when $\alpha$ is a well-order
(closure of well-orders under exponentiation is equivalent to \ACA\ over
\RCA\ by \cite{Girard}, see \cite{Hirst94}). This suffices, because \RCA\
proves that if $Q$ has a reification of order type $\alpha$ then $\Qom$ has a
reification of order type $\omega^{\omega^{\alpha+1}}$ (\cite[Sublemma
4.8]{Simpson88b}, which is Lemma 5.2 of \cite{Schutte-Simpson}) and that if a
quasi-order admits a reification by a well-order then it is wqo.

\RCA\ clearly suffices to prove that \rr{ii} implies \rr{iii} of Theorem
\ref{ACAHig}, while the implication from \rr{iii} to \ACA\ was proved in
\cite{wqobqo} using \RT22 (but in fact only closure of wqo under product was
necessary); the provability of this implication in \RCA\ was shown in
\cite[Theorem 2.5]{Noeth}.

If $Q$ is wqo then in general neither $\Pow Q$ with respect to
$\preceq^\flat$ nor $\Pf Q$ with respect to $\preceq^\sharp$ are wqo. However
if we strengthen the hypothesis to $Q$ bqo we obtain some true statements
which have been studied from the reverse mathematics viewpoint. The following
theorems summarize Theorems 5.4 and 5.6 in \cite{wqobqo}.

\begin{theorem}[\RCA]\label{RCAbqoflatsharp}
If $Q$ is bqo then $(\Pf Q, {\preceq^\flat})$ and $(\Pow Q,
{\preceq^\sharp})$ (and hence, a fortiori, also $(\Pf Q, {\preceq^\sharp})$)
are bqo.
\end{theorem}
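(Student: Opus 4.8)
The statement to prove is Theorem~\ref{RCAbqoflatsharp}: over \RCA, if $Q$ is bqo then $(\Pf Q, {\preceq^\flat})$ and $(\Pow Q, {\preceq^\sharp})$ are bqo. The plan is to prove both assertions by a direct barrier argument rather than via the minimal bad array lemma (which is unavailable in \RCA). Suppose toward a contradiction that one of these is not bqo; then there is a barrier $B$ and a bad map carrying elements of $B$ to subsets of $Q$. The key idea, in both cases, is to ``uniformize'' such a map into an honest $Q$-valued map on a derived barrier and then invoke that $Q$ is bqo.

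For the $\preceq^\flat$ case on $\Pf Q$: given a bad $F\colon B\to\Pf Q$, for each $s\in B$ the value $F(s)$ is a nonempty finite subset of $Q$ (if some $F(s)=\emptyset$ then $F(s)\preceq^\flat F(t)$ trivially for any $t$ with $s\tri t$, contradicting badness), so we can list its elements. The natural move is to pass to the barrier $B' = \set{s\conc\langle n\rangle}{s\in B,\ n<_\N |F(s)|}$ — or, more precisely, a barrier obtained from $B$ by splitting each $s$ according to an enumeration of $F(s)$ — and define $g\colon B'\to Q$ by letting $g(s\conc\langle n\rangle)$ be the $n$-th element of $F(s)$. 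Since $Q$ is bqo, $g$ is good: there are $u\tri v$ in $B'$ with $g(u)\preceq g(v)$. Writing $u = s\conc\langle m\rangle$ and analyzing how $u\tri v$ forces $s$ and the ``parent'' of $v$ in $B$ to be related by $\tri$, one recovers $s\tri t$ with an element of $F(s)$ below an element of $F(t)$; but $F(s)\preceq^\flat F(t)$ requires \emph{every} element of $F(s)$ to sit below some element of $F(t)$, so one iteration does not suffice. The remedy is to apply the minimal/Nash--Williams-style combinatorial lemma that a bad array remains bad after intersecting with a subbarrier, combined with an induction on $\max_{s}|F(s)|$ along the barrier — but since the $F(s)$ can be unboundedly large this induction must instead be replaced by working with the barrier $B'$ globally and exploiting that $B'$ is still a barrier on (a subset of) $\base(B)$.

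For the $\preceq^\sharp$ case on $\Pow Q$: here the subsets $F(s)$ may be infinite, so we cannot enumerate them into a barrier in the same way. Instead, the standard trick is to use a choice-style argument: from a putative bad $F\colon B\to\Pow Q$ extract witnesses. Since $F$ is bad, for every $s\tri t$ we have $F(s)\not\preceq^\sharp F(t)$, i.e.\ there exists $y\in F(t)$ with no $x\in F(s)$ satisfying $x\preceq y$. One picks such a witness $y = y(s,t)$; the difficulty is that it depends on the \emph{pair}. The way around this — which is why $\Pow Q$ with $\preceq^\sharp$ works while $\Pow Q$ with $\preceq^\flat$ does not — is to build a single $Q$-valued map $h$ on a barrier refining $B$ by a successive-element trick, using the relation $\tri$ to chain the witnesses, and then to derive a bad $Q$-sequence, contradicting that $Q$ is bqo. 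The final parenthetical claim, that $(\Pf Q,{\preceq^\sharp})$ is bqo, is immediate from the $\Pow Q$ case by Lemma~\ref{RCAsubsetbqo}, since $\Pf Q$ is (the domain of) a sub-quasi-order of $(\Pow Q,{\preceq^\sharp})$.

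I expect the \textbf{main obstacle} to be the combinatorial bookkeeping in the $\preceq^\flat$ case: turning ``for every $s\tri t$, \emph{some} element of $F(s)$ fails to lie below \emph{any} element of $F(t)$'' into a bad $Q$-valued map on a genuine barrier, without appealing to more induction than \SI01 (so no induction on the sizes $|F(s)|$), and checking carefully that the splitting construction $B\rightsquigarrow B'$ indeed yields a barrier and that $\tri$-relations transfer correctly between $B'$ and $B$. This is precisely the point where one must reproduce the Nash--Williams-type argument from \cite{wqobqo} within \RCA; since the sought axiom strength is just \RCA, all the set-existence used (enumerating finite sets, forming $B'$, extracting increasing enumerations of infinite sets) is \DE01 and hence legitimate, but verifying the barrier axioms (1), (2), (3$'$) for the derived objects is where the care is needed.
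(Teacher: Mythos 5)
Your overall strategy --- assume a bad array of sets and extract from it a bad $Q$-valued array on a derived barrier, then invoke that $Q$ is bqo --- is the right one, and your disposal of $(\Pf Q,{\preceq^\sharp})$ via Lemma \ref{RCAsubsetbqo} is exactly the paper's ``a fortiori''. But the core of the argument is missing, and the device you reach for in the $\preceq^\flat$ case is the wrong one. The survey gives no proof here (the theorem summarizes Theorems 5.4 and 5.6 of \cite{wqobqo}); the standard argument records the witness on the barrier of \emph{consecutive pairs}
\[
B^{2}=\set{s\cup t}{s,t\in B,\ s\tri t},
\]
which \RCA\ proves is again a barrier with $\base(B^2)=\base(B)$ and which has the crucial property that if $u=s_1\cup t_1\tri v=s_2\cup t_2$ in $B^2$ then $t_1=s_2$. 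If $F\colon B\to\Pf Q$ is bad for $\preceq^\flat$, then for each $s\tri t$ there is (effectively, since $F(s)$ and $F(t)$ are finite) a least $x\in F(s)$ with $x\not\preceq y$ for all $y\in F(t)$; setting $g(s\cup t)$ equal to this witness gives $g\colon B^2\to Q$, and the displayed property of $\tri$ on $B^2$ shows $g$ is bad: $g(v)\in F(s_2)=F(t_1)$, while $g(u)$ was chosen not to lie below anything in $F(t_1)$. The $\preceq^\sharp$ case is handled dually, with the witness taken in $F(t)$. Your splitting construction $B\rightsquigarrow B'$ goes in the opposite direction: it is what you would need in order to verify $F(s)\preceq^\flat F(t)$ \emph{element by element}, which is exactly why you find that ``one iteration does not suffice'' and are driven to an induction on $|F(s)|$ that you correctly observe is unavailable. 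The point of the pair construction is that badness hands you a \emph{single} witness per pair, so no induction on the sizes of the sets is needed. (As written, your $B'$ is also not a barrier: $s\conc\langle n\rangle$ with $n<_\N|F(s)|$ need not even be an increasing sequence, and conditions (2) and (3$'$) of Definition \ref{def:barrier} fail for it.)

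Two further inaccuracies. First, producing a bad ``$Q$-sequence'' (your wording in the $\preceq^\sharp$ case) would only contradict $Q$ being wqo; you must produce a bad array on a barrier to use the hypothesis that $Q$ is bqo. Second, it is not true that ``$\Pow Q$ with $\preceq^\flat$ does not work'': Theorem \ref{ACAbqoflat} proves that statement in \ACA, and whether \RCA\ suffices is precisely Question \ref{q:ACAbqoflat}. The genuine asymmetry you should be isolating is one of effectivity of the witness selection when the sets involved are infinite, not one of truth; this is also the place where your sketch of the $\Pow Q$, $\preceq^\sharp$ case would need real work, since there the witness condition involves an unbounded universal quantifier over $F(s)$.
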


\begin{theorem}[\ACA]\label{ACAbqoflat}
If $Q$ is bqo then $(\Pow Q, {\preceq^\flat})$ is bqo.
\end{theorem}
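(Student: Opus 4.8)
The plan is to prove the statement within \ACA\ by a minimal-bad-array style argument, but carried out in the restricted form available at this level of strength. Suppose $Q$ is bqo and, towards a contradiction, that $(\Pow Q, {\preceq^\flat})$ is not bqo. Then there is a barrier $B$ and a map $F: B \to \Pow Q$ (i.e., a sequence $(X_s)_{s \in B}$ of subsets of $Q$) which is bad with respect to $\preceq^\flat$: for all $s \tri t$ in $B$ there is some $x \in X_s$ with $x \not\preceq y$ for every $y \in X_t$. The key idea is to use this failure of $\preceq^\flat$ to extract, from within $Q$ itself, a bad array into $Q$, contradicting that $Q$ is bqo. Concretely, for each pair $s \tri t$ in $B$ the badness of $F$ hands us a ``witness'' element of $X_s$; one wants to make a uniform choice of such witnesses and assemble them into an array on a (sub)barrier.

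First I would set up the bookkeeping. Working in \ACA, for $s \tri t$ let $w(s,t)$ be the $\leq_\N$-least element of $X_s$ that is $\preceq$-above no element of $X_t$; this is an arithmetically definable function, so it exists. The natural move is to pass to the barrier $B' = \set{s \conc \langle n \rangle}{s \in B,\ s \conc \langle n \rangle \text{ ``makes sense'' as a } \tri\text{-successor pattern}}$ — more precisely, one works with the barrier consisting of concatenations witnessing the $\tri$ relation on $B$, so that an element of this auxiliary barrier codes a pair $s \tri t$. On this auxiliary barrier define $g(s,t) = w(s,t) \in Q$. The point of the construction is that if $g$ were good, say $g(s,t) \preceq g(t,u)$ for suitable $\tri$-related pairs, then since $g(t,u) = w(t,u) \in X_t$ we would get an element of $X_t$ that is $\preceq$-above $w(s,t)=g(s,t) \in X_s$; but $w(s,t)$ was chosen precisely so that no element of $X_t$ is $\preceq$-above it — a contradiction. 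Hence $g$ is a bad array into $Q$, contradicting $Q$ bqo. The remaining work is to check that the auxiliary index set is genuinely a barrier (using Lemma \ref{base} to normalize bases to \N), that the $\tri$ relation on the auxiliary barrier lines up correctly with the $\tri$ relation on $B$ in the way the argument needs, and that all the definitions go through with only arithmetical comprehension.

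The hard part will be getting the combinatorics of the auxiliary barrier exactly right: the $\tri$ relation on a barrier does not interact with concatenation in a completely transparent way (recall the remark after Definition \ref{tri} that $\tri$ is not transitive), so one must be careful that the pairs $(s,t)$, $(t,u)$ arising from goodness of $g$ really do come from a single $\tri$-related triple in $B$, and that $t$ plays the ``middle'' role so that $g(t,u) \in X_t$. One clean way to finesse this is to index not by pairs but to use the barrier $B^{(2)} = \set{s \in \sq\N}{s^- \in B \text{ and } (s^-)^- \in B}$-type construction built from two successive applications of the $(\cdot)^-$ operation, so that a single element of $B^{(2)}$ simultaneously encodes $s \tri t$ and $t \tri u$ with a common middle term; then badness of the derived array on $B^{(2)}$ is exactly what contradicts the badness of $F$. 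Verifying that this $B^{(2)}$ is a barrier and that arithmetical comprehension suffices to form the array on it is the main technical obstacle; once that is in place the contradiction is immediate, and the $\flat$-to-$\sharp$ direction is not needed here since we are proving the $\flat$ statement directly. One should also double-check that \ACA, and not merely \RCA, is actually required — the use of $\base$ and the least-witness function are the places where arithmetical comprehension enters, matching the fact (Lemma \ref{base} notwithstanding) that base-existence for general $B \subseteq \sq\N$ is equivalent to \ACA.
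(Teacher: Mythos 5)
Your overall strategy is correct and is essentially the proof of this theorem in the reference the paper summarizes (\cite[Theorem 5.6]{wqobqo}): from a bad $F : B \to \Pow Q$ one arithmetically selects, for each $\tri$-related pair, a least witness to the failure of $\preceq^\flat$, assembles these witnesses into an array on an auxiliary barrier, and contradicts the assumption that $Q$ is bqo. You also locate the use of \ACA\ correctly: the witness is the least element satisfying a \PI01 condition about the infinite set $X_t$, which is where arithmetical comprehension is genuinely needed (for $\Pf Q$ the same condition is \DE01, which is why Theorem \ref{RCAbqoflatsharp} lives at the \RCA\ level); the remark about $\base$ is a red herring, since Lemma \ref{base} already handles bases of blocks in \RCA.

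Two points need repair. First, a direction slip: as first written, \lq\lq the least element of $X_s$ that is $\preceq$-above no element of $X_t$\rq\rq\ says $\forall y \in X_t\, y \not\preceq x$, whereas negating $X_s \preceq^\flat X_t$ gives some $x \in X_s$ with $\forall y \in X_t\, x \not\preceq y$; your later phrasing (\lq\lq no element of $X_t$ is $\preceq$-above it\rq\rq) is the correct one and is what your contradiction actually uses. Second, the auxiliary barrier you want is the standard $B^{(2)} = \set{s \cup t}{s,t \in B,\ s \tri t}$, which encodes a single $\tri$-pair rather than a triple; the formula \lq\lq$s^- \in B$ and $(s^-)^- \in B$\rq\rq\ does not describe the right object. \RCA\ proves $B^{(2)}$ is a barrier, and the combinatorial fact you flag as the main obstacle is the following: each $u \in B^{(2)}$ decomposes uniquely as $s \cup t$ with $s \initeq u$, $t \initeq u^-$ and $s,t \in B$; and if $u_1 \tri u_2$ in $B^{(2)}$, witnessed by $v$ with $u_1 \initeq v$ and $u_2 \initeq v^-$, then $t_1 \initeq u_1^- \initeq v^-$ and $s_2 \initeq u_2 \initeq v^-$, so $t_1$ and $s_2$ are two elements of $B$ that are both initial segments of $v^-$, whence $t_1 = s_2$ by clause (3) of Definition \ref{def:barrier}. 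This is exactly the \lq\lq common middle term\rq\rq\ you need: $g(u_2) \in X_{s_2} = X_{t_1}$ while $g(u_1)$ was chosen to be $\preceq$-below no element of $X_{t_1}$, so $g$ is bad on $B^{(2)}$ and the contradiction goes through. With these repairs the proof is complete.
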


\begin{question}\label{q:ACAbqoflat}
Is the statements \lq\lq if $Q$ is bqo then $(\Pow Q, {\preceq^\flat})$ is
bqo\rq\rq\ equivalent to \ACA\ over \RCA?
\end{question}

Trying to answer affirmatively the previous question, one is faced with the
problem of applying the statement to a quasi-order $Q$ which is proved to be
bqo within \RCA. Such a $Q$ must be infinite (otherwise $\Pow Q = \Pf Q$ and
Theorem \ref{RCAbqoflatsharp} applies) and, unless the answer to Question
\ref{q:3bqo} is \RCA, with antichains of size at most $2$.

More results about the Hoare and Smyth quasi-orders (obtained by weakening
the conclusion) will be discussed in Section \ref{sec:topo} below.\smallskip

Another important result about wqos is Kruskal's theorem \cite{Kruskal60},
establishing a conjecture of V\'{a}zsonyi from the 1930's popularized by
Erd\H{o}s. This theorem deals with trees viewed as partial orders: for our
purposes we can represent them in second-order arithmetic as subsets of \Seq\
closed under initial segments.

\begin{definition}[\RCA]
Let $\mathcal{T}$ be the set of all finite trees. If $T_0, T_1 \in
\mathcal{T}$ let $T_0 \preceq_\mathcal{T} T_1$ if and only if there exists a
homeomorphic embedding of $T_0$ in $T_1$, that is, an injective $f: T_0 \to
T_1$ such that $f (s \land t) = f(s) \land f(t)$ for every $s,t \in T_0$
(where $s \land t$ is the greatest lower bound of $s$ and $t$, which is the
longest common initial segment of the two sequences).

If $Q$ is a set let $\mathcal{T}^Q$ be the set of finite trees labelled with
elements of $Q$, that is, pairs $(T, \ell)$ such that $T \in \mathcal{T}$ and
$\ell$ is a function from $T$ to $Q$.

If $(Q,{\preceq})$ is a quasi-order and $(T_0, \ell_0), (T_1, \ell_1) \in
\mathcal{T}^Q$ let $(T_0, \ell_0) \preceq_{\mathcal{T}^Q} (T_1, \ell_1)$ if
and only if there exists a homeomorphic embedding $f$ of $T_0$ in $T_1$ such
that $\ell_0 (s) \preceq \ell_1 (f(s))$ for every $s \in T_0$.
\end{definition}

\RCA\ easily shows that $\preceq_\mathcal{T}$ and $\preceq_{\mathcal{T}^Q}$
are quasi-orders.

\begin{stat}[Kruskal's theorem]
If $Q$ is wqo then $(\mathcal{T}^Q, {\preceq_{\mathcal{T}^Q}})$ is wqo.
\end{stat}

The usual proof of Kruskal's theorem uses the minimal bad sequence lemma and
can be carried out in \PCA\ using Theorem \ref{mbs}. On the other hand, this
statement is \PI12 and hence cannot imply \PCA\ (see \cite[Corollary
1.10]{NWT}).

Harvey Friedman proved the following striking result (see \cite{Simpson85b}).

\begin{theorem}\label{KT}
\ATR\ does not prove that $(\mathcal{T}, {\preceq_{\mathcal{T}}})$ is wqo. A
fortiori Kruskal's theorem is not provable in \ATR.
\end{theorem}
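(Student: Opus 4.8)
The plan is to establish the stronger assertion that already the ``unlabelled'' statement ``$(\mathcal{T},{\preceq_\mathcal{T}})$ is wqo'' fails in \ATR, by showing that \emph{this statement proves, over \RCA, the well-foundedness of a standard system of ordinal notations whose order type exceeds the proof-theoretic ordinal of \ATR.} The relevant ordinal is the small Veblen ordinal $\vartheta(\Omega^\omega) = \sup_n \vartheta(\Omega^n)$; by later proof-theoretic work (Rathjen and Weiermann) it is exactly the ordinal measuring the strength of this statement, and it lies strictly above $\Gamma_0 = |\ATR|$. Once this reduction is in hand the theorem is immediate: if \ATR\ proved ``$\mathcal{T}$ is wqo'' it would prove $\mathrm{WO}(\vartheta(\Omega^\omega))$, whence $|\ATR| \geq \vartheta(\Omega^\omega) > \Gamma_0$, a contradiction. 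The ``a fortiori'' clause then needs only \RCA: instantiating Kruskal's theorem at the one-element quasi-order $1$ yields that $(\mathcal{T},{\preceq_\mathcal{T}})$ is wqo, since a finite tree labelled over a single point is just a finite tree.

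The substantive part is the classical combinatorial analysis of finite trees under topological (infimum-preserving) embedding, initiated by de Jongh--Parikh and Schmidt and sharpened by Friedman (see \cite{Simpson85b}). One constructs a primitive recursive assignment $\alpha \mapsto e(\alpha)$ sending each notation below $\vartheta(\Omega^\omega)$ to a finite tree, arranged so that the branching structure of $e(\alpha)$ encodes the nesting of the collapsing (Veblen-style) functions occurring in the normal form of $\alpha$; concretely one handles the notations below $\vartheta(\Omega^n)$, for each fixed $n$, by a uniform sub-construction and then takes the union over $n$. The key lemma is that $e$ is \emph{order-reflecting}: if $e(\alpha) \preceq_\mathcal{T} e(\beta)$ then $\alpha \leq \beta$. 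Since this is an arithmetical statement proved by induction on the way the trees are built up, it is available already in \RCA\ (and in any case well within \ATR). Granting it, argue in \RCA: if $\mathrm{WO}(\vartheta(\Omega^\omega))$ failed, a descending sequence $\alpha_0 > \alpha_1 > \cdots$ would give a sequence $e(\alpha_0), e(\alpha_1), \dots$ in $\mathcal{T}$ with $e(\alpha_i) \npreceq_\mathcal{T} e(\alpha_j)$ for all $i < j$ (contrapositive of order-reflection, using $\alpha_i \not\leq \alpha_j$), i.e.\ a bad sequence; hence $\mathcal{T}$ would not be wqo, which is the desired implication.

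The main obstacle is precisely this combinatorial step: writing the embeddings $e$ down explicitly and verifying order-reflection inside a weak system. Here the exact shape of $\preceq_\mathcal{T}$ is essential --- it is the requirement $f(s \land t) = f(s) \land f(t)$ on a homeomorphic embedding that lets one decode an embedding of $e(\alpha)$ into $e(\beta)$ as a witness to $\alpha \leq \beta$, and this meet-preservation is exactly what gives $\preceq_\mathcal{T}$ its great strength. Beyond this, the argument uses as black boxes two standard facts of ordinal analysis: that $|\ATR| = \Gamma_0$, and that the small Veblen ordinal $\sup_n \vartheta(\Omega^n)$ is strictly above $\Gamma_0$. (Alternatively one could pass to Friedman's finite miniaturisation of Kruskal's theorem together with its associated fast-growing function and show that \ATR\ does not prove its totality, but for the statement as phrased the well-foundedness reduction above is the most direct route.)
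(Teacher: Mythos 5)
Your proposal is correct and follows essentially the same route as the paper: both reduce \lq\lq$(\mathcal{T},{\preceq_{\mathcal{T}}})$ is wqo\rq\rq\ to the well-orderedness of a primitive recursive ordinal notation system that \ATR\ cannot prove well-ordered, via an order-reflecting correspondence between finite trees and notations, and then invoke $|\ATR|=\Gamma_0$. The only cosmetic differences are that the paper's map $\psi$ goes from trees to notations (order-preserving) rather than from notations to trees (order-reflecting), and that the paper targets exactly $\Gamma_0$ where you overshoot to the small Veblen ordinal; neither affects the argument.
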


To prove this theorem we build a map $\psi$ between $\mathcal{T}$ and a
certain primitive recursive notation system for the ordinals less than
$\Gamma_0$, and show that \ACA\ proves that $\psi (T_0) \leq_o \psi (T_1)$
(where $\leq_o$ is the order on the ordinal notation system) whenever $T_0
\preceq_\mathcal{T} T_1$. Thus \ACA\ proves that if $(\mathcal{T},
{\preceq_{\mathcal{T}}})$ is wqo then the system of ordinal notations is a
well-order. Since $\Gamma_0$ is the proof-theoretic ordinal of \ATR, it
follows that \ATR\ does not prove that $(\mathcal{T},
{\preceq_{\mathcal{T}}})$ is wqo.

A lower bound for Kruskal's theorem is provided by the following theorem,
that apparently has never been explicitly stated.

\begin{theorem}[\RCA]\label{KT->ATR}
Kruskal's theorem implies \ATR.
\end{theorem}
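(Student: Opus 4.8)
The plan is to derive \ATR\ in two steps. First, Kruskal's theorem easily yields \ACA, which frees us to work with ordinal notation systems. Second, I would relativize the ordinal assignment underlying Friedman's Theorem~\ref{KT}: applied to finite trees labelled by a well-ordering $\alpha$, used as the quasi-order of labels, Kruskal's theorem forces the well-foundedness of an ordinal notation system $W_\alpha$ built uniformly from $\alpha$, and the resulting assertion ``$W_\alpha$ is well-ordered for every well-ordering $\alpha$'' already implies \ATR.

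First, I would note that \RCA\ already proves that Kruskal's theorem implies Higman's theorem. Send $s \in \Qom$ to the labelled \emph{caterpillar} $\operatorname{cat}(s) \in \mathcal{T}^Q$ (a labelled path): the tree consisting of the sequences of zeros of length less than $\lh s$, with the sequence of $i$ zeros labelled $s(i)$. Since in a caterpillar any two nodes are $\initeq$-comparable, a homeomorphic embedding of $\operatorname{cat}(s)$ into $\operatorname{cat}(t)$ is exactly an $\initeq$-increasing injection, hence is given by a strictly increasing $g\colon \lh s \to \lh t$ with $s(i) \preceq t(g(i))$ for all $i < \lh s$; conversely every such $g$ induces one. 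Thus $\operatorname{cat}$ is an isomorphism of $(\Qom,{\preceq^*})$ onto the restriction of $(\mathcal{T}^Q,{\preceq_{\mathcal{T}^Q}})$ to the set of caterpillars $\set{\operatorname{cat}(s)}{s\in\Qom}$, so if $(\mathcal{T}^Q,{\preceq_{\mathcal{T}^Q}})$ is wqo then, by Lemma~\ref{RCAsubset} and this isomorphism, $(\Qom,{\preceq^*})$ is wqo. Hence Kruskal's theorem implies Higman's theorem, and by Theorem~\ref{ACAHig} it implies \ACA.

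For the second step I would argue in \ACA. Fix a well-ordering $\alpha$; since \RCA\ proves that every well-ordering is wqo, Kruskal's theorem gives that $(\mathcal{T}^{\alpha},{\preceq_{\mathcal{T}^{\alpha}}})$ is wqo. Relativizing the construction of Theorem~\ref{KT}, one obtains an ordinal notation system $W_\alpha$ defined uniformly from $\alpha$ (for the empty label set this is the notation system of Friedman's proof, and $W_\alpha$ contains a copy of $\alpha$), together with a surjection $\psi_\alpha\colon \mathcal{T}^{\alpha} \to W_\alpha$ which \ACA\ proves is order-preserving, i.e.\ $\psi_\alpha(T_0)\le_{W_\alpha}\psi_\alpha(T_1)$ whenever $T_0\preceq_{\mathcal{T}^{\alpha}}T_1$. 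From any infinite descending sequence in $W_\alpha$ one then reads off $\psi_\alpha$-preimages forming a bad sequence in $\mathcal{T}^{\alpha}$, so \ACA\ proves, uniformly in $\alpha$, that if $(\mathcal{T}^{\alpha},{\preceq_{\mathcal{T}^{\alpha}}})$ is wqo then $W_\alpha$ is well-ordered. Consequently Kruskal's theorem proves that $W_\alpha$ is well-ordered for every well-ordering $\alpha$ (and, by coding an arbitrary set parameter into $\alpha$, the corresponding relativized statement). This uniform well-foundedness assertion implies \ATR\ over \RCA: it entails the well-ordering principle for the Veblen function (``$\mathrm{WO}(\alpha)$ implies $\mathrm{WO}(\varphi_\alpha(0))$'') that is known to be equivalent to \ATR, the underlying point being that well-foundedness of $\varphi_\alpha(0)$ witnesses that arithmetical comprehension can be iterated along $\alpha$.

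I expect the main obstacle to be the second step: checking that Friedman's tree-to-ordinal assignment genuinely relativizes to labelled trees over an arbitrary well-ordering $\alpha$, that it can be carried out with the induction available in \ACA, that it is uniform in $\alpha$, and that $W_\alpha$ is rich enough to recover the relevant well-ordering principle. Note that the quantification over all $\alpha$ — equivalently, the \PI12 form of Kruskal's theorem — is essential here, since a single instance such as Friedman's well-foundedness of the $\Gamma_0$-notation is \PI11 and so cannot by itself imply \ATR. Alternatively, in the style of the Marcone--Shore proof of Theorem~\ref{MLE}, one can replace the concluding appeal to the Veblen-function principle by a direct contradiction: arguing in \ACA, if \ATR\ failed then Theorem~\ref{Shore} would furnish a linear ordering $L$ with no infinite descending sequence but along which arithmetical transfinite recursion nonetheless fails; the model would then regard $L$ as wqo, hence $(\mathcal{T}^{L},{\preceq_{\mathcal{T}^{L}}})$ as wqo by Kruskal's theorem, and the uniform implication above instantiated at $\alpha = L$ would make that recursion available after all — a contradiction.
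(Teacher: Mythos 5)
Your proposal follows essentially the same route as the paper's sketch: reduce to Higman's theorem to secure \ACA, then relativize the ordinal assignment of Theorem~\ref{KT} to finite trees labelled by an arbitrary well-order $X$, obtaining that $\boldsymbol\varphi(X,0)$ (your $W_\alpha$) is well-ordered, and conclude via the Friedman--Rathjen--Weiermann--Marcone--Montalb\'an equivalence of this Veblen well-ordering principle with \ATR. The caterpillar reduction and the observation that the full \PI12 form of Kruskal's theorem is needed are correct and consistent with the paper's argument.
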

\begin{proof}[Sketch of proof]
We use the fact that \ATR\ is equivalent, over \RCA, to the statement that if
$X$ is a well-order then $\boldsymbol \varphi(X,0)$ is a well-order, where
$\boldsymbol \varphi$ is the formalization of the Veblen function on the
ordinals. This theorem was originally proved by H. Friedman (unpublished) and
then given a proof-theoretic proof in \cite{RW11} and a
computability-theoretic proof in \cite{Veblen}. We follow the notation of the
latter paper.

To prove our theorem first notice that Kruskal's theorem generalizes Higman's
theorem, so that we can argue in \ACA. Given a well-order $X$ we can mimic
the construction of the proof of Theorem \ref{KT} using $X$ as the set of
labels for the finite trees. In this way we define a map $\psi$ between
$\mathcal{T}^X$ and the ordinals less than the first fixed point for the
Veblen function strictly larger than $X$. We then show that \ACA\ proves that
$\psi (T_0) \leq_\varphi \psi (T_1)$ whenever $T_0 \preceq_{\mathcal{T}^X}
T_1$. Since our hypothesis implies that $(\mathcal{T}^X,
{\preceq_{\mathcal{T}^X}})$ is wqo we obtain that $(\boldsymbol \varphi(X,0),
{\leq_\varphi})$ is a well-order, as needed.
\end{proof}

Thus Kruskal's theorem is properly stronger than \ATR\ and provable in, but
not equivalent to, \PCA. In an attempt to classify statements of this kind,
Henry Towsner \cite{Tow:imp} introduced a sequence of intermediate systems
based on weakening the leftmost path principle (which is equivalent to \PCA).
Towsner tested his approach by looking at various statements and, by
analyzing Nash-Williams' proof of Kruskal's theorem, obtained the following
result.

\begin{theorem}\label{Tow:KT}
Kruskal's theorem is provable in Towsner's system
$\Sigma_2$-$\mathsf{LPP}_0$.
\end{theorem}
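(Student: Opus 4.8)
The plan is to formalize the Nash--Williams minimal bad sequence proof of Kruskal's theorem and to check that each ingredient is available in $\Sigma_2$-$\mathsf{LPP}_0$. Recall that the full leftmost path principle $\mathsf{LPP}$ --- every ill-founded tree has a leftmost infinite path --- is equivalent to $\PCA$ (equivalently, via Theorem \ref{mbs}, to the minimal bad sequence lemma), and that Towsner's $\Sigma_2$-$\mathsf{LPP}_0$ is the subsystem obtained by restricting the way this principle may be invoked: a theory extending $\ACA$ and properly weaker than $\PCA$. Arguing inside $\Sigma_2$-$\mathsf{LPP}_0$, fix a wqo $Q$ and suppose toward a contradiction that $(\mathcal{T}^Q, {\preceq_{\mathcal{T}^Q}})$ is not wqo, so that there is a bad $g : \N \to \mathcal{T}^Q$.

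First I would realize the minimal bad sequence as a leftmost path. Let $\mathcal{B}$ be the tree whose nodes are the finite bad sequences of elements of $\mathcal{T}^Q$, in which the immediate successors of a node are ordered first by the number of vertices of the labelled tree being appended and then by a fixed canonical ordering of $\mathcal{T}^Q$. Since finite labelled trees are finite objects and $\preceq_{\mathcal{T}^Q}$ restricted to them is decidable relative to the order on $Q$, the tree $\mathcal{B}$ is $\Delta^0_1(Q)$, and by hypothesis it is ill-founded. The one genuinely impredicative step is the extraction of the leftmost path $(t_n)_{n \in \N}$ of $\mathcal{B}$; the point --- and the source of the bound $\Sigma_2$ --- is that the only property of $(t_n)$ used in the sequel is \emph{coordinatewise minimality}: for every $n$, no bad sequence extending $(t_0,\dots,t_{n-1})$ has at coordinate $n$ a labelled tree with strictly fewer vertices than $t_n$. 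This property rests on the $\Sigma^1_1$ notion ``extends to an infinite bad sequence'', and the claim to be verified is that extracting such a path from $\mathcal{B}$, and using it only in this way, is exactly an instance of the restricted leftmost path principle available in $\Sigma_2$-$\mathsf{LPP}_0$.

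Granting $(t_n)$ and its coordinatewise minimality, the remainder is Nash--Williams' argument. Using $\ACA$, form the set $Q'$ of all immediate subtrees of the $t_n$, each carrying the labels inherited from its $t_n$. Then $(Q', {\preceq_{\mathcal{T}^Q}})$ is wqo: a bad sequence $(s_i)$ in $Q'$ whose first term $s_0$ is the immediate subtree of $t_m$ of least index would make $(t_0,\dots,t_{m-1},s_0,s_1,\dots)$ a bad sequence --- here one uses that any subtree embeds, label-preservingly, into the whole tree --- while $s_0$ has strictly fewer vertices than $t_m$, contradicting coordinatewise minimality at coordinate $m$. Now send each $t_n$ to the pair consisting of its root label, an element of $Q$, and the finite sequence of its immediate subtrees, an element of $(Q')^{<\N}$. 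By Higman's theorem, equivalent to $\ACA$ by Theorem \ref{ACAHig} and hence available, $((Q')^{<\N}, {\preceq^*})$ is wqo; since $\ACA$ also proves the closure of wqos under product (via $\RT22$), so is $Q \times (Q')^{<\N}$. Hence the sequence of pairs is good: there are $m <_\N n$ with the root labels $\preceq$-comparable and the sequences of immediate subtrees $\preceq^*$-comparable. Reassembling the embedding of the subtree sequence (root to root, and within the $i$-th subtree of $t_m$ the embedding witnessing that it maps into the matched subtree of $t_n$) produces a label-preserving homeomorphic embedding of $t_m$ into $t_n$, i.e.\ $t_m \preceq_{\mathcal{T}^Q} t_n$, contradicting the badness of $(t_n)$.

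The hard part is the calibration underlying the second paragraph: one must match both the definition of $\mathcal{B}$ and the precise way in which coordinatewise minimality of its leftmost path is consumed --- most delicately in the proof that $Q'$ is wqo --- against the syntactic restrictions defining $\Sigma_2$-$\mathsf{LPP}_0$, and one must confirm that the remaining inductions (forming $Q'$, invoking Higman's theorem, unwinding the reassembled embedding) lie within the induction available there. A secondary, easier point is to carry the argument uniformly in an arbitrary, possibly infinite, label set $Q$; since the trees themselves are finite, the order on $Q$ enters only as a $\Delta^0_1$ oracle and no difficulty arises beyond the unlabelled case. Should the textbook form of Nash--Williams' proof turn out to demand more of the leftmost path principle than $\Sigma_2$-$\mathsf{LPP}_0$ supplies, the fallback is to isolate the genuinely weaker consequence of the minimal bad sequence lemma that the argument actually uses and to phrase that directly as an instance of the restricted principle.
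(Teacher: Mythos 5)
The paper itself gives no proof of this theorem beyond attributing it to Towsner's analysis of Nash--Williams' argument in \cite{Tow:imp}, so what your proposal must be measured against is whether it actually carries out that analysis. It does not. Your second and third paragraphs correctly reproduce the classical Nash--Williams proof (tree of finite bad sequences ordered by size, leftmost path, the quasi-order $Q'$ of immediate subtrees, Higman plus closure under products, reassembly of the embedding), and your identification of which pieces need only \ACA\ is right. But the entire content of the theorem is the calibration you explicitly defer: the claim that the way minimality is consumed is ``exactly an instance of the restricted leftmost path principle'' is announced as ``the claim to be verified'' and never verified. A proof of provability in a specific system cannot leave unexamined the one step that uses the defining axiom of that system.

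Moreover, there is a concrete reason this step is not routine. Towsner's $\Sigma_2$-$\mathsf{LPP}_0$ only guarantees a path $\Lambda$ with no path to its left that is \emph{$\Sigma^0_2$-definable from $\Lambda$} (and the parameters); it does not assert genuine leftmostness. In your proof that $Q'$ is wqo, the offending path $(t_0,\dots,t_{m-1},s_0,s_1,\dots)$ is built from an \emph{arbitrary} bad sequence $(s_i)$ in $Q'$, which is a free second-order object with no definability relation to $\Lambda$; so its existence does not contradict the restricted minimality you actually have. Bridging this --- extracting, from the mere existence of a bad sequence in $Q'$, one that is $\Sigma^0_2$-definable from $\Lambda$ --- is precisely the work Towsner does, and it is also where your stated ``source of the bound $\Sigma_2$'' goes wrong: extendibility to an infinite bad sequence is $\Sigma^1_1$, and if that were the complexity being invoked you would need $\Sigma^1_1$-$\mathsf{LPP}$, which is full \PCA\ and would render the theorem vacuous. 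Your final ``fallback'' sentence in effect concedes that the argument as written may not fit the system; until that is resolved the proposal is a plan for a proof, not a proof.
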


Unfortunately no reversal to Towsner's systems are known, so we do not know
whether the upper bound for the strength of Kruskal's theorem provided by the
previous theorem is optimal.

Rathjen and Weiermann \cite{RW93} carried out a detailed proof-theoretic
analysis of the statement \lq\lq $(\mathcal{T}, {\preceq_{\mathcal{T}}})$ is
wqo\rq\rq\ (beware that Rathjen and Weiermann call Kruskal's theorem this
statement) showing that it is equivalent over \ACA\ to the uniform \PI11
reflection principle of the theory obtained by adding transfinite induction
for \PI12 formulas to \ACA.

Harvey Friedman, inspired by ordinal notation systems, introduced a
refinement of $\preceq_{\mathcal{T}}$ (obtained by requiring that the
homeomorphic embedding satisfies a \lq\lq gap condition\rq\rq) and proved
that it still yields a wqo on $\mathcal{T}$. Friedman himself
\cite{Simpson85b} showed, generalizing the technique of theorem \ref{KT} to
larger ordinals, that this wqo statement is not provable in \PCA.\smallskip

The most striking instance of this unprovability phenomenon is provided by
the graph minor theorem, proved by Robertson and Seymour in a long series of
papers (see \cite[Section 5]{Thomassen} or \cite[Chapter 12]{Diestel} for
overviews).

\begin{definition}[\RCA]
If $\mathcal{G}$ is the set of all finite directed graphs (allowing loops and
multiple edges) define a quasi-order on $\mathcal{G}$ by setting $G_0
\preceq_m G_1$ if and only if $G_0$ is isomorphic to a minor of $G_1$ (recall
that a minor is obtained by deleting edges and vertices and contracting
edges).
\end{definition}

\begin{stat}[graph minor theorem]
$\preceq_m$ is wqo on $\mathcal{G}$.
\end{stat}

Friedman's generalization of Kruskal's theorem mentioned above plays a
significant role in some steps of the proof of the graph minor theorem, which
uses iterated applications of the minimal bad sequence lemma. This proof
cannot be carried out in \PCA\ and the following theorem (proved by Friedman,
Robertson and Seymour \cite{FRS} well before the completion of the proof of
the graph minor theorem) shows that there is no simpler proof.

\begin{theorem}\label{gmt}
The graph minor theorem (and even special cases where $\preceq_m$ is
restricted to some subset of $\mathcal{G}$) is not provable in \PCA.
\end{theorem}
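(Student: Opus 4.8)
The argument is due to Friedman, Robertson and Seymour \cite{FRS}; it reduces a restricted form of the graph minor theorem to Friedman's gap-embedding refinement of Kruskal's theorem discussed above, which we already know is not provable in \PCA. More precisely one uses the labelled versions: for each $n$ the statement \lq\lq the finite trees labelled from an $n$-element set are wqo under gap-embedding\rq\rq\ proves, provably in \ACA, the well-foundedness of an explicit primitive recursive notation system for ordinals which, as $n$ grows, eventually exceed the proof-theoretic ordinal of \PCA.

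The plan is to exhibit, for each $n$, a subclass $\mathcal{G}_n \subseteq \mathcal{G}$ and a map $\Phi_n$, definable in \RCA, sending an $n$-labelled finite tree to a finite graph in $\mathcal{G}_n$, so that \ACA\ proves
\[
\Phi_n(T_0) \preceq_m \Phi_n(T_1) \iff T_0 \text{ gap-embeds into } T_1.
\]
Each node of a labelled tree is replaced by a rigid gadget---a clique or coloured configuration whose shape encodes the label---and the gadgets are glued along the tree so that $\Phi_n(T)$ has bounded tree-width; with some care the $\Phi_n(T)$ can even be taken planar, which yields the unprovability statement already for planar graphs. Granting the displayed equivalence, suppose \PCA\ proved the graph minor theorem. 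Then it would prove its restriction to $\mathcal{G}_n$; since \ACA\ verifies that a bad sequence of $n$-labelled trees under gap-embedding maps via $\Phi_n$ to a bad sequence in $\mathcal{G}_n$, \PCA\ would prove that the $n$-labelled trees are wqo under gap-embedding, hence the well-foundedness of the associated notation system. For suitably large $n$ this contradicts the ordinal analysis of \PCA. Because the contradiction is reached already from the restriction of $\preceq_m$ to $\mathcal{G}_n$, the assertion about special cases follows as well.

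The main obstacle is faithfulness of the encoding: that \emph{every} minor of $\Phi_n(T_1)$ which happens to be isomorphic to some $\Phi_n(T_0)$ actually arises from a gap-embedding of $T_0$ into $T_1$. One must rule out deletions and contractions that would blend distinct gadgets, identify incomparable labels, or short-circuit the gap condition; this is forced by making the gadgets sufficiently rigid, and the whole verification must be arranged so as to go through in \ACA. The reverse direction---that a gap-embedding induces a minor---is immediate from the definition of $\Phi_n$. The combinatorial details of the gadgets and of the faithfulness argument are carried out in \cite{FRS}, to which we refer the reader.
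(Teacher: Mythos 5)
Your sketch is correct and is essentially the paper's own proof: the paper likewise defers entirely to Friedman, Robertson and Seymour \cite{FRS} and describes the argument as "generalizing the technique of Theorem \ref{KT} to larger ordinals," which is exactly what your reduction of restricted graph-minor statements to the gap-embedding labelled-tree theorem (itself already noted to be unprovable in \PCA) amounts to. The only detail worth double-checking against \cite{FRS} is the inessential aside that the gadget graphs can be taken planar; the theorem as stated only requires unprovability for \emph{some} subclass $\mathcal{G}_n$, which your faithfulness argument already delivers.
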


This theorem is proved once more generalizing the technique of theorem
\ref{KT} to larger ordinals. Notice also that the graph minor theorem is a
\PI11 statement, and therefore does not imply any set-existence axiom (in
fact it holds in every $\omega$-model). More recently Rathjen and Krombholz
\cite{KrombholzPhD,KrombholzRathjen} analyzed more in detail the proof by
Robertson and Seymour in search of upper bounds for the proof-theoretic
strength of this statement, showing that it can be carried out in the system
obtained by adding transfinite induction for \PI12 formulas to
\PCA.\smallskip

It is well-known that Higman's theorem does not extend to infinite sequences,
and the canonical counterexample is Rado's partial order. The notion of bqo
was developed by Nash-Williams as a way of ruling out Rado's example and its
generalizations. Indeed, one of the first theorems of the subject is a
generalization of Higman's theorem \cite{NW68}.

\begin{definition}[\RCA]
If $(Q,{\preceq})$ is a quasi-order we can extend the quasi-order $\preceq^*$
of Definition \ref{Higman} from $\Qom$ to $\Qt$, the set of all countable
sequences of elements of $Q$ (i.e., the set of all functions from a countable
well-order to $Q$).
\end{definition}

\begin{stat}[Nash-Williams' theorem]
If $Q$ is bqo then $(\Qt,{\preceq^*})$ is bqo.
\end{stat}

Notice that $\Qt$ is uncountable, and hence we express \lq\lq
$(\Qt,{\preceq^*})$ is bqo\rq\rq\ in a way similar to Definition
\ref{uncountable}.

The following theorem is \cite[Theorem 4.5]{NWT}.

\begin{theorem}\label{NWT}
\PCA\ proves Nash-Williams' theorem.
\end{theorem}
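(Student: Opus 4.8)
The plan is to formalise within \PCA\ the proof of Nash-Williams' theorem given in \cite{NW65b} --- the paper in which, as recalled above, the locally minimal bad array lemma was first used, precisely for the quasi-order $\Qt$. Apart from elementary combinatorics and some facts about bqos of strength at most \ATR, the only ingredient of that argument is the locally minimal bad array lemma applied to $\Qt$; by Theorem \ref{mbs} this lemma is provable in \PCA, so the upper bound follows. From the reverse mathematics viewpoint this is the crux: what is needed is the \emph{locally} minimal version of the lemma, equivalent to \PCA, and not the full minimal bad array lemma, whose known proofs only place it within \PPCA.

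Concretely, suppose $Q$ is bqo but, interpreting bqo-ness of the uncountable quasi-order $(\Qt,{\preceq^*})$ as in Definition \ref{uncountable}, some barrier $B'$ carries a uniformly coded family $(\alpha_s)_{s\in B'}$ of transfinite sequences which is bad with respect to $\preceq^*$. Since each $\alpha_s$ is a function from a countable well-order to $Q$, only countably many transfinite sequences occur as final segments of the $\alpha_s$, and by a routine coding I would first replace $\Qt$ by the countable sub-quasi-order $\Qt_0$ consisting of these final segments, so that the minimal bad array lemmas --- stated for quasi-orders on sets --- literally apply. Passing to a subbarrier (using the clopen Ramsey theorem, which is available well below \PCA) I may assume every $\alpha_s$ is nonempty. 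I would then run the forerunning recursion of \cite{NW65b,Laver78}: it selects, from the given bad array, a locally minimal bad array for a suitable well-founded relation on $\Qt_0$ compatible with $\preceq^*$, splits each value into a first part and a tail, uses that $Q$ is bqo to control the first parts --- passing, via the fact that every bqo is bqo(set) (of strength \ATR), to a subbarrier on which the relevant array of first parts is perfect --- and uses the minimality on the tails, eventually producing $s\tri t$ in (a subbarrier of) $B'$ with $\alpha_s\preceq^*\alpha_t$, contradicting the badness of $(\alpha_s)_{s\in B'}$.

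The hard part is the forerunning itself, both mathematically and in checking its formalisation. The delicate point is the interaction between the choice of the minimality relation and the recursion: the naive candidate ``delete the first element'' is \emph{not} well-founded on sequences of limit length, so it cannot be fed to the locally minimal bad array lemma directly, and the forerunning is exactly the device that reorganises the deletions --- coordinated along the barrier --- so that one never has to compare arbitrary transfinite sequences (which would be circular) while still reaching a contradiction. One must also verify carefully that the recursion, together with the successor/limit/empty-sequence case distinctions it involves, uses no more than $\PI11$-comprehension and the transfinite induction it supports; in particular that no transfinite induction over $\PI12$ formulas along the lengths of the sequences is required, since that is the feature that would push the proof beyond \PCA.
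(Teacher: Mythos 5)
There is a genuine gap, and it sits exactly at your central claim that \lq\lq the only ingredient of that argument is the locally minimal bad array lemma applied to $\Qt$\rq\rq. The forerunning argument for transfinite sequences does not stay on subbarriers of the given barrier $B'$: to split each $\alpha_s$ into a first part and a tail, the splitting point must be chosen depending on a successor $t$ of $s$ (with $s \tri t$), so the auxiliary array of first parts/tails is indexed by a finer barrier such as $\set{s \cup t}{s,t \in B',\ s \tri t}$, whose elements properly \emph{extend} those of $B'$ rather than belonging to $B'$. Refuting the badness of that auxiliary array requires minimality of $f'$ with respect to the relation $\leq'$ of the full minimal bad array lemma, where the comparison is made via some $s' \initeq s$; the relation $\leq'_\ell$ of the locally minimal version, which demands $B \subseteq B'$, simply cannot see arrays living on such finer barriers. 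Your last paragraph correctly identifies that the deletions must be \lq\lq coordinated along the barrier\rq\rq, but that coordination is precisely what forces you out of the locally minimal setting: this is why the paper states that the most natural proof of Nash-Williams' theorem uses the (full) minimal bad array lemma and that a new argument is needed. Since the full lemma is only known to be provable in \PPCA, your plan as stated does not yield the \PCA\ upper bound. (A smaller point: the transfinite-sequence theorem is \cite{NW68}; \cite{NW65b} is the infinite-trees paper, where the locally minimal device first appeared for a different quasi-order.)

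The proof in \cite{NWT} circumvents this obstruction by a different decomposition. The locally minimal bad array lemma is used only to prove the generalized Higman's theorem, namely that $(\Qom,{\preceq^*})$ is bqo whenever $Q$ is: for \emph{finite} sequences the relevant well-founded relation can be arranged so that the auxiliary arrays do remain on subbarriers, and local minimality suffices. Nash-Williams' theorem for $\Qt$ is then derived from the generalized Higman's theorem within \ATR\ (this is the content of Theorem~\ref{GHT}), by a transfinite recursion on the lengths of the sequences rather than by a further minimality argument. To salvage a direct forerunning proof in \PCA\ you would have to show either that the full minimal bad array lemma is provable in \PCA\ (open, as the paper notes) or that the passage to finer barriers can be eliminated from the forerunning; neither is known.
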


The most natural proof of Nash-Williams' theorem uses the minimal bad array
lemma, and therefore to prove Theorem \ref{NWT} a new argument is needed.
This is obtained by using the locally minimal bad array lemma (provable in
\PCA\ by Theorem \ref{mbs}) to establish the following weak version of
Nash-Williams' theorem.

\begin{stat}[generalized Higman's theorem]
If $Q$ is bqo then $(\Qom,{\preceq^*})$ is bqo.
\end{stat}

Assuming the generalized Higman's theorem, we can prove Nash-Williams'
theorem in \ATR. Thus the proof of Theorem \ref{NWT} yields the following
result.

\begin{theorem}[\ATR]\label{GHT}
The following are equivalent:
\begin{enumerate}[\quad (i)]
  \item Nash-Williams' theorem;
  \item the generalized Higman's theorem.
\end{enumerate}\end{theorem}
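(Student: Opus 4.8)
The implication \rr{i}$\Rightarrow$\rr{ii} needs only \RCA: $\Qom$ is precisely the sub-quasi-order of $\Qt$ consisting of the sequences whose index well-order is finite, so the generalized Higman's theorem is the restriction of Nash-Williams' theorem, and the latter implies the former by Lemma \ref{RCAsubsetbqo}. The whole content of the statement is therefore the converse; this is exactly the second half of the proof of Theorem \ref{NWT}, and the plan is to reconstruct it inside \ATR. Fix a bqo $Q$ and a barrier $B$ with a map $f\colon B\to\Qt$; the goal is to show that $f$ is good. A first reduction, in \ATR, normalizes the index well-orders: the domains $\operatorname{dom}(f(s))$ form a set of countable well-orders, so by comparability of well-orders (equivalent to \ATR) one fixes a single countable well-order $W$ and regards each $f(s)$ as a function from an initial segment of $W$; since every well-order is bqo and hence --- as we are in \ATR, where ``every bqo is bqo(set)'' holds --- also bqo(set), on a subbarrier the array $s\mapsto\operatorname{dom}(f(s))$ is perfect, i.e.\ $s\tri t$ forces $\operatorname{dom}(f(s))$ to be an initial segment of $\operatorname{dom}(f(t))$.

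The second reduction uses Cantor normal forms: every transfinite $Q$-sequence $\sigma$ is a finite concatenation $\sigma_0\conc\dots\conc\sigma_{k-1}$ of $Q$-sequences of additively indecomposable length, with non-increasing lengths. Writing $R$ for the quasi-order of $Q$-sequences of additively indecomposable length under $\preceq^*$, the map $\sigma\mapsto(\sigma_0,\dots,\sigma_{k-1})$ carries $\Qt$ into the Higman order on $R^{<\N}$; it is not an order-embedding, but it \emph{reflects} $\preceq^*$ (a Higman relation between two block decompositions concatenates into a $\preceq^*$-embedding between the sequences), so a bad array into $\Qt$ would yield a bad array into $R^{<\N}$. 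By the generalized Higman's theorem it therefore suffices to prove that $R$ is bqo. Now a $Q$-sequence of additively indecomposable length $\omega^\beta$, $\beta>0$, splits along a fixed cofinal $\omega$-sequence of lower indecomposable lengths into $\omega$ many blocks, each a $Q$-sequence of additively indecomposable length strictly below $\omega^\beta$, hence (for a bad array, whose lengths are bounded by some countable $\delta$) in a fixed bqo sub-quasi-order of the earlier stages. Combining this with the closure of bqos under finite products (Lemma \ref{du=cup=times}, provable in \ATR), a transfinite recursion of height $\delta$ reduces the bqo-ness of $R$ --- and thus all of Nash-Williams' theorem --- to one statement: \emph{the $\preceq^*$-order on the $\omega$-sequences over a bqo is again bqo}.

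That last statement is the crux, and it is where \ATR\ enters in an essential way beyond comparability of well-orders. One derives it from the generalized Higman's theorem by a clopen Ramsey argument --- the clopen Ramsey theorem being itself equivalent to \ATR: from a putative bad array into $P^\omega$ ($P$ bqo) one passes, via homogeneity for suitable clopen partitions of the relevant space of infinite sets, to a subbarrier on which the array is controlled coordinate by coordinate, and from this data one extracts a bad array into a \emph{finite} power of $P$, contradicting the generalized Higman's theorem applied to $P$. Making all the ingredients --- the normalization of index well-orders, the Cantor-normal-form reflection, the finite-product closure, the transfinite recursion, and the clopen Ramsey argument --- cooperate while keeping everything inside \ATR\ (rather than, say, invoking $\Sigma^1_1$ dependent choice to run the recursion, or Borel Ramsey to trivialize the array) is the delicate part; I expect the limit stages of the recursion, where one must pass coherently through a $\vartriangleleft$-pair that is ``seen'' only by an initial segment of the approximating subbarriers, to be the main obstacle. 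This is exactly the work carried out in the cited proof of Theorem \ref{NWT}, and Theorem \ref{GHT} simply records that, over \ATR, nothing more than the generalized Higman's theorem is needed.
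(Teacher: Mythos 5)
You correctly isolate the content of the theorem: \rr{i}$\Rightarrow$\rr{ii} is trivial because a bad array into $\Qom$ is in particular a bad array into $\Qt$, and the paper's own ``proof'' of \rr{ii}$\Rightarrow$\rr{i} is just a pointer to the second half of the proof of Theorem \ref{NWT}, i.e.\ of \cite[Theorem 4.5]{NWT}. Your skeleton for that direction (normalize the index well-orders via comparability of well-orders, decompose by Cantor normal form into sequences of additively indecomposable length, observe that the decomposition map reflects $\preceq^*$, and recurse) is the standard one. The problem is that the one step you describe in detail --- the crux, namely deriving ``$P^\omega$ is bqo'' from the generalized Higman's theorem --- is wrong as stated. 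No clopen Ramsey argument can confine a bad array into $P^\omega$ to ``a finite power of $P$'': whether one $\omega$-sequence embeds into another is not determined by any finite set of coordinates, and the least length of a non-embedding initial segment need not be bounded along the array, nor can a map from a barrier to $\N$ in general be made constant on a subbarrier. The correct reduction uses instead the finite-witness property of $\preceq^*$ on $\omega$-sequences (by the greedy construction of embeddings, $\sigma \preceq^* \tau$ holds iff every finite initial segment of $\sigma$ embeds into $\tau$): from a bad array $f\colon B\to P^\omega$ one defines, on the derived barrier $\set{s\cup t}{s,t\in B,\ s\tri t}$, the map sending $s\cup t$ to the shortest initial segment of $f(s)$ that does not embed into $f(t)$, and checks that this is a bad array into $P^{<\N}$ with the Higman order --- contradicting the generalized Higman's theorem applied to $P$, with no appeal to clopen Ramsey at all.

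The second gap is the recursion itself. As you set it up, the induction hypothesis is ``the quasi-order of sequences of indecomposable length below $\omega^\beta$ is bqo'', a \PI12 statement, and transfinite induction for \PI12 formulas is far beyond \ATR. The cited proof avoids this by anchoring the whole induction to the single bad array one starts from --- bounding all the lengths by one well-order and descending along data extracted from that fixed array --- so that what is actually inducted on stays within reach of arithmetical transfinite recursion. You flag the limit stages as ``the main obstacle'' and explicitly defer to the cited proof, which is honest, but it means the proposal records the shape of the argument rather than supplying it; combined with the incorrect treatment of the $\omega$-sequence case, the substantive direction \rr{ii}$\Rightarrow$\rr{i} is not established.
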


Nash-Williams' theorem cannot imply \PCA, even over \ATR\ \cite[Theorem
5.7]{NWT}. In fact, the proof of Theorem \ref{NWT} actually establishes a
\PI12 statement that, over \ATR, implies Nash-Williams' theorem. The argument
mentioned before Theorem \ref{KT} then establishes the assertion. (Both
Nash-Williams' theorem and the generalized Higman's theorem are \PI13
statements, so we cannot apply the argument directly.) Towsner \cite{Tow:imp}
looked also at the proof of the locally minimal bad array lemma.

\begin{theorem}\label{Tow:NWT}
The generalized Higman's theorem, and therefore also Nash-Williams'
theorem, is provable in Towsner's system $\mathsf{TLPP}_0$.
\end{theorem}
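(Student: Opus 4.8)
The plan is to adapt Towsner's leftmost-path-principle machinery so that it applies to the locally minimal bad array lemma rather than (as in the case of Kruskal's theorem, Theorem \ref{Tow:KT}) to the minimal bad sequence lemma directly. The crucial observation is that by Theorem \ref{GHT} we already know, provably in \ATR, that Nash-Williams' theorem follows from the generalized Higman's theorem; moreover, by Theorem \ref{NWT} and the surrounding discussion the generalized Higman's theorem is itself deduced from the \emph{locally} minimal bad array lemma, which by Theorem \ref{mbs} is exactly as strong as \PCA\ (unlike the full minimal bad array lemma, whose strength is open). So the real content of the theorem is: Towsner's system $\mathsf{TLPP}_0$ suffices to run the locally minimal bad array argument underpinning the generalized Higman's theorem, after which the remaining deductions take place in \ATR\ (which sits well below $\mathsf{TLPP}_0$).

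First I would isolate the precise use of the leftmost path principle in the standard proof of the locally minimal bad array lemma — one considers the tree of candidate ``partial arrays'' ordered by the relevant initial-segment relation, and the leftmost infinite path through this tree yields the locally minimal bad array. Towsner's weakening $\mathsf{TLPP}_0$ replaces the full leftmost path principle by a version in which the tree is required to be, roughly speaking, $\Sigma_2$-definable in a suitable sense relative to the arithmetical data; so the key verification is that the tree of partial arrays arising here, together with the ordering used to select a minimal one, has exactly the definitional complexity that Towsner's system can handle. This parallels what Towsner did for Kruskal's theorem with $\Sigma_2\text{-}\mathsf{LPP}_0$, and for Nash-Williams' theorem the relevant system is the slightly different $\mathsf{TLPP}_0$; I would check that the forerunning/local-minimality construction — which only compares arrays ``pointwise on a common barrier'' via $\leq'_\ell$, rather than via the more global $\leq'$ of the full minimal bad array lemma — keeps the complexity at the $\mathsf{TLPP}_0$ level. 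Then, with the locally minimal bad array lemma available in $\mathsf{TLPP}_0$, I would run Nash-Williams' own argument to derive the generalized Higman's theorem, and finally invoke Theorem \ref{GHT} to get Nash-Williams' theorem itself, all of this remaining inside $\mathsf{TLPP}_0$ since \ATR\ is far weaker.

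The main obstacle is the definability bookkeeping: making sure that the tree whose leftmost path is extracted in the proof of the locally minimal bad array lemma really does fall under Towsner's $\mathsf{TLPP}_0$ restriction, and that the induction used to pass from that path back to a genuine bad array (and then to the failure of the generalized Higman's theorem) does not secretly require more than $\mathsf{TLPP}_0$ provides. One has to be careful because barriers, the $\tri$ relation, and the operation of passing to subbarriers all involve $\Pi^1_1$-flavoured quantification over $\Sq{\base(B)}$, and naively this could push the relevant tree above the complexity bound; the resolution is presumably, as in Towsner's treatment of the other wqo theorems, to phrase everything in terms of the combinatorics of finite approximations so that the genuinely second-order content is quarantined into the leftmost-path instance. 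I would also need to confirm that $\mathsf{TLPP}_0$ is indeed strong enough to carry out the \ATR-level reasoning of Theorem \ref{GHT}; since Towsner's systems all extend \ATR\ this should be automatic, but it is worth stating explicitly. Everything else — the reduction via Theorem \ref{GHT}, the verification that the minimal bad array argument for the \emph{local} version uses only the locally minimal bad array lemma — is routine given the results already in the excerpt.
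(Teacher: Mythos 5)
Your overall route --- isolate the leftmost-path instance hidden in the proof of the locally minimal bad array lemma, check its definitional complexity against Towsner's restriction, derive the generalized Higman's theorem, and then pass to Nash-Williams' theorem via Theorem \ref{GHT} (which is unproblematic since $\mathsf{TLPP}_0$ extends \ATR) --- is exactly the route the paper attributes to Towsner, so the decomposition is right. But there is a genuine error at the pivot of your argument: you claim that after the complexity check ``the locally minimal bad array lemma [is] available in $\mathsf{TLPP}_0$.'' That cannot happen. By Theorem \ref{mbs} the locally minimal bad array lemma is \emph{equivalent} to \PCA\ over \RCA, while $\mathsf{TLPP}_0$ is strictly weaker than \PCA\ --- if it were not, Theorem \ref{Tow:NWT} would add nothing to Theorem \ref{NWT}, and the whole point of Towsner's intermediate systems would evaporate. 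The distinction you invoke between $\leq'_\ell$ and $\leq'$ is what separates the locally minimal bad array lemma (provable in, indeed equivalent to, \PCA) from the full minimal bad array lemma (only known to be provable in \PPCA); it does nothing to push the local version below \PCA.

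What actually has to be shown is that the \emph{particular instances} of the locally minimal bad array lemma used in deriving the generalized Higman's theorem --- the instance for the quasi-order $(\Qom,{\preceq^*})$ with its specific compatible well-founded relation, where the tree whose leftmost path one extracts is assembled from concrete finitary data --- fall under the syntactic restriction built into $\mathsf{TLPP}_0$. The universal quantification over arbitrary quasi-orders and arbitrary compatible well-founded relations in the general lemma is precisely what powers the reversal to \PCA\ in Theorem \ref{mbs}, so it must be given up. Your ``definability bookkeeping'' paragraph is the beginning of the right analysis, but it has to be carried out for the specific application rather than for the lemma in general; once you restate your middle step as ``the instance of the locally minimal bad array lemma needed here is provable in $\mathsf{TLPP}_0$,'' the remainder of your outline goes through as the paper describes.
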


$\mathsf{TLPP}_0$ is much stronger than the system
$\Sigma_2$-$\mathsf{LPP}_0$ appearing in Theorem \ref{Tow:KT}. Unfortunately,
as already mentioned, no reversal to Towsner's systems are known, so Theorem
\ref{Tow:NWT} provides just an upper bound for the strength of Nash-Williams'
theorem. Regarding lower bounds, Shore \cite{Shore} proved the following
important result.

\begin{theorem}[\RCA]\label{Shore}
The following are equivalent:
\begin{enumerate}[\quad (i)]
  \item \ATR
  \item every infinite sequence of countable well-orders contains two
      distinct elements which are comparable with respect to embeddability
      (as defined in Definition \ref{def:embed} below).
\end{enumerate}\end{theorem}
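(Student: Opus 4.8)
The implication $(i) \Rightarrow (ii)$ is immediate: over \RCA, \ATR\ proves (indeed is equivalent to) the comparability of countable well-orderings --- any two are comparable with respect to embeddability, in fact one embeds onto an initial segment of the other (see \cite[\S V.6]{sosoa}) --- so in any infinite sequence of well-orders already the zeroth and first terms are comparable.

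The substantial direction is $(ii) \Rightarrow (i)$, due to Shore. The plan is to derive from $(ii)$ the \SI11-separation principle, which is equivalent to \ATR\ over \RCA\ by \cite[Theorem V.5.1]{sosoa}. One first checks that $(ii)$ implies \ACA\ --- this requires a separate argument, since the obvious coding of the range of a one-to-one function into a sequence of well-orders produces too many cheap comparabilities, and one must instead argue from a direct failure of \ACA; see \cite{Shore} --- and then works inside \ACA. So suppose, toward a contradiction, that $\varphi_0(n)$ and $\varphi_1(n)$ are \SI11\ formulas with $\forall n\, \neg(\varphi_0(n) \land \varphi_1(n))$ for which no separating set exists. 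Using \ACA\ one fixes, uniformly in $n$, trees $T^0_n, T^1_n \subseteq \Seq$ with $\varphi_i(n)$ equivalent to ``$T^i_n$ is ill-founded'', so that for every $n$ at least one of the Kleene--Brouwer orders $\KB(T^0_n), \KB(T^1_n)$ is a well-order. The heart of the proof is to assemble these data into a single infinite sequence $\langle W_n \rangle_{n \in \N}$ of well-orders, each $W_n$ built by a suitable \emph{merging} of $\KB(T^0_n)$ and $\KB(T^1_n)$ (together with a large ``marker'' well-order depending only on $n$) so that each $W_n$ is a well-order whichever of the two trees happens to be the well-founded one, while an embedding $W_m \preceq W_n$ with $m \ne n$ can hold only when the pair $(T^0_m, T^1_m)$ stands in a sufficiently rigid relation to $(T^0_n, T^1_n)$ --- rigid enough that such an embedding exhibits, at the relevant coordinate, which of the two trees is well-founded. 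Feeding $\langle W_n \rangle$, together with its tails and subsequences, to $(ii)$ then produces comparable pairs at infinitely many coordinates, and patching the information read off from the corresponding embeddings yields a set separating $\varphi_0$ from $\varphi_1$, the desired contradiction.

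The main obstacle is precisely this construction: designing the merging so that each $W_n$ is a genuine well-order regardless of the (unknown) well-founded choice, yet rigid enough that a single comparison forces a binary decision, and then organizing the bookkeeping so that the comparable pairs delivered by $(ii)$ across all subsequences combine into a \emph{total} separating set rather than finitely much data. We refer to \cite{Shore} for the details.
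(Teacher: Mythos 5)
The survey itself gives no proof of this theorem: it is quoted from \cite{Shore}, so there is no in-paper argument to match your sketch against. Your direction \rr{i}$\Rightarrow$\rr{ii} is correct and is indeed immediate from the equivalence of \ATR\ with comparability of countable well-orderings. For the reversal you have correctly identified the target (\SI11-separation, after a bootstrap to \ACA) and the raw materials (trees $T^0_n, T^1_n$ with at least one of $\KB(T^0_n)$, $\KB(T^1_n)$ a well-order), but the two steps that carry all the weight are left as black boxes, and one of them, as you describe it, cannot work in the form stated.

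First, the ``merging''. Any linear order that contains a copy of an ill-founded linear order is itself ill-founded, so no sum, product, shuffle, or marker-decorated combination of $\KB(T^0_n)$ and $\KB(T^1_n)$ can be ``a well-order whichever of the two trees happens to be the well-founded one'' if it actually embeds both $\KB$ orders. The entire difficulty of Shore's construction is to produce from the pair of trees a linear order that is provably (in \ACA, without knowing which tree is well-founded) a well-order, and this forces a design in which the possibly ill-founded $\KB$ order is never placed inside $W_n$ as a suborder; naming the step ``a suitable merging'' does not engage with this obstruction. Second, the ``patching''. A single application of \rr{ii} yields one comparable pair, hence information about at most finitely many coordinates. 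Iterating over tails and subsequences requires selecting witnesses (the embeddings) to infinitely many \SI11 statements, which is a choice principle of roughly the strength of \AC\ and is not available in the base theory you are working in; and even granting that, you would decide only a sparse, uncontrolled set of coordinates, not obtain a total separating set. The arguments of this kind (Shore's, and the Friedman--Hirst derivation of \ATR\ from weak comparability) avoid this by arranging that each $W_n$ encodes the \emph{entire} family of trees, with ordinal markers rigid enough that a single embedding $W_m \preceq_{\mathcal{L}} W_n$, for whichever pair $m \neq n$ the hypothesis happens to supply, already computes a full separating set. As written, your sketch records the statement of the problem rather than a proof of it.
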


It is immediate that Nash-Williams' theorem implies \rr{ii}, and hence \ATR,
within \RCA.

\begin{question}\label{q:NWT}
Is Nash-Williams' theorem equivalent to \ATR?
\end{question}

A positive answer to this question was conjectured in
\cite{NWT,wqobqo}.\smallskip

Connected to Nash-Williams' theorem is one of the most famous achievements of
bqo theory, Laver's proof \cite{Laver71} of Fra\"{\i}ss\'{e}'s conjecture
\cite{Fraisse}. Laver actually proved a stronger result (even stronger than
the one we state below) and we keep the two statements distinct.

\begin{definition}[\RCA]\label{def:embed}
If $\mathcal{L}$ is the set of countable linear orderings define the
quasi-order of embeddability on $\mathcal{L}$ by setting $L_0
\preceq_\mathcal{L} L_1$ if and only if there exists an order-preserving
embedding of $L_0$ in $L_1$, i.e., an injective $f: L_0 \to L_1$ such that $x
<_{L_0} y$ implies $f (x) <_{L_1} f(y)$ for every $x,y \in L_0$.
%
\end{definition}

\begin{stat}[Fra\"{\i}ss\'{e}'s conjecture]
$(\mathcal{L},{\preceq_\mathcal{L}})$ is wqo.
\end{stat}

\begin{stat}[Laver's theorem]
$(\mathcal{L}, {\preceq_{\mathcal{L}}})$ is bqo.
\end{stat}

Again, $\mathcal{L}$ is uncountable, and hence we express \lq\lq
$(\mathcal{L},{\preceq_\mathcal{L}})$ is wqo (bqo)\rq\rq\ by imitating
Definition \ref{uncountable}.

The strength of Fra\"{\i}ss\'{e}'s conjecture is one of the most important open
problems about the reverse mathematics of wqo and bqo theory. All known
proofs of Fra\"{\i}ss\'{e}'s conjecture actually establish Laver's theorem. Basically
only one proof was known until 2016: this proof uses the minimal bad array
lemma and can be carried out in \PPCA. Recently Antonio Montalb\'{a}n
\cite{FraPI11} made a major breakthrough by finding a new proof, which avoids
any form of \lq\lq minimal bad\rq\rq\ arguments. This proof is based on
Montalb\'{a}n's earlier analysis of Fra\"{\i}ss\'{e}'s conjecture \cite{MonFC} and uses
the Ramsey property for subsets of $\Sq \N$ and determinacy, yielding the
following result.

\begin{theorem}\label{FC}
\PCA\ proves Fra\"{\i}ss\'{e}'s conjecture and Laver's theorem.
\end{theorem}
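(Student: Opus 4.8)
The plan is to follow Montalbán's recent argument \cite{FraPI11}, which reduces Laver's theorem (and hence Fra\"{\i}ss\'{e}'s conjecture, since every proof of the latter actually establishes the former) to principles available in \PCA. First I would recall the machinery from Montalbán's earlier structural analysis \cite{MonFC}: there one obtains a ``hierarchical'' decomposition of countable linear orders in terms of sums indexed by ordinals and well-order/reverse-well-order building blocks, allowing Laver's theorem to be approached by an induction on this hierarchy rather than by a minimal bad array argument. The key technical inputs are the Ramsey property for (sufficiently definable) subsets of $\Sq\N$ and a determinacy principle for the relevant pointclasses; the heart of \cite{FraPI11} is that these combinatorial facts, together with the structural decomposition, suffice to push through the bqo conclusion.

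The steps, in order, would be: (1) fix the coding of $\mathcal{L}$ as in Definition~\ref{def:embed} and set up the ``uncountable bqo'' formulation imitating Definition~\ref{uncountable}; (2) formalize in \PCA\ the relevant fragment of the Ramsey property for subsets of $\Sq\N$ and the determinacy statement used by Montalbán, checking that the pointclasses involved stay within $\boldsymbol\Pi^1_1$ comprehension plus arithmetical transfinite recursion (which is available since \ATR\ is much weaker than \PCA); (3) reconstruct, inside \PCA, the structural decomposition of countable linear orders from \cite{MonFC}, taking care that the transfinite recursions defining it can be carried out with the comprehension and induction available; (4) run Montalbán's main argument to conclude that $(\mathcal{L},{\preceq_\mathcal{L}})$ is bqo, and then observe that this immediately yields ``$\mathcal{L}$ is wqo'' since \RCA\ proves every bqo is wqo (and the same holds for the uncountable versions by the remarks after Definition~\ref{uncountable}).

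The main obstacle I expect is step (2)--(3): verifying that Montalbán's use of determinacy and the Ramsey property, which in the original paper is presented in a set-theoretic style, can be replaced by instances provable in \PCA. This requires identifying exactly which determinacy principle is needed (Borel determinacy is far too strong for \PCA, so one must isolate a low-level fragment, presumably at the level of $\boldsymbol\Sigma^0_n$ or open/clopen games along well-orders) and then checking that the closure properties of $\boldsymbol\Pi^1_1$-comprehension are enough to carry the transfinite recursions that build the decomposition and verify the Ramsey-theoretic lemmas. A secondary subtlety is bookkeeping the induction: the decomposition is indexed by countable ordinals, and one must ensure that the transfinite induction needed to verify the bqo property along this hierarchy does not exceed what \PCA\ proves — this is where one leans on the fact that \PCA\ proves $\boldsymbol\Pi^1_1$-transfinite recursion and hence handles recursions of this shape. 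Since this is essentially a careful reverse-mathematical audit of an existing proof rather than a new mathematical idea, the work is in the formalization details; I would present the result as a formalization of \cite{FraPI11} and refer to that paper for the combinatorial core.
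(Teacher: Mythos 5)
Your overall strategy --- formalize Montalb\'{a}n's proof from \cite{FraPI11} inside \PCA\ --- is exactly what the paper does, and the two ingredients you name (a Ramsey property for subsets of $\Sq\N$ and a determinacy principle) are indeed the ones used. However, your reconstruction of the argument's architecture misses its central pivot, and as a result your steps (2)--(3) worry about the wrong difficulties. Montalb\'{a}n's key move is to introduce the notion of \DE02-bqo, obtained by using \DE02 functions in Simpson's reformulation of bqo (the one given after Definition~\ref{def:bqo}), and to prove two things: first, that \DE02-bqo coincides with bqo, using the fact that \SI02 sets are Ramsey --- this is where the full strength of \PCA\ enters, since that Ramsey property is known to be equivalent to \PCA; second, working only in \ATR\ and using \DE01-determinacy (which is equivalent to \ATR, so there is no delicate calibration of determinacy fragments to perform), that the single statement \lq\lq $3$ is \DE02-bqo\rq\rq\ implies Laver's theorem. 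Since \ATR\ proves that $3$ is bqo (item \rr{2} of Theorem~\ref{finitebqo}), the equivalence of bqo with \DE02-bqo closes the argument in \PCA.

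Concretely: the load-bearing reduction is to the finite quasi-order $3$, not to a transfinite induction along the hierarchical decomposition of \cite{MonFC} whose bookkeeping you flag as the main obstacle. That decomposition machinery feeds into the \ATR\ part of the argument, but the question of whether the whole proof fits in \PCA\ is settled by the two calibrations just mentioned (\SI02-Ramsey equivalent to \PCA, \DE01-determinacy equivalent to \ATR), not by auditing transfinite recursions against \PI11-transfinite recursion or by isolating some $\boldsymbol\Sigma^0_n$ determinacy fragment. Without identifying the \DE02-bqo notion and the reduction to \lq\lq $3$ is \DE02-bqo\rq\rq\ --- which is also what ties Theorem~\ref{FC} to Question~\ref{q:3bqo} --- your sketch does not yet capture why the proof lands in \PCA\ rather than in the \PPCA\ needed for the minimal bad array argument.
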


Montalb\'{a}n defines \DE02-bqo by using \DE02 functions in Simpson's definition
of bqo given after Definition \ref{def:bqo}. Using the fact that \SI02 sets
are Ramsey (which is known to be equivalent to \PCA), he then shows that this
notion is equivalent to bqo. Within \ATR, using \DE01-determinacy (which is
equivalent to \ATR) Montalb\'{a}n proves that if $3$ is \DE02-bqo then Laver's
theorem holds. Since $3$ is bqo is provable in \ATR\ by item \rr{2} of
Theorem \ref{finitebqo} the proof of Theorem \ref{FC} in \PCA\ is then
complete.

Theorem \ref{Shore} entails that Fra\"{\i}ss\'{e}'s conjecture (and a fortiori Laver's
theorem) implies \ATR. Moreover Fra\"{\i}ss\'{e}'s conjecture is a \PI12 statement and
the usual considerations yield that \ATR\ plus Fra\"{\i}ss\'{e}'s conjecture cannot
imply \PCA. Montalb\'{a}n's proof shows that to prove Fra\"{\i}ss\'{e}'s conjecture in any
theory weaker than \PCA\ it suffices to prove that $3$ is \DE02-bqo. Thus an
unexpected connection with Question \ref{q:3bqo} comes up. Indeed, Montalb\'{a}n
shows that by mimicking the proof of item \rr1 of Theorem \ref{finitebqo} it
is easy to see that \ATR\ proves that $2$ is \DE02-bqo.

\begin{question}\label{q:FC}
Is Fra\"{\i}ss\'{e}'s conjecture equivalent to \ATR? Is \lq\lq $3$ is \DE02-bqo\rq\rq\
provable in \ATR?
\end{question}

A couple more results about Fra\"{\i}ss\'{e}'s conjecture are worth mentioning. First,
Montalb\'{a}n \cite{MonFC} showed that Fra\"{\i}ss\'{e}'s conjecture is equivalent, over
\RCA\ plus \SI11-induction, to a result about countable linear orders known
as Jullien's theorem. Therefore if the answer to Question \ref{q:FC} is
negative then Fra\"{\i}ss\'{e}'s conjecture defines a system intermediate between
\ATR\ and \PCA\ which is equivalent to other mathematical theorems.

On the other hand, Marcone and Montalb\'{a}n \cite{MM09} studied the restriction
of Fra\"{\i}ss\'{e}'s conjecture to linear orders of finite Hausdorff rank. To state
the result recall that \ACApl\ and \ACApr\ are obtained by adding to \RCA\
respectively \lq\lq for every $X$, $X^{(\omega)}$ (the arithmetic jump of
$X$) exists\rq\rq\ and \lq\lq for every $X$ and $k$, $X^{(k)}$ exists\rq\rq.
\ACApl\ is strictly weaker than \ATR\ but strictly stronger than \ACApr,
which in turn is strictly stronger than \ACA. The ordinal $\varphi_2(0)$ is
the first fixed point of the $\varepsilon$ function: in \RCA\ we can define a
linear order representing this ordinal, but showing that it is a well-order
requires much stronger theories, since this is the proof-theoretic ordinal of
\ACApl.

\begin{theorem}
\ACApl\ plus \lq\lq $\varphi_2(0)$ is a well-order\rq\rq\ proves the
restriction of Fra\"{\i}ss\'{e}'s conjecture to linear orders of finite Hausdorff
rank, which in turn implies, over \RCA, \ACApr\ plus \lq\lq $\varphi_2(0)$ is
a well-order\rq\rq.
\end{theorem}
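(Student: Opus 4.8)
The plan is to prove the two implications separately, in each running an induction on Hausdorff rank that parallels the classical structure theory of scattered linear orders; the backbone of the forward direction is the technique of reification by well-orders (as in the discussion following Theorem~\ref{ACAHig}), and the backbone of the reverse direction is a Friedman-style embedding of an ordinal notation system into the relevant class of linear orders.

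For the forward direction I would work in \ACApl\ plus ``$\varphi_2(0)$ is a well-order'' and first formalize Hausdorff's analysis of scattered orders of finite rank: there is a rank function to \N, an order has rank $0$ iff it is finite, and an order has rank $\le n+1$ iff it is a sum $\sum_{i\in\zeta}L_i$ with $\zeta$ one of $\omega$, $\omega^*$ or $\mathbb{Z}$ (or finite) and each $L_i$ of rank $\le n$; the closure of arithmetical comprehension under the $\omega$-jump available in \ACApl\ is what lets one run this analysis, and the recursions below, uniformly in $n$. Writing $\mathcal H_n$ for the quasi-order of countable linear orders of Hausdorff rank $\le n$ under $\preceq_{\mathcal L}$, the crux is a lemma proved by induction on $n$: $\mathcal H_n$ admits a reification by a well-order of order type $\gamma_n$, where $\gamma_0=\omega$ and $\gamma_{n+1}=\varphi_1(\gamma_n)$, so that the $\gamma_n$ are cofinal in $\varphi_2(0)$. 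The base case is the obvious reification of the finite linear orders by $\omega$ (a bad sequence of finite orders is strictly decreasing in size). For the inductive step one must analyze an embedding between two $\zeta$-indexed sums of $\mathcal H_n$-orders: after a Higman/Nash-Williams style bookkeeping of how the two index sets are matched, such an embedding decomposes into finitely describable index data together with embeddings between the pieces, and a node of $\Bad(\mathcal H_{n+1})$ --- which records that all potential embeddings among a finite string of such sums are blocked up to some finite stage --- can then be assigned an ordinal below $\varphi_1(\gamma_n)$ by feeding the piece-data through the reification of $\mathcal H_n$ given by induction. It is precisely the restriction to $\mathbb{Z}$-type index sets, rather than the arbitrary ordinal-indexed sums that would appear without the finite-rank hypothesis, that keeps the cost of one Hausdorff level down to a single application of $\varphi_1$. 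Granting the lemma, ``$\varphi_2(0)$ is a well-order'' makes each $\gamma_n$ --- an initial segment of $\varphi_2(0)$ --- a well-order, hence each $\mathcal H_n$ is wqo, and a diagonal argument over $n$ yields that the whole class of finite-Hausdorff-rank linear orders is wqo, which is the restricted conjecture.

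For the reverse direction I would argue, over \RCA, that the restricted conjecture implies both halves of the conclusion. Since the restricted conjecture implies the relevant case of Higman's theorem it implies \ACA\ (Theorem~\ref{ACAHig}), and an iterated version of the standard construction showing that Higman's theorem implies \ACA\ --- now nesting $\mathbb{Z}$-, $\omega$- and $\omega^*$-sums of finite linear orders to depth $n$ in place of finite sequences --- yields, for each fixed $n$ and each $X$, the existence of $X^{(n)}$; since this is uniform in $n$ one gets \ACApr. For ``$\varphi_2(0)$ is a well-order'' I would build a primitive recursive map $\psi$ from a notation system for the ordinals below $\varphi_2(0)$ into the linear orders of finite Hausdorff rank, mirroring the $\varphi_1$-by-$\varphi_1$ stratification of the forward lemma (nested $\mathbb{Z}$-type sums at each $\varphi_1$-level), and verify in \ACA\ that $\psi(\alpha)\preceq_{\mathcal L}\psi(\beta)$ implies $\alpha\le_o\beta$. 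Then any infinite $\le_o$-descending sequence of notations maps to a bad sequence of finite-Hausdorff-rank linear orders, contradicting the restricted conjecture, so the notation system --- and with it $\varphi_2(0)$ --- is a well-order.

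The main obstacle is the inductive step of the reification lemma: one has to reproduce Laver's embeddability analysis of $\mathbb{Z}$-indexed sums without the minimal bad array lemma (unavailable in \ACApl), staying within arithmetical comprehension relative to finitely many jumps, while pinning down the ordinal bookkeeping so that it advances by exactly one application of $\varphi_1$ per Hausdorff rank --- a cruder estimate would overshoot $\varphi_2(0)$ and force a stronger theory, a more optimistic one would not be provable. A secondary point, which explains the \ACApl\ versus \ACApr\ asymmetry in the statement, is that the forward direction's recursion consumes an $n$-th jump at stage $n$ and hence needs the full $\omega$-jump, whereas for each fixed $n$ the reverse direction's constructions invoke only finitely many jumps and so can extract no more than \ACApr.
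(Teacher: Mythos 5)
The paper does not actually prove this theorem; it is quoted from Marcone and Montalb\'{a}n \cite{MM09}, so your outline can only be measured against that proof. Its overall architecture --- an ordinal reification of the finite-rank linear orders for the upper bound, and, for the lower bound, an iterated-jump coding plus a Friedman-style embedding of a notation system for $\varphi_2(0)$ into finite-rank orders --- has the right shape and matches \cite{MM09} in spirit. But the entire mathematical content of the upper bound sits inside the step you yourself label ``the main obstacle'': analyzing an embedding of one $\Z$-indexed sum of rank-$n$ orders into another without bqo machinery. Such an embedding need not respect the block structure at all (one summand $L_i$ may be spread across infinitely many summands $M_j$, and infinitely many $L_i$ may be absorbed into a single $M_j$), so the claim that it ``decomposes into finitely describable index data together with embeddings between the pieces'' is precisely what must be proved. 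In \cite{MM09} this is handled by replacing each finite-rank order with a finite combinatorial invariant extracted via iterated finite condensations --- which is also where \ACApl\ is concretely spent (the $n$-fold condensation of a given order costs $n$ jumps), and which is what makes a reification argument meaningful at all: the technique described after Theorem \ref{ACAHig} is set up for countable quasi-orders, whereas your $\mathcal{H}_n$ is a proper class.

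A second, sharper gap is the concluding ``diagonal argument over $n$.'' The restricted conjecture quantifies over sequences $(L_i)_{i\in\N}$ of finite-rank orders whose ranks may be unbounded, and a union of countably many wqos need not be wqo (an infinite antichain is a union of finite ones). So proving that each $\mathcal{H}_n$ is wqo does not yield the theorem: you need a single reification of the whole class into $\varphi_2(0)$, or at least level-by-level reifications that cohere as $n$ grows, so that a bad sequence of unbounded rank still yields an infinite descending sequence below $\varphi_2(0)$. This is exactly why the calibrating ordinal is $\varphi_2(0)=\sup_n\gamma_n$ rather than any single $\gamma_n$. The reverse direction of your outline (iterating the Higman-to-\ACA\ coding for \ACApr, and a primitive recursive $\psi$ with $\psi(\alpha)\preceq_\mathcal{L}\psi(\beta)\Rightarrow\alpha\le_o\beta$ for the well-orderedness of $\varphi_2(0)$) is essentially correct in outline, though the exact bookkeeping $\gamma_{n+1}=\varphi_1(\gamma_n)$ is asserted rather than justified.
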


\section{A topological version of wqos}\label{sec:topo}

Recall that $Q$ wqo does not imply that $\Pow Q$ with respect to
$\preceq^\flat$ or $\Pf Q$ with respect to $\preceq^\sharp$ is wqo. However
we can still draw some conclusions about these partial orders if we weaken
the conclusion, using a topological notion.

If $(Q,{\preceq})$ is quasi-order we can use $\preceq$ to define a number of
different topologies on $Q$. These include the \emph{Alexandroff topology}
(whose closed sets are the initial intervals of $Q$) and the \emph{upper
topology} (whose basic closed sets are of the form $\set{x \in Q}{\exists y
\in F\, x \preceq y}$ for $F \subseteq Q$ finite). The topological notion
that turns out to be relevant is the following: a topological space is
\emph{Noetherian} if it contains no infinite strictly descending sequences of
closed sets. It turns out that $Q$ is wqo if and only if the Alexandroff
topology on $Q$ is Noetherian, and that $Q$ wqo implies that the upper
topology on $Q$ is Noetherian. Goubault-Larrecq \cite{GL07} proved that $Q$
wqo does imply that the upper topologies of $\Pow Q$ with respect to both
$\preceq^\flat$ and $\preceq^\sharp$ are Noetherian. Frittaion, Hendtlass,
Marcone, Shafer, and Van der Meeren \cite{Noeth} studied these results from
the viewpoint of reverse mathematics, providing along the way proofs that
have a completely different flavor from the category-theoretic arguments used
by Goubault-Larrecq.

Before describing the results from \cite{Noeth} we need to explain the
set-up, which in this case is not obvious because it is necessary to
formalize statements about topological spaces which do not fit in the
frameworks usually considered in subsystems of second-order arithmetic. (If
$Q$ is not an antichain then the Alexandroff and upper topologies are not
$T_1$ and are thus very different from complete separable metric spaces.)
First notice that if the quasi-order $Q$ is countable the Alexandroff and
upper topology can be defined in \RCA\ within the framework of countable
second-countable spaces introduced by Dorais \cite{Dorais}. Expressing the
fact that a countable second-countable space is Noetherian, as well as the
connection mentioned above between $Q$ wqo and the fact that these topologies
are Noetherian are also straightforward in \RCA. However this still does not
suffice to tackle all of Goubault-Larrecq's results, because some of them
deal with topologies defined on the uncountable space $\Pow Q$. To express
that the upper topology of $\Pow Q$ with respect to either $\preceq^\flat$ or
$\preceq^\sharp$ is Noetherian, the authors of \cite{Noeth} devise a way of
representing these topological spaces. This representation shares some
features with other well-established representations of topological spaces,
including the familiar separable complete metric spaces and the countably
based MF spaces introduced by Mummert \cite{Mummert}. In this set-up the main
results are the following (\cite[Theorem 4.7]{Noeth}).

\begin{theorem}[\RCA]\label{Noeth}
The following are equivalent:
\begin{enumerate}[\quad (i)]
  \item \ACA;
  \item if $Q$ is wqo, then the Alexandroff topology of $\Pf Q$ with
      respect to $\preceq^\flat$ is Noetherian;
  \item if $Q$ is wqo, then the upper topology of $\Pf Q$ with respect to
      $\preceq^\flat$ is Noetherian;
  \item if $Q$ is wqo, then the upper topology of $\Pf Q$ with respect to
      $\preceq^\sharp$ is Noetherian;
  \item if $Q$ is wqo, then the upper topology of $\Pow Q$ with respect to
      $\preceq^\flat$ is Noetherian;
  \item if $Q$ is wqo, then the upper topology of $\Pow Q$ with respect to
      $\preceq^\sharp$ is Noetherian.
\end{enumerate}\end{theorem}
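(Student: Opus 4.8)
The plan is to establish a cycle of implications. First I would prove the reversals, i.e.\ that any one of the statements \rr{ii}--\rr{vi} implies \ACA\ over \RCA. The natural route is to reuse the construction that shows \rr{iii} of Theorem \ref{ACAHig} implies \ACA\ in \RCA\ (from \cite[Theorem 2.5]{Noeth}): there one builds, from an instance of an arithmetical comprehension failure, a wqo $Q$ for which $(\Pf Q,{\preceq^\flat})$ fails to be wqo, and the witnessing strictly descending sequence of $\preceq^\flat$-initial intervals is exactly a strictly descending sequence of closed sets in the Alexandroff topology. Since the Alexandroff topology refines the upper topology, and since a strictly descending sequence of closed sets in the coarser (upper) topology need not come for free, I would take care to arrange that the constructed counterexample gives a descending sequence already in the upper topology; the same construction, being built from finite sets whose $\preceq^\flat$-downward closures are genuinely different basic closed sets, should deliver this. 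For the $\preceq^\sharp$ variants one dualizes, mapping each finite set to the corresponding $\preceq^\sharp$-closed set. Thus a single bootstrapping argument handles all five reversals simultaneously.

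Next I would prove \rr{i} $\Rightarrow$ all the rest, working in \ACA. The key point is that \ACA\ proves Higman's theorem (Theorem \ref{ACAHig}) and closure of wqos under product and finite unions, and, crucially, that in \ACA\ one can form $\base(B)$ and similar sets, so that the passage from a descending chain of closed sets to a bad sequence can be effectuated. The cleanest organization is: (a) show \rr{i} implies \rr{ii} directly, since the closed sets of the Alexandroff topology on $(\Pf Q,{\preceq^\flat})$ are its $\preceq^\flat$-initial intervals, and a strictly descending $\omega$-sequence of these yields, via the argument proving \rr{iii}$\Rightarrow$\rr{i} of the Lemma after \ref{charactwqo} (taking least elements of successive differences, using \ACA\ to form them), a bad sequence in $(\Pf Q,{\preceq^\flat})$, contradicting Theorem \ref{ACAHig}\rr{iii}; (b) observe that for any quasi-order the upper topology is coarser than the Alexandroff topology, so a descending chain of upper-closed sets is a descending chain of Alexandroff-closed sets, giving \rr{ii}$\Rightarrow$\rr{iii} trivially, and more importantly reducing \rr{v} to the statement that $(\Pow Q,{\preceq^\flat})$ restricted to the finite sets controls the upper topology; (c) for the $\preceq^\sharp$ statements, use the order-reversing correspondence $X \mapsto \{x : \exists\, y \in X\ x \preceq y\}$ together with the fact that \ACA\ proves $(\Pf Q,{\preceq^\sharp})$ and $(\Pow Q,{\preceq^\sharp})$ behave well when $Q$ is wqo (Theorem \ref{RCAbqoflatsharp} gives the bqo version; for wqo one argues directly that a basic closed set in the upper topology is determined by a finite set and invoke Higman).

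The step I expect to be the main obstacle is handling the \emph{uncountable} spaces $\Pow Q$ in items \rr{v} and \rr{vi}: here one cannot literally quantify over all subsets of $Q$, so one must work through the representation of these upper topologies devised in \cite{Noeth}, in which closed sets are coded by (sequences of) finite data. The real content is to verify, inside \ACA, that every strictly descending sequence of coded closed sets of $(\Pow Q,{\preceq^\flat})$ (resp.\ $\preceq^\sharp$) can be refined to, or directly produces, a bad sequence of finite subsets of $Q$ under $\preceq^\flat$ (resp.\ $\preceq^\sharp$) — at which point Theorem \ref{ACAHig} (for $\preceq^\flat$) or the elementary finite-support argument together with Higman's theorem (for $\preceq^\sharp$) closes the loop. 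This refinement step is where the representation's bookkeeping must be unwound carefully; everything else is either a routine topological comparison (upper coarser than Alexandroff) or a citation of already-established results in the excerpt.
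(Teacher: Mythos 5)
Your overall architecture (reversals via a counterexample wqo, forward directions inside \ACA) is reasonable, and indeed the survey gives no proof of this theorem but only cites \cite[Theorem 4.7]{Noeth}; still, your proposal has two genuine gaps, and they sit exactly where the mathematical content of that theorem lies. The first concerns the reversals for the upper-topology items \rr{iii}--\rr{vi}. A bad sequence in $(\Pf Q,{\preceq^\flat})$ does witness the failure of \rr{ii}, because the Alexandroff closed sets are precisely the initial intervals and non-wqo-ness is non-Noetherianness there; but it does \emph{not} witness failure of the upper topology being Noetherian. An infinite antichain has a Noetherian upper topology (its proper closed sets are exactly the finite sets), so a bad sequence that happens to be an antichain yields nothing. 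To refute Noetherianness of the upper topology one needs an explicit strictly descending chain of upper-closed sets --- in practice an infinite strictly $\preceq^\flat$-descending sequence $F_0 \succ^\flat F_1 \succ^\flat \cdots$ of finite sets, so that the basic closed sets $\set{X}{X \preceq^\flat F_n}$ descend strictly. Whether the construction behind \cite[Theorem 2.5]{Noeth} produces such a descending sequence rather than merely a bad one is precisely what must be verified; your \lq\lq the same construction should deliver this\rq\rq\ is the whole point at issue, and it is the reason the upper-topology statements required new reversal constructions in \cite{Noeth} instead of a citation of the $\preceq^\flat$-wqo reversal.

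The second gap is in the forward directions for \rr{iv} and \rr{vi}. Your plan routes these through wqo- or bqo-ness of the derived quasi-order (\lq\lq invoke Higman\rq\rq, Theorem \ref{RCAbqoflatsharp}), but, as stated at the start of Section \ref{sec:topo}, for $Q$ wqo neither $(\Pow Q, {\preceq^\flat})$ nor $(\Pf Q, {\preceq^\sharp})$ is wqo in general; that is exactly why Goubault-Larrecq's theorem is interesting. So the route \lq\lq wqo $\Rightarrow$ Alexandroff Noetherian $\Rightarrow$ upper Noetherian\rq\rq\ is unavailable for the $\preceq^\sharp$ items, and Theorem \ref{RCAbqoflatsharp} has hypothesis $Q$ bqo, which you are not given. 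Your reduction of $\Pow Q$ to $\Pf Q$ does work for $\preceq^\flat$, since $F \subseteq X$ implies $F \preceq^\flat X$ and hence each witness of strict descent of a chain of downward-closed sets can be replaced by a finite subset of itself; but for $\preceq^\sharp$ the inclusion points the other way ($F \subseteq X$ gives $X \preceq^\sharp F$), so this trace argument fails and \rr{vi} needs its own treatment inside the representation of the closed sets of $\Pow Q$. In short, both halves of the theorem require a direct analysis of descending chains of upper-closed sets; neither half follows from the wqo and bqo results quoted in the survey.
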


In \cite[Section 9.7]{GLbook} Goubault-Larrecq supports his claim that
Noetherian spaces can be thought of as topological versions of wqos, by
proving the following results. Starting from a topological space $X$ he
introduces topologies on $X^{<\N}$ and $\mathcal{T}^X$ and proves the
topological versions of Higman's and Kruskal's theorems, stating that if $X$
is Noetherian then $X^{<\N}$ and $\mathcal{T}^X$ are Noetherian. If $X$ is a
countable second-countable space then so are $X^{<\N}$ and $\mathcal{T}^X$,
which leads to the following so far unexplored question.

\begin{question}
What is the strength of the topological versions of Higman's and Kruskal's
theorems restricted to countable second-countable spaces?
\end{question}

\newcommand{\etalchar}[1]{$^{#1}$}

\end{document}